\newtheorem{theorem}{Theorem}
\newtheorem{corollary}[theorem]{Corollary}
\newtheorem{definition}[theorem]{Definition}
\newtheorem{lemma}[theorem]{Lemma}
\newtheorem{proposition}[theorem]{Proposition}
\newtheorem{remark}[theorem]{Remark}
\newenvironment{proof}[1][Proof]{\noindent\textbf{#1.} }{\ \rule{0.5em}{0.5em}}
\begin{document}

\title{On the Cauchy problem for integro-differential operators in Sobolev
classes and the martingale problem}
\author{R. Mikulevicius and H. Pragarauskas \\
University of Southern California, Los Angeles\\
Institute of Mathematics and Informatics, Vilnius }
\maketitle

\begin{abstract}
The existence and uniqueness in Sobolev spaces of solutions of the Cauchy
problem to parabolic integro-differential equation of the order $\alpha \in
(0,2)$ is investigated. The principal part of the operator has kernel $%
m(t,x,y)/|y|^{d+\alpha }$ with a bounded nondegenerate $m,$ H\"{o}lder in $x$
and measurable in $y.$ The lower order part has bounded and measurable
coefficients. The result is applied to prove the existence and uniqueness of
the corresponding martingale problem.

MSC classes: 45K05, 60J75, 35B65

Key words and phrases: non-local parabolic equations, Sobolev spaces, L\'{e}%
vy processes, martingale problem.
\end{abstract}

\section{Introduction}

In this paper we consider the Cauchy problem%
\begin{eqnarray}
\partial _{t}u(t,x) &=&Lu(t,x)+f(t,x),(t,x)\in E=[0,T]\times \mathbf{R}^{d},
\label{intr1} \\
u(0,x) &=&0  \notag
\end{eqnarray}%
in fractional Sobolev spaces for a class of integrodifferential operators $%
L=A+B$ of the order $\alpha \in (0,2)$ whose principal part $A$ is of the
form%
\begin{eqnarray}
Av(t,x) &=&A_{t,x}v(x)=A_{t,z}v(x)|_{z=x},  \label{0} \\
A_{t,z}v(x) &=&\int \left[ v(x+y)-v(x)-\chi _{\alpha }(y)(\nabla v(x),y)%
\right] m(t,z,y)\frac{dy}{|y|^{d+\alpha }},  \notag
\end{eqnarray}%
$(t,z)\in E,x\in \mathbf{R}^{d},$ with $\chi _{\alpha }(y)=1_{\alpha
>1}+1_{\alpha =1}1_{\left\{ |y|\leq 1\right\} }$. We notice that the
operator $A$ is the generator of an $\alpha $-stable process. If $m=1,$then $%
A=c\left( -\Delta \right) ^{\alpha /2}$ (fractional Laplacian) is the
generator of a spherically symmetric $\alpha $-stable process. The part $B$
is a perturbing, subordinated operator.

In \cite{MiP922}, the problem was considered assuming that $m$ is Holder
continuous in $x$, homogeneous of order zero and smooth in $y$ and for some $%
\eta >0$ {\ 
\begin{equation}
\int_{S^{d-1}}|(w,\xi )|^{\alpha }m(t,x,w)\mu _{d-1}(dw)\geq \eta ,\quad
(t,x)\in E,|\xi |=1,  \label{1}
\end{equation}%
where }$\mu _{d-1}$ is the Lebesgue measure on the unit sphere $S^{d-1}$ in $%
\mathbf{R}^{d}$. In \cite{AbK09}, the existence and uniqueness of a solution
to (\ref{intr1}) in H\"{o}lder spaces was proved analytically for $m$ H\"{o}%
lder continuous in $x$, smooth in $y$ and such that 
\begin{equation}
K\geq m\geq \eta >0  \label{2}
\end{equation}%
without assumption of homogeneity in $y$. \ The elliptic problem $(L-\lambda
)u=f$ \ with $B=0$ and $m$ independent of $x$ in $\mathbf{R}^{d}$ was
considered in \cite{KimDong}$.$ The equation (\ref{intr1}) with $\alpha =1$
can be regarded as a linearization of the quasigeostrophic equation (see 
\cite{cav1}).

In this note, we consider he problem (\ref{intr1}), assuming that $m$ is
measurable, Holder continuous in $x$ and 
\begin{equation}
K\geq m\geq m_{0},  \label{3}
\end{equation}%
where the function $m_{0}=m_{0}(t,x,y)$ is smooth and homogeneous in $y$ and
satisfies (\ref{1}). So, the density $m$ can degenerate on a substantial set.

A certain aspect of the problem is that the symbol of the operator $A,$%
\begin{equation*}
\psi (t,x,\xi )=\int \left[ e^{i(\xi ,y)}-1-\chi _{\alpha }(y)i(\xi ,y)%
\right] m(t,x,y)\frac{dy}{|y|^{d+\alpha }}
\end{equation*}%
is not smooth in $\xi $ and the standard Fourier multiplier results (for
example, used in \cite{MiP922}) do not apply in this case. We start with
equation (\ref{intr1}) assuming that $B=0$, the input function $f$ is smooth
and the function $m=m(t,y)$ does not depend on $x$. In \cite{mikprag}, the
existence and uniqueness of a weak solution in Sobolev spaces was derived.
In this paper we show that the main part $A:H_{p}^{\alpha }\rightarrow L_{p}$
is bounded. Contrary to \cite{KimDong}, where H\"{o}lder estimates were
used, we give a direct proof based on the classical theory of singular
integrals (see Lemmas \ref{lem1}, \ref{lem2} below). The case of variable
coefficients is based on the a priori estimates using Sobolev embedding
theorem and the method in \cite{kry}.

As an application, we consider the martingale problem associated to $L$.
Since the lower part of $L$ has only measurable coefficients, we generalize
the results in \cite{mikprag1}.

The note is organized as follows. In Section 2, the main theorem is stated.
In Section 3, the essential technical results are presented. The main
theorem is proved in Section 4. In Section 5 we discuss the embedding of the
solution space. In Section 6 the existence and uniqueness of the associated
martingale problem is considered.

\section{Notation and main results}

Denote $E=[0,T]\times \mathbf{R}^{d}$, $\mathbf{N}=\{0,1,2,\ldots \}$, $%
\mathbf{R}_{0}^{d}=\mathbf{R}^{d}\backslash \{0\}$. If $x,y\in \mathbf{R}%
^{d} $, we write 
\begin{equation*}
(x,y)=\sum_{i=1}^{d}x_{i}y_{i},|x|=(x,x)^{1/2}.
\end{equation*}%
For a function $u=u(t,x)$ on $E$, we denote its partial derivatives by $%
\partial _{t}u=\partial u/\partial t,\partial _{i}u=\partial u/\partial
x_{i},\partial _{ij}^{2}u=\partial ^{2}u/\partial x_{i}\partial x_{j}$ and $%
D^{\gamma }u=\partial ^{|\gamma |}u/\partial x_{i}^{\gamma _{1}}\ldots
\partial x_{d}^{\gamma _{d}},$ where multiindex $\gamma =(\gamma _{1},\ldots
,\gamma _{d})\in \mathbf{N}^{d},\nabla u=(\partial _{1}u,\ldots ,\partial
_{d}u)$ denotes the gradient of $u$ with respect to $x$.

Let $L_{p}(E)$ be the space of $p$-integrable functions with norm%
\begin{equation*}
|f|_{p}=\left( \int_{0}^{T}\int |f(t,x)|^{p}dxdt\right) ^{1/p}.
\end{equation*}%
Similar space of functions on $\mathbf{R}^{d}$ is denoted $L_{p}(\mathbf{R}%
^{d}).$

Let $\mathcal{S}(\mathbf{R}^{d})$ be the Schwartz space of smooth
real-valued rapidly decreasing functions. We introduce the Sobolev space $%
H_{p}^{{\scriptsize \beta }}(\mathbf{R}^{d})$ of $f\in \mathcal{S}^{\prime }(%
\mathbf{R}^{d})$ with finite norm%
\begin{equation*}
|f|_{\beta ,p}=|\mathcal{F}^{-1}((1+|\xi |^{2})^{{\scriptsize \beta /2}}%
\mathcal{F}f)|_{p}.
\end{equation*}%
where $\mathcal{F}$ denotes the Fourier transform. We also introduce the
corresponding spaces of generalized functions on $E=[0,T]\times \mathbf{R}%
^{d}$: $H_{p}^{\beta }(E)$ consist of all measurable $S^{\prime }(\mathbf{R}%
^{d})$-valued functions $f$ on $[0,T]$ with finite norm 
\begin{equation*}
|f|_{\beta ,p}=\left\{ \int_{0}^{T}|f(t)|_{\beta ,p}^{p}dt\right\} ^{\frac{1%
}{p}}.
\end{equation*}

For $\alpha \in (0,2)$ and $u\in \mathcal{S}(\mathbf{R}^{d}),$ we define the
fractional Laplacian%
\begin{equation}
\partial ^{\alpha }u(x)=\int \nabla _{y}^{\alpha }u(x)\frac{dy}{%
|y|^{d+\alpha }},  \label{fo4}
\end{equation}%
where 
\begin{equation*}
\nabla _{y}^{\alpha }u(x)=u(x+y)-u(x)-\left( \nabla u(x),y\right) \chi
_{\alpha }(y)
\end{equation*}%
with $\chi ^{(\alpha )}(y)=\mathbf{1}_{\{|y|\leq 1\}}\mathbf{1}_{\{\alpha
=1\}}+\mathbf{1}_{\{\alpha \in (1,2)\}}$.

We denote $C_{b}^{\infty }(E)$ the space of bounded infinitely
differentiable in $x$ functions whose derivatives are bounded.

$C=C(\cdot ,\ldots ,\cdot )$ denotes constants depending only on quantities
appearing in parentheses. In a given context the same letter is (generally)
used to denote different constants depending on the same set of arguments.

Let $\alpha \in (0,2)$ be fixed. Let $m:E\times \mathbf{R}%
_{0}^{d}\rightarrow \lbrack 0,\infty ),b:E\rightarrow \mathbf{R}^{d}$ be
measurable functions. We also introduce an auxiliary function $%
m_{0}:[0,T]\times \mathbf{R}_{0}^{d}\rightarrow \lbrack 0,\infty )$ and fix
positive constants $K$ and $\eta $. Throughout the paper we assume that the
function $m_{0}$ satisfies the following conditions. \medskip

\noindent \textbf{Assumption} $\mathbf{A}_{0}.$ (i) The function $%
m_{0}=m_{0}(t,y)\geq 0$ is measurable, homogeneous in $y$ with index zero,
differentiable in $y$ up to the order $d_{0}=[\frac{d}{2}]+1$ and%
\begin{equation*}
|D_{y}^{\gamma }m_{0}^{(\alpha )}(t,y)|\leq K
\end{equation*}%
for all $t\in \lbrack 0,T]$, $y\in \mathbf{R}_{0}^{d}$ and multiindices $%
\gamma \in \mathbf{N}_{0}^{d}$ such that $|\gamma |\leq d_{0}$;

(ii) If $\alpha =1$, then for all $t\in \lbrack 0,T]$ 
\begin{equation*}
\int_{S^{d-1}}wm_{0}(t,w)\mu _{d-1}(dw)=0,
\end{equation*}%
where $S^{d-1}$ is the unit sphere in $\mathbf{R}^{d}$ and $\mu _{d-1}$ is
the Lebesgue measure on it;

(iii) For all $t\in \lbrack 0,T]$ 
\begin{equation*}
\inf_{|\xi |=1}\int_{S^{d-1}}|(w,\xi )|^{\alpha }m_{0}(t,w)\mu
_{d-1}(dw)\geq \eta >0.
\end{equation*}

\begin{remark}
\label{r10}\emph{The nondegenerateness assumption $A_{0}$ (iii) holds with
certain $\eta >0$ if, e.g.%
\begin{equation*}
\inf_{t\in \lbrack 0,T],w\in \Gamma }m_{0}(t,w)>0
\end{equation*}%
for a measurable subset $\Gamma \subset S^{d-1}$ of positive Lebesgue
measure. Therefore }$m_{0}$ \emph{can be zero on a substantial set.}
\end{remark}

Further we will use the following assumptions.

\textbf{Assumption }$\mathbf{A.}$ (i) For all $(t,x)\in E,y\in \mathbf{R}%
_{0}^{d}$,%
\begin{equation*}
K\geq m(t,x,y)\geq m_{0}(t,y),
\end{equation*}

where the function $m_{0}$ satisfies Assumption $\mathbf{A}_{0}$;

(ii) There is $\beta \in (0,1)$ and a continuous increasing function $%
w(\delta )$ such that 
\begin{equation*}
|m(t,x,y)-m(t,x^{\prime },y)|\leq w(|x-x^{\prime }|),t\in \lbrack
0,T],x,x^{\prime },y\in \mathbf{R}^{d},
\end{equation*}%
and%
\begin{equation*}
\int_{|y|\leq 1}w(|y|)\frac{dy}{|y|^{d+\beta }}<\infty ,\lim_{\delta
\rightarrow 0}w(\delta )\delta ^{-\beta }=0.
\end{equation*}

(iii) If $\alpha =1$, then for all $(t,x)\in E$ and $r\in (0,1),$%
\begin{equation*}
\int_{r<|y|\leq 1/r}ym(t,x,y)\frac{dy}{|y|^{d+\alpha }}=0.
\end{equation*}

We define the lower order operator $Bu(t,x)=B_{t,z}u(x)|_{z=x},(t,x)\in E,$
with

\begin{eqnarray*}
B_{t,z}u(x) &=&(b(t,z)\nabla u(x))1_{1<\alpha <2}+\int [u(x+y)-u(x) \\
&&-(\nabla u(x),y))1_{|y|\leq 1}1_{1<\alpha <2}]\pi (t,z,dy),
\end{eqnarray*}%
where $\left( \pi (t,z,dy)\right) $ is a measurable family of nonnegative
measures on $\mathbf{R}_{0}^{d}$ and $b(t,z)=\left( b^{i}(t,z)\right)
_{1\leq i\leq d}$ is a measurable function.

We will assume the following assumptions hold.

\textbf{Assumption B. }(i) For all $(t,x)\in E,$%
\begin{equation*}
|b(t,x)|+\int |\upsilon |^{\alpha }\wedge 1\pi (t,x,d\upsilon )\leq K;
\end{equation*}%
(ii) \textbf{\ }%
\begin{equation*}
\lim_{\varepsilon \rightarrow 0}\sup_{t,x}\int_{|\upsilon |\leq \varepsilon
}|\upsilon |^{\alpha }\pi (t,x,d\upsilon )=0;
\end{equation*}

(iii) For each $\varepsilon >0,$%
\begin{equation*}
\int_{0}^{T}\int \pi (t,x,\left\{ |v|>\varepsilon \right\} )dxdt<\infty .
\end{equation*}%
We write%
\begin{eqnarray}
Au(t,x) &=&A_{t}u(x)=A_{t,x}u(x),Bu(t,x)=B_{t}u(x)=B_{t,x}u(x),  \label{for2}
\\
Lu(t,x) &=&L_{t}u(x)=L_{t,x}u(x),L=A+B.  \notag
\end{eqnarray}%
According to Assumptions \textbf{A, B}, the operator $A$ represents the
principal part of $L$ and the operator $B$ is a lower order operator.

We consider the following Cauchy problem%
\begin{eqnarray}
\partial _{t}u(t,x) &=&(L-\lambda )u(t,x)+f(t,x),(t,x)\in H,  \label{eq1} \\
u(0,x) &=&0,x\in \mathbf{R}^{d},  \notag
\end{eqnarray}%
in Sobolev classes $H_{p}^{\alpha }(E)$, where $\lambda \geq 0$ and $f\in
L_{p}(E)$. More precisely, let $\mathcal{H}_{p}^{\alpha }\left( E\right) $
be the space of all functions $u\in H_{p}^{\alpha }(E)$ such that $u\left(
t,x\right) =\int_{0}^{t}F\left( s,x\right) \,ds,0\leq t\leq T,$ with $F\in
H_{p}^{\alpha }\left( E\right) .$ It is a Banach space with respect to the
norm 
\begin{equation*}
||u||_{\alpha ,p}=\left\vert u\right\vert _{\alpha ,p}+\left\vert
F\right\vert _{p}.
\end{equation*}

\begin{definition}
Let $f\in L_{p}(E).$ We say that $u\in \mathcal{H}_{p}^{\alpha }(E)$ is a
solution to \emph{(\ref{eq1})} if $Lu\in L_{p}(E)$ and 
\begin{equation}
{u}(t)=\int_{0}^{t}\bigl((L-\lambda )u(s)+f(s)\bigr)dt,0\leq t\leq T,
\label{defs}
\end{equation}%
in $L_{p}(\mathbf{R}^{d}).$
\end{definition}

If Assumptions \textbf{A} and \textbf{B} are satisfied, $p>\frac{d}{\alpha }%
\vee \frac{d}{\beta }\vee 2$, then $Lu\in L_{p}(E)$ (see Corollary \ref{cor4}
below and Lemma 7 in \cite{MiP922}). So, (\ref{defs}) is well defined.

The main result of the paper is the following theorem.

\begin{theorem}
\label{main}Let $\beta \in (0,1),p>\frac{d}{\beta },p\geq 2,$ and Assumption 
\textbf{A }be satisfied.

Then for any $f\in L_{p}(E)$ there exists a unique strong solution $u\in
H_{p}^{\alpha }(E)$ to (\ref{eq1}) with $B=0$. Moreover, there is a constant 
$N=N(T,\alpha ,\beta ,d,K,w,\eta )$ and a positive number $\lambda
_{1}=\lambda _{1}(T,\alpha ,\beta ,d,K,w,\eta )\geq 1$ such that%
\begin{eqnarray*}
|\partial _{t}u|_{p}+|u|_{\alpha ,p} &\leqslant &N|f|_{p}, \\
|u|_{p} &\leq &\frac{N}{\lambda }|f|_{p}\text{ if }\lambda \geq \lambda _{1}.
\end{eqnarray*}
\end{theorem}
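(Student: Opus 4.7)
The plan is to combine two ingredients foreshadowed in the introduction: the $x$-independent solvability result of \cite{mikprag} and a Krylov-type freezing-of-coefficients argument driven by the Sobolev embedding theorem. First I would record the building blocks. By \cite{mikprag}, for every fixed $z\in \mathbf{R}^{d}$ the Cauchy problem $\partial _{t}v=(A_{t,z}-\lambda )v+g$, $v(0)=0$, is uniquely solvable in $\mathcal{H}_{p}^{\alpha }(E)$ for every $g\in L_{p}(E)$, with a bound $|\partial_{t}v|_{p}+|v|_{\alpha,p}\leq N_{0}|g|_{p}$ whose constant $N_{0}$ depends only on $T,\alpha,d,K,\eta,p$ and is therefore \emph{uniform in} $z$. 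Together with the boundedness $A_{t,z}:H_{p}^{\alpha }(\mathbf{R}^{d})\rightarrow L_{p}(\mathbf{R}^{d})$ supplied by Lemmas~\ref{lem1} and~\ref{lem2}, this provides the frozen-coefficient input of a perturbation argument.

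Next I would derive the a priori estimate. Decompose
\begin{equation*}
A_{t,x}u(x)=A_{t,z}u(x)+R_{z}u(x),\quad R_{z}u(x)=\int \nabla _{y}^{\alpha}u(x)\,\frac{m(t,x,y)-m(t,z,y)}{|y|^{d+\alpha }}\,dy,
\end{equation*}
introduce a smooth partition of unity $\{\zeta _{\nu }\}$ subordinate to a cover of $\mathbf{R}^{d}$ by balls of radius $\delta $ with centers $z_{\nu }$, and apply the frozen estimate to each $\zeta _{\nu }u$, which satisfies a Cauchy problem with right-hand side of the form $\zeta _{\nu }f+R_{z_{\nu }}(\zeta _{\nu }u)+C_{\nu }u$, where $C_{\nu }u=A_{t,z_{\nu}}(\zeta _{\nu }u)-\zeta _{\nu }A_{t,z_{\nu }}u$ is a commutator of order strictly less than $\alpha $. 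Summing the frozen estimates in $\ell ^{p}(\nu )$ and using interpolation to absorb the commutator contributions into $|u|_{\alpha ,p}$ (at the cost of taking $\lambda $ large), one reduces everything to controlling $\bigl(\sum _{\nu }|R_{z_{\nu }}(\zeta _{\nu }u)|_{p}^{p}\bigr)^{1/p}$.

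The hard part is this remainder bound: the goal is to show that $\bigl(\sum _{\nu }|R_{z_{\nu }}(\zeta _{\nu }u)|_{p}^{p}\bigr)^{1/p}\leq \varepsilon (\delta )\,|u|_{\alpha ,p}$ with $\varepsilon (\delta )\rightarrow 0$ as $\delta \rightarrow 0$. The hypothesis $p>d/\beta $ enters through a Sobolev embedding (to be developed in Section~5) providing H\"{o}lder-type control of $u$ in $x$; the modulus-of-continuity bound $|m(t,x,y)-m(t,z_{\nu },y)|\leq w(\delta )$ on the support of $\zeta _{\nu }$, combined with $\int _{|y|\leq 1}w(|y|)|y|^{-d-\beta }dy<\infty $ and $w(\delta )\delta ^{-\beta }\rightarrow 0$, then produces the claimed smallness in $L_{p}$. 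Choosing $\delta $ small enough to absorb this contribution into the left-hand side closes the a priori estimate $|\partial _{t}u|_{p}+|u|_{\alpha ,p}\leq N|f|_{p}$.

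With the a priori estimate in hand, existence follows by the method of continuity, interpolating between $A$ and a known-solvable operator such as $-(-\Delta )^{\alpha /2}$ through the convex family of operators satisfying Assumption~$\mathbf{A}$ with the same constants; the a priori bound transports solvability along the homotopy. Uniqueness is immediate on applying the estimate to the difference of two solutions. Finally the $L_{p}$-decay $|u|_{p}\leq (N/\lambda )|f|_{p}$ for $\lambda \geq \lambda _{1}$ comes from the substitution $u=e^{-\lambda t/2}\tilde{u}$, which converts the shift by $\lambda $ into a gain on the right-hand side of the principal estimate.
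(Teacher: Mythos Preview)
Your overall architecture---freeze the coefficient, localize, absorb the remainder, then run the method of continuity between $A$ and $\partial^{\alpha}$---is exactly the paper's strategy (Lemma~\ref{lem3}, Lemma~\ref{lem4}, Corollary~\ref{cor5}, Section~4.1). The discrete partition of unity you propose is a harmless variant of Krylov's continuous trick $|\partial^{\alpha}u|^{p}=\int|\partial^{\alpha}u\,\zeta(\cdot-v)|^{p}\,dv$ used in Lemma~\ref{lem4}.

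There is, however, a genuine gap in your remainder estimate. You assert that the condition $p>d/\beta$ enters through a Sobolev embedding ``to be developed in Section~5'' that gives H\"older control of $u$ in $x$. This is a misidentification. Section~5 concerns the embedding of the solution space $\mathcal{H}_{p}^{\alpha}(E)$ into H\"older classes, which requires $p>2d/\alpha$ and has nothing to do with $\beta$; it plays no role in the proof of Theorem~\ref{main}. More importantly, H\"older control of $u$ would not deliver the bound you need: the remainder $R_{z_{\nu}}$ is an operator of order $\alpha$ with an $x$-dependent kernel, and a pointwise smallness $|m(t,x,y)-m(t,z_{\nu},y)|\leq w(\delta)$ on the support of $\zeta_{\nu}u$ is by itself insufficient, since the crude estimate $|R_{z_{\nu}}v(x)|\leq w(\delta)\int|\nabla_{y}^{\alpha}v(x)|\,|y|^{-d-\alpha}\,dy$ diverges.

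The correct mechanism is Corollary~\ref{cor4} (built on Lemma~\ref{rem5}): for an operator $\mathcal{A}v(x)=\mathcal{A}_{z}v(x)|_{z=x}$ one applies the Sobolev embedding $H_{p}^{\beta}(\mathbf{R}^{d})\hookrightarrow L^{\infty}$ \emph{in the parameter} $z$, obtaining $|\mathcal{A}_{x}v(x)|^{p}\leq C\int[|\mathcal{A}_{z}v(x)|^{p}+|\partial_{z}^{\beta}\mathcal{A}_{z}v(x)|^{p}]\,dz$, and then invokes the $x$-independent bound of Lemma~\ref{lem2} for each fixed $z$. This is precisely where $p>d/\beta$ is used. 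In Lemma~\ref{lem3} the paper applies Corollary~\ref{cor4} to the localized difference kernel $m_{0}(t,z,y)\varphi\bigl((z-x_{0})/(2\varepsilon)\bigr)$ and computes that the resulting constant $K_{\varepsilon}$ is controlled by $w(2\varepsilon)\varepsilon^{-\beta}+\int_{|v|\leq\varepsilon}w(|v|)|v|^{-d-\beta}\,dv\to 0$; your integrability hypotheses on $w$ are exactly what make this work, but they enter through $\partial_{z}^{\beta}m$, not through any H\"older regularity of $u$. Once you reroute the argument through Corollary~\ref{cor4}, the rest of your outline goes through as in the paper.
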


We prove this theorem in Section 3 below.

In order to handle (\ref{eq1}) with the lower order part $Bu$, the following
estimate is needed.

\begin{lemma}
\label{lem0}(see Lemma 3.5 in \cite{MiP922}) Let $p>d/\alpha $. There is a
constant $N_{1}=N_{1}(p,\alpha ,d)$ such that%
\begin{equation*}
\left\vert \sup_{y\neq 0}\frac{|\nabla _{y}^{\alpha }v(\cdot )|}{|y|^{\alpha
}}\right\vert _{p}\leq N_{1}|\partial ^{\alpha }v|_{p},v\in C_{0}^{\infty }(%
\mathbf{R}^{d}).
\end{equation*}
\end{lemma}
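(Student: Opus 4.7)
The plan is to reduce the $L_p$ estimate to the classical $L_p$ boundedness of the Hardy--Littlewood maximal operator $M$ by first proving the stronger pointwise bound
\[
\sup_{y\neq 0}\frac{|\nabla_y^\alpha v(x)|}{|y|^\alpha}\le C\,M(\partial^\alpha v)(x),\qquad v\in C_0^\infty(\mathbf{R}^d).
\]
Once this is available, the lemma follows at once from $|Mg|_p\le C_p|g|_p$ for $p>1$ applied to $g=\partial^\alpha v$; the role of the hypothesis $p>d/\alpha$ is to ensure that the Riesz-potential representation below is meaningful for the relevant class, while the pointwise bound itself only uses $p>1$.

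To establish the pointwise bound I would first invert the fractional Laplacian: for $v\in C_0^\infty$ we have $v=c_{d,\alpha}I_\alpha g$, where $g=\partial^\alpha v$ and $I_\alpha h(x)=\int|x-z|^{\alpha-d}h(z)\,dz$ is the Riesz potential. Substituting this representation into the definition of $\nabla_y^\alpha v(x)$ converts it into a kernel operator
\[
\nabla_y^\alpha v(x)=c_{d,\alpha}\int K_\alpha(x-z,y)\,g(z)\,dz,\qquad K_\alpha(u,y)=|u+y|^{\alpha-d}-|u|^{\alpha-d}-\chi_\alpha(y)\,\bigl(\nabla_u|u|^{\alpha-d},y\bigr).
\]

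The heart of the argument is then the standard near/far split at the scale $|u|\asymp|y|$. On the near part $|u|\le 2|y|$ I would bound each summand of $K_\alpha$ separately by elementary Riesz estimates of the form $\int_{|u|\le 2|y|}|u|^{\alpha-d}|g(x-u)|\,du\le C|y|^\alpha Mg(x)$, together with $|y|\int_{|u|\le 2|y|}|u|^{\alpha-d-1}|g(x-u)|\,du\le C|y|^\alpha Mg(x)$ when the gradient correction is present; both follow from a dyadic decomposition and the definition of $Mg(x)$. On the far part $|u|>2|y|$ I would use a Taylor expansion of $u\mapsto|u|^{\alpha-d}$ of order $[\alpha]+1$: the first $[\alpha]$ terms are exactly cancelled by the $\chi_\alpha$-correction in $K_\alpha$, leaving the remainder estimate $|K_\alpha(u,y)|\le C|y|^{[\alpha]+1}|u|^{\alpha-d-[\alpha]-1}$. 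A second dyadic decomposition on $|u|>2|y|$ then yields the bound $\le C|y|^\alpha Mg(x)$ there as well.

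The main technical obstacle is the case $\alpha=1$, where $\chi_1(y)=\mathbf{1}_{\{|y|\le1\}}$ is discontinuous in $|y|$, so the kernel $K_\alpha(u,y)$ takes two different forms depending on whether $|y|\le 1$ or $|y|>1$, and one must check that the maximal-function bound survives uniformly across the threshold $|y|=1$. I would handle this by splitting accordingly: for $|y|\le 1$ the computation mirrors the $\alpha\in(1,2)$ case with the first-order correction; for $|y|>1$ the correction drops out and $\nabla_y^\alpha v(x)=v(x+y)-v(x)$, reducing the problem to the $\alpha\in(0,1)$ case (and one uses $|y|\ge 1$ to absorb the missing correction into the $|y|^\alpha$ denominator). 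Combining the near and far bounds across all $\alpha\in(0,2)$, dividing by $|y|^\alpha$, taking the supremum in $y$, and applying the maximal inequality gives the conclusion.
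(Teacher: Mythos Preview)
Your near/far decomposition via the Riesz representation $v=cI_\alpha g$, $g=\partial^\alpha v$, is the right skeleton, but the claimed pointwise bound $\sup_{y\neq 0}|\nabla_y^\alpha v(x)|/|y|^\alpha\le C\,Mg(x)$ is false. The far zone and the $|u|^{\alpha-d}$ part of the near zone are indeed controlled by $C\,Mg(x)$ as you say, but the near-zone term carrying the singularity $|u+y|^{\alpha-d}$ is not: after the change of variable $w=u+y$ that singularity sits at the point $x+y$, and the dyadic argument gives only $\int_{|u|\le 2|y|}|u+y|^{\alpha-d}|g(x-u)|\,du\le C|y|^\alpha Mg(x+y)$, so the supremum over $y$ produces $\|Mg\|_\infty$. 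This is a genuine obstruction, not an artefact of the estimate: for $v\in C_0^\infty$ with $v(0)=1$ and $|x|=R$ large one has $v(x)=\nabla v(x)=0$, so choosing $y=-x$ gives $|\nabla_y^\alpha v(x)|/|y|^\alpha= R^{-\alpha}$, while $g=\partial^\alpha v$ decays like $|x|^{-d-\alpha}$ and hence $Mg(x)\sim R^{-d}$; the ratio blows up like $R^{d-\alpha}$ whenever $d>\alpha$.

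The repair is precisely where the hypothesis $p>d/\alpha$ enters---it is essential, not the technicality you suggest. Fix $q\in(d/\alpha,p)$; then $(d-\alpha)q'<d$, and H\"older on the bad piece gives
\[
\int_{|u|\le 2|y|}|u+y|^{\alpha-d}|g(x-u)|\,du
\le \Bigl(\int_{|u|\le 2|y|}|g(x-u)|^{q}\,du\Bigr)^{1/q}\Bigl(\int_{|w|\le 3|y|}|w|^{(\alpha-d)q'}\,dw\Bigr)^{1/q'}
\le C|y|^{\alpha}\bigl(M(|g|^{q})(x)\bigr)^{1/q},
\]
so that $\sup_y|\nabla_y^\alpha v(x)|/|y|^\alpha\le C\bigl[Mg(x)+(M(|g|^q)(x))^{1/q}\bigr]$. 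Taking $L_p$ norms and using the boundedness of $M$ on $L_{p/q}$ (valid since $p/q>1$) yields $\|(M|g|^q)^{1/q}\|_p\le C\|g\|_p$, which finishes the proof. Without $p>d/\alpha$ no such $q$ exists. (The paper itself does not prove the lemma, referring instead to Lemma~3.5 of \cite{MiP922}.)
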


Consider (\ref{eq1}) with $Bv(t,x)=B^{\varepsilon
_{0}}v(t,x)=B_{t,x}^{\varepsilon _{0}}v(x),(t,x)\in E,$ where 
\begin{eqnarray}
B_{t,z}^{\varepsilon _{0}}v(x) &=&(b(t,z)+1_{\alpha \in
(1.2)}\int_{\varepsilon _{0}\leq |y|\leq 1}y\pi (t,z,dy),\nabla v(x))
\label{f0} \\
&&+\int_{|y|\leq \varepsilon _{0}}\nabla _{y}^{\alpha }v(x)\pi
(t,z,dy),(t,z)\in E,x\in \mathbf{R}^{d},  \notag
\end{eqnarray}%
with some $\varepsilon _{0}\in (0,1].$

In the consideration of an associated martingale problem (see Section 5
below) the following statement is used.

\begin{theorem}
\label{main2}Let $\beta \in (0,1),p>\frac{d}{\beta }\vee \frac{d}{\alpha }%
\vee 2$ and Assumption \textbf{A }be satisfied. Let%
\begin{eqnarray*}
|b(t,x)|+\int_{\varepsilon _{0}\leq |y|\leq 1}|y|\pi (t,x,dy) &\leq &K, \\
\int_{|y|\leq \varepsilon _{0}}|y|^{\alpha }\pi (t,x,dy) &\leq &\delta
_{0},(t,x)\in E,
\end{eqnarray*}%
and $\delta _{0}NN_{1}\leq 1/2$, where $N$ is a constant of Theorem \ref%
{main}$.$ Then for any $f\in L_{p}(E)$ there exists a unique solution $u\in 
\mathcal{H}_{p}^{\alpha }(E)$ to (\ref{eq1}) with $B=B^{\varepsilon _{0}}$.
Moreover,%
\begin{eqnarray*}
|\partial _{t}u|_{p}+|u|_{\alpha ,p} &\leqslant &2N|f|_{p}, \\
|u|_{p} &\leq &\frac{2N}{\lambda }|f|_{p}\text{ if }\lambda \geq \lambda
_{1}.
\end{eqnarray*}
\end{theorem}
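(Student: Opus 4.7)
The plan is a Banach fixed-point argument built on Theorem \ref{main}, with $B^{\varepsilon_0}$ treated as a small perturbation of $A$. Let $\mathcal{T}_\lambda : L_p(E) \to \mathcal{H}_p^\alpha(E)$ denote the bounded solution operator furnished by Theorem \ref{main} for the unperturbed equation $\partial_t w = (A-\lambda)w + g$, $w(0)=0$; it satisfies $|\mathcal{T}_\lambda g|_{\alpha,p} + |\partial_t \mathcal{T}_\lambda g|_p \le N|g|_p$ for all $\lambda\ge 0$ and, additionally, $|\mathcal{T}_\lambda g|_p \le (N/\lambda)|g|_p$ for $\lambda\ge\lambda_1$. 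The Cauchy problem \eqref{eq1} with $B = B^{\varepsilon_0}$ is then equivalent to the fixed-point equation $u = \mathcal{T}_\lambda(B^{\varepsilon_0}u + f)$ on $\mathcal{H}_p^\alpha(E)$, and the doubled constants in the conclusion ($2N$ instead of $N$) indicate that I should aim for a contraction of ratio $\le 1/2$.

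The decisive $L_p$-bound on $B^{\varepsilon_0}v$ splits into a jump piece and a drift piece. For the jump piece, the pointwise inequality
\begin{equation*}
\Bigl|\int_{|y|\le\varepsilon_0}\nabla_y^\alpha v(x)\,\pi(t,z,dy)\Bigr|\le\Bigl(\sup_{y\ne 0}\frac{|\nabla_y^\alpha v(x)|}{|y|^\alpha}\Bigr)\int_{|y|\le\varepsilon_0}|y|^\alpha\,\pi(t,z,dy)\le\delta_0\sup_{y\ne 0}\frac{|\nabla_y^\alpha v(x)|}{|y|^\alpha},
\end{equation*}
together with Lemma \ref{lem0} and the observation that $\partial^\alpha$ is a Fourier multiplier of order $\alpha$, delivers the bound $\delta_0 N_1|\partial^\alpha v|_p \le C\delta_0 N_1|v|_{\alpha,p}$. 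Composing with $\mathcal{T}_\lambda$ and invoking the hypothesis $\delta_0 N N_1\le 1/2$ shows that the jump piece alone already contributes an $\le\tfrac12$-contraction on the $|\cdot|_{\alpha,p}$-norm. For the drift piece $(b + 1_{\alpha\in(1,2)}\int_{\varepsilon_0\le|y|\le 1}y\,\pi)\cdot\nabla v$, whose coefficient is bounded by $2K$, I would use the interpolation inequality $|\nabla v|_p\le\epsilon|v|_{\alpha,p}+C_\epsilon|v|_p$ (Sobolev embedding $H_p^\alpha\hookrightarrow W_p^1$ when $\alpha\ge 1$, together with Gagliardo--Nirenberg), so that its contribution after $\mathcal{T}_\lambda$ is at most $2KN\epsilon|v|_{\alpha,p}+2KNC_\epsilon|v|_p$.

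Choosing $\epsilon$ so that $2KN\epsilon\le 1/8$ leaves a residual term $2KNC_\epsilon|v|_p$, which I would absorb by enlarging $\lambda_1$ and using the second bound $|\mathcal{T}_\lambda g|_p\le(N/\lambda)|g|_p$ to iterate the $L_p$-norm of the fixed-point sequence down to order $(N/\lambda)|f|_p$; equivalently, one may contract in the combined norm $|v|_{\alpha,p}+\lambda|v|_p$. This yields a unique $u\in\mathcal{H}_p^\alpha(E)$, and summing the resulting geometric series (with ratio $\le 1/2$) produces the stated bounds with the factor $2N$. The main obstacle is exactly the drift: it carries no explicit smallness parameter analogous to $\delta_0$ and is of order one, so it is not subordinated to $A$ when $\alpha$ is near one. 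The quantitative assumption $\delta_0 N N_1\le 1/2$ is tailored precisely to the jump perturbation, while the freedom to enlarge $\lambda_1$ beyond the threshold provided by Theorem \ref{main} is what renders the bounded drift absorbable without any explicit smallness hypothesis on $b$ or $\pi(|y|>\varepsilon_0)$.
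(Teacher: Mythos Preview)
Your contraction-mapping scheme is a genuinely different route from the paper's. The paper does not iterate a solution operator; instead it first derives an \emph{a priori} estimate for any smooth $u\in\mathfrak{D}_p(E)$ solving \eqref{eq1} with $B=B^{\varepsilon_0}$, by rewriting the equation as $\partial_t u=(A-\lambda)u+(B^{\varepsilon_0}u+f)$, applying the bounds of Theorem~\ref{main} with right-hand side $B^{\varepsilon_0}u+f$, and absorbing $|B^{\varepsilon_0}u|_p$ via Lemma~\ref{lem0} and the smallness $\delta_0 N N_1\le 1/2$. Existence is then obtained by the method of continuity along the path $M_\tau=A+\tau B^{\varepsilon_0}$, $\tau\in[0,1]$, exactly as in the proof of Theorem~\ref{main}. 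Your fixed-point argument and the paper's continuity argument are the two standard ways to pass from an unperturbed solvability result to a perturbed one; the paper's route has the slight advantage that the a priori estimate, once established, automatically covers all $\lambda\ge 0$ via the exponential reweighting $\tilde u=e^{(\lambda_1-\lambda)t}u$ (this is the step \eqref{f3} in the paper), whereas your contraction is set up only for $\lambda$ large and you would still need that same reweighting to finish.

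Two points to tighten. First, your plan to absorb the drift by enlarging $\lambda_1$ gives a weaker conclusion than the one stated: the theorem asserts the second inequality for the \emph{same} $\lambda_1$ produced by Theorem~\ref{main}, and the constant in the first inequality is exactly $2N$, not $2N$ times an interpolation-dependent factor. The paper's proof simply invokes Lemma~\ref{lem0} and writes down \eqref{f2}; it does not introduce any extra interpolation step or adjust $\lambda_1$ for the drift. Second, you omit the passage from large $\lambda$ to all $\lambda\ge 0$: even granting your contraction for $\lambda\ge\lambda_1'$, you must still argue existence and the bound $|\partial_t u|_p+|u|_{\alpha,p}\le 2N|f|_p$ for $0\le\lambda<\lambda_1'$, which is done by the substitution $u\mapsto e^{(\lambda_1'-\lambda)t}u$ as in \eqref{f3}.
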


Finally, the results can be extended to

\begin{theorem}
\label{main3}Let $\beta \in (0,1),p>\frac{d}{\beta }\vee \frac{d}{\alpha }%
\vee 2$ and Assumptions \textbf{A, B }be satisfied.

Then for any $f\in L_{p}(E)$ there exists a unique solution $u\in \mathcal{H}%
_{p}^{\alpha }(E)$ to (\ref{eq1}). Moreover, there is a constant $N_{3}$
independent of $u$ such that%
\begin{equation*}
|\partial _{t}u|_{p}+|u|_{\alpha ,p}\leqslant N_{3}|f|_{p}.
\end{equation*}
\end{theorem}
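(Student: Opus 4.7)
My plan is to deduce Theorem \ref{main3} from Theorem \ref{main2} by absorbing the ``large--jump'' part of $B$ as a lower order, non\-local perturbation. First I would split $B=B^{\varepsilon _{0}}+\hat{B}$ where $B^{\varepsilon _{0}}$ is the operator in (\ref{f0}) and
\[
\hat{B}u(t,x)=\int_{|y|>\varepsilon _{0}}\bigl(u(x+y)-u(x)\bigr)\pi (t,x,dy),
\]
the drift correction $\int_{\varepsilon _{0}\leq |y|\leq 1}y\,\pi (t,x,dy)$ (for $1<\alpha <2$) being absorbed into $B^{\varepsilon _{0}}$. Assumption \textbf{B}(ii) allows me to choose $\varepsilon _{0}\in (0,1]$ so small that $\sup_{t,x}\int_{|y|\leq \varepsilon _{0}}|y|^{\alpha }\pi (t,x,dy)\leq \delta _{0}$ with $\delta _{0}NN_{1}\leq 1/2$, so that Theorem \ref{main2} applies to $A+B^{\varepsilon _{0}}$ and provides the required resolvent estimates with constants $2N$ and $2N/\lambda $.

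Next I would bound $\hat{B}$ as a bounded perturbation from the solution space into $L_{p}(E)$. Assumption \textbf{B}(i) gives the pointwise bound $\pi _{\varepsilon _{0}}(t,x):=\pi (t,x,\{|y|>\varepsilon _{0}\})\leq K/(\varepsilon _{0}^{\alpha }\wedge 1)$, while \textbf{B}(iii) yields $\pi _{\varepsilon _{0}}\in L_{1}(E)$; together, $\pi _{\varepsilon _{0}}\in L_{p}(E)$. Since $p>d/\alpha $, the embedding of the solution space into $L_{\infty }$ established in Section 5 of the paper supplies a constant $C_{e}$, independent of $s\in [0,T]$, such that
\[
\|u\|_{L_{\infty }([0,s]\times \mathbf{R}^{d})}\leq C_{e}\|u\|_{\mathcal{H}_{p}^{\alpha }([0,s]\times \mathbf{R}^{d})}.
\]
Because $|\hat{B}u(s,x)|\leq 2\|u(s,\cdot )\|_{\infty }\pi _{\varepsilon _{0}}(s,x)$, integrating the $p$--th power yields
\[
\int_{0}^{t}\!\int |\hat{B}u(s,x)|^{p}dx\,ds\leq (2C_{e})^{p}\int_{0}^{t}H(s)\,\Psi (s)\,ds,
\]
where $\Psi (s):=\|u\|_{\mathcal{H}_{p}^{\alpha }([0,s]\times \mathbf{R}^{d})}^{p}$ and $H(s):=\int \pi _{\varepsilon _{0}}(s,x)^{p}dx$. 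The pointwise bound on $\pi _{\varepsilon _{0}}$ combined with $\pi _{\varepsilon _{0}}\in L_{1}(E)$ shows $H\in L_{1}([0,T])$.

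For the a priori estimate I would rewrite (\ref{eq1}) with $\lambda \geq \lambda _{1}$ as $(\partial _{t}-(A+B^{\varepsilon _{0}})+\lambda )u=f+\hat{B}u$ and apply Theorem \ref{main2} on each sub--interval $[0,t]$, obtaining
\[
\Psi (t)\leq C_{1}|f|_{p}^{p}+C_{2}\int_{0}^{t}H(s)\,\Psi (s)\,ds.
\]
Gronwall's inequality gives $\Psi (T)\leq C_{1}|f|_{p}^{p}\exp (C_{2}\|H\|_{L_{1}([0,T])})$, whence $|\partial _{t}u|_{p}+|u|_{\alpha ,p}\leq N_{3}|f|_{p}$. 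The same estimate with $f=0$ gives uniqueness. Existence is produced by the method of continuity applied to the family $L_{\theta }=A+B^{\varepsilon _{0}}+\theta \hat{B}$, $\theta \in [0,1]$, starting from the solvability at $\theta =0$ granted by Theorem \ref{main2}, using the above uniform in $\theta $ a priori bound. For general $\lambda \geq 0$ the substitution $v(t,x)=e^{(\lambda _{1}-\lambda )t}u(t,x)$ reduces to $\lambda =\lambda _{1}$ at the cost of a bounded factor $e^{|\lambda -\lambda _{1}|T}$ in the final constant.

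The main obstacle is the uniform--in--$s$ embedding $\mathcal{H}_{p}^{\alpha }([0,s]\times \mathbf{R}^{d})\hookrightarrow L_{\infty }$: under $p>d/\alpha $ one gets spatial Sobolev embedding at each fixed time, but to close the Gronwall argument with $\Psi (s)$ (rather than the instantaneous $\|u(s,\cdot )\|_{\alpha ,p}^{p}$) on the right--hand side one must control $\|u\|_{L_{\infty }([0,s]\times \mathbf{R}^{d})}$ by $\|u\|_{\mathcal{H}_{p}^{\alpha }([0,s]\times \mathbf{R}^{d})}$ with a constant that does not blow up as $s$ varies in $[0,T]$. This is precisely what Section 5 of the paper is devoted to, and without it the $L_{1}$--in--$t$ (but not $L_{\infty }$) nature of $\pi _{\varepsilon _{0}}$ coming from \textbf{B}(iii) prevents a direct absorption of $\hat{B}u$ into the leading term.
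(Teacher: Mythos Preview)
Your overall architecture coincides with the paper's: split $B=B^{\varepsilon_0}+\hat{B}$, choose $\varepsilon_0$ via Assumption \textbf{B}(ii) so that Theorem \ref{main2} applies to $A+B^{\varepsilon_0}$, estimate the large--jump remainder, and finish by continuation along $M_\tau=A+B^{\varepsilon_0}+\tau\hat{B}$, $\tau\in[0,1]$; the reduction from general $\lambda\geq 0$ to $\lambda\geq\lambda_1$ by the substitution $e^{(\lambda_1-\lambda)t}u$ is also the same.

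The gap is in your control of $\hat{B}u$. You rely on the embedding $\mathcal{H}_p^\alpha([0,s]\times\mathbf{R}^d)\hookrightarrow L_\infty([0,s]\times\mathbf{R}^d)$ from Section~5 to set up a Gronwall inequality, but Corollary~\ref{cor7} requires $p>2d/\alpha$, whereas Theorem~\ref{main3} only assumes $p>\frac{d}{\beta}\vee\frac{d}{\alpha}\vee 2$. Under the stated hypothesis the uniform--in--time $L_\infty$ bound you need is not available, and your Gronwall loop cannot be closed. The paper sidesteps this completely: it chooses $\alpha'<\alpha$ with $p>d/\alpha'$ (possible since $p>d/\alpha$), uses the \emph{fixed--time} Sobolev embedding $H_p^{\alpha'}(\mathbf{R}^d)\hookrightarrow L_\infty(\mathbf{R}^d)$ together with $\pi(\cdot,\cdot,\{|y|>\varepsilon_0\})\in L_p(E)$ to obtain $|\hat{B}u|_p\leq C|u|_{\alpha',p}$, and then invokes the interpolation inequality
\[
|u|_{\alpha',p}\leq \kappa\,|u|_{\alpha,p}+C_\kappa\,|u|_p.
\]
With $\kappa$ small the $|u|_{\alpha,p}$ contribution is absorbed directly into the left side, and the residual $|u|_p$ is eliminated by taking $\lambda$ large, using the second estimate of Theorem~\ref{main2}. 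No Gronwall argument and no appeal to Section~5 are needed. If you strengthen the hypothesis to $p>2d/\alpha$ your route would work as an alternative, but as written the invocation of Section~5 under $p>d/\alpha$ is not justified.
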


\section{Auxiliary results}

In this section we present some auxiliary results.

\subsection{Continuity of the principal part}

First we prove the continuity of the operator $A$ in $L_{p}$-norm.

We will use the following equality for Sobolev norm estimates.

\begin{lemma}
\label{r1}$($Lemma 2.1 in \textup{\cite{Kom84}}$)$ For $\delta \in (0,1)$
and $u\in \mathcal{S}(\mathbf{R}^{d})$, 
\begin{equation}
u\left( x+y\right) -u(x)=C\int k^{(\delta )}(y,z)\partial ^{\delta }u(x-z)dz,
\label{22}
\end{equation}%
where the constant $C=C(\delta ,d)$ and 
\begin{equation*}
k^{(\delta )}(z,y)=|z+y|^{-d+\delta }-|z|^{-d+\delta }.
\end{equation*}%
Moreover, there is a constant $C=C(\delta ,d)$ such that for each $y\in 
\mathbf{R}^{d}$ 
\begin{equation*}
\int |k^{(\delta )}(z,y)|dz\leq C|y|^{\delta }.
\end{equation*}
\end{lemma}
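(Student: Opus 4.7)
The plan is to derive the identity from the Riesz potential inversion of the fractional Laplacian, and then prove the kernel estimate by a scaling argument.

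First I would recall that the operator $\partial^{\delta}$ defined by (\ref{fo4}) is, for $\delta\in(0,1)$, a constant multiple of $(-\Delta)^{\delta/2}$, since the Fourier symbol
\begin{equation*}
\int [e^{i(\xi,y)}-1]\frac{dy}{|y|^{d+\delta}} = -c_{d,\delta}|\xi|^{\delta}
\end{equation*}
is a standard computation (the $\chi_{\delta}$ term vanishes for $\delta\in(0,1)$). The Riesz potential $I_{\delta}$ with convolution kernel $\gamma_{d,\delta}|z|^{-d+\delta}$ is the Fourier multiplier $|\xi|^{-\delta}$, and consequently, for $u\in\mathcal{S}(\mathbf{R}^d)$,
\begin{equation*}
u(x) = C\int |z|^{-d+\delta}\partial^{\delta}u(x-z)\,dz,
\end{equation*}
with $C=C(\delta,d)$ depending only on the normalization constants above. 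Applying this identity at the shifted point $x+y$ and performing the change of variables $z\mapsto z+y$ in that integral yields
\begin{equation*}
u(x+y) = C\int |z+y|^{-d+\delta}\partial^{\delta}u(x-z)\,dz,
\end{equation*}
and subtracting the two representations produces (\ref{22}) with the kernel $k^{(\delta)}(z,y)=|z+y|^{-d+\delta}-|z|^{-d+\delta}$. I would verify that the integrals converge absolutely because $u\in\mathcal{S}$ implies enough decay of $\partial^\delta u$ at infinity and boundedness near zero, while $|z|^{-d+\delta}$ is locally integrable and decays at infinity.

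For the $L^1$ bound on the kernel, the key is the scaling symmetry of the expression. Substituting $z=|y|\zeta$ gives
\begin{equation*}
\int |k^{(\delta)}(z,y)|\,dz = |y|^{\delta}\int\bigl||\zeta+e|^{-d+\delta}-|\zeta|^{-d+\delta}\bigr|\,d\zeta,
\end{equation*}
where $e=y/|y|$. By rotational invariance the remaining integral is a constant depending only on $\delta$ and $d$, so I only need to confirm it is finite. Near $\zeta=0$ and near $\zeta=-e$ each of the two terms is locally integrable since $-d+\delta>-d$; at infinity the difference exhibits the cancellation $||\zeta+e|^{-d+\delta}-|\zeta|^{-d+\delta}|=O(|\zeta|^{-d+\delta-1})$, which gives an integrand of order $|\zeta|^{\delta-2}$ after accounting for the volume element, and this is integrable for $\delta<1$.

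The only potential obstacle is keeping track of the normalization constants in step one and ensuring the formal Fourier manipulation is rigorous for $u\in\mathcal{S}$, but both issues are routine: $\partial^{\delta}u\in\mathcal{S}'\cap C^{\infty}$ with polynomial decay, and the Riesz composition $I_\delta\circ(-\Delta)^{\delta/2}=\mathrm{id}$ on $\mathcal{S}$ is classical for $0<\delta<d$. No sharp value of the constant $C$ is needed for the applications, so I would not attempt to compute it explicitly.
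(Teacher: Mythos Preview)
Your argument is correct. The paper does not supply its own proof of this lemma---it is quoted from Komatsu \cite{Kom84}---so there is nothing to compare against beyond confirming soundness: the Riesz-potential inversion $u=CI_{\delta}\partial^{\delta}u$ followed by a shift in the convolution variable is exactly the standard derivation of (\ref{22}), and your scaling reduction plus the near/far split for the $L^{1}$ bound on $k^{(\delta)}$ is the natural way to obtain the estimate. One small remark: the displayed identity (\ref{22}) in the paper writes $k^{(\delta)}(y,z)$ while the definition and all subsequent uses have the arguments as $k^{(\delta)}(z,y)$; your derivation produces the kernel $|z+y|^{-d+\delta}-|z|^{-d+\delta}=k^{(\delta)}(z,y)$, which is the intended one.
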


For $\alpha \in (0,1)$ and a bounded measurable function $m(y)$, set for $%
u\in \mathcal{S}(\mathbf{R}^{d}),x\in \mathbf{R}^{d},$%
\begin{eqnarray*}
\mathcal{L}u(x) &=&\int [u(x+y)-u(x)]m(y)\frac{dy}{|y|^{d+\alpha }} \\
&=&\lim_{\varepsilon \rightarrow 0}\int [u(x+y)-u(x)]m_{\varepsilon }(y)%
\frac{dy}{|y|^{d+\alpha }} \\
&=&\lim_{\varepsilon \rightarrow 0}\int \int k^{(\alpha )}(z,y)\partial
^{\alpha }u(x-z)dzm_{\varepsilon }(y)\frac{dy}{|y|^{d+\alpha }} \\
&=&\lim_{\varepsilon \rightarrow 0}\int \int k^{(\alpha
)}(z,y)m_{\varepsilon }(y)\frac{dy}{|y|^{d+\alpha }}\partial ^{\alpha
}u(x-z)dz,
\end{eqnarray*}%
where%
\begin{eqnarray*}
m_{\varepsilon }(y) &=&\chi _{\left\{ \varepsilon \leq |y|\leq \varepsilon
^{-1}\right\} }m(y), \\
k^{(\alpha )}(x,y) &=&\frac{1}{|x+y|^{d-\alpha }}-\frac{1}{|x|^{d-\alpha }}
\end{eqnarray*}%
(see Lemma \ref{r1}).

For $\varepsilon \in (0,1),u,v\in \mathcal{S}(\mathbf{R}^{d}),$ consider%
\begin{eqnarray}
\mathcal{L}^{\varepsilon }u(x) &=&\mathcal{L}_{\alpha }^{\varepsilon
}u(x)=\int [u(x+y)-u(x)]m_{\varepsilon }(y)\frac{dy}{|y|^{d+\alpha }}
\label{for00} \\
&=&\int k_{\varepsilon }(z)\partial ^{\alpha }u(x-z)dz=\int k_{\varepsilon
}(x-z)\partial ^{\alpha }u(z)dz,  \notag
\end{eqnarray}%
and%
\begin{equation*}
\mathcal{K}^{\varepsilon }v(x)=\int k_{\varepsilon }(z)v(x-z)dz=\int
k_{\varepsilon }(x-z)v(z)dz
\end{equation*}%
where%
\begin{equation*}
k_{\varepsilon }(x)=\int k^{(\alpha )}(x,y)m_{\varepsilon }(y)\frac{dy}{%
|y|^{d+\alpha }},x\in \mathbf{R}^{d}.
\end{equation*}%
To prove the continuity $\mathcal{L}:H_{p}^{\alpha }(\mathbf{R}%
^{d})\rightarrow L_{p}(\mathbf{R}^{d})$, we will show that there is a
constant $C$ independent of $\varepsilon $,$v\in \mathcal{S}(\mathbf{R}^{d})$
such that%
\begin{equation}
|\mathcal{K}^{\varepsilon }v|_{p}\leq C|v|_{p}.  \label{fo1}
\end{equation}%
By \cite{stein}, (\ref{fo1}) will follow provided%
\begin{equation}
|\mathcal{K}^{\varepsilon }v|_{2}\leq C|v|_{2},  \label{fo2}
\end{equation}%
and%
\begin{equation}
\int_{|x|>4|s|}|k_{\varepsilon }(x-s)-k_{\varepsilon }(x)|dx\leq C\text{ for
all }s\in \mathbf{R}^{d}.  \label{fo20}
\end{equation}

\begin{remark}
\label{rem1}For any $t>0$, we have $k^{(\alpha )}(tx,ty)=t^{\alpha
-d}k^{(\alpha )}(x,y).$ Therefore%
\begin{equation*}
k_{\varepsilon }(tx)=t^{-d}\int k^{(\alpha )}(x,y)m_{\varepsilon }(ty)\frac{%
dy}{|y|^{d+\alpha }}=t^{-d}k_{\varepsilon }(t,x)
\end{equation*}%
with%
\begin{equation*}
k_{\varepsilon }(t,x)=\int k^{(\alpha )}(x,y)m_{\varepsilon }(ty)\frac{dy}{%
|y|^{d+\alpha }}.
\end{equation*}%
Note that for $x\neq 0,$%
\begin{eqnarray*}
k_{\varepsilon }(x) &=&k_{\varepsilon }(|x|\hat{x})=|x|^{-d}\int k^{(\alpha
)}(\hat{x},y)m_{\varepsilon }(|x|y)\frac{dy}{|y|^{d+\alpha }} \\
&=&|x|^{-d}k_{\varepsilon }(|x|,\hat{x}),
\end{eqnarray*}%
where $\hat{x}=x/|x|$.
\end{remark}

\begin{lemma}
\label{lem1}Let $\alpha \in (0,1),|m(y)|\leq 1,y\in \mathbf{R}^{d}$. Then
for each $p>1$ there is a constant $C$ independent of $u$ and $\varepsilon $
such that,%
\begin{equation*}
|\mathcal{K}^{\varepsilon }u|_{p}\leq C|u|_{p},u\in L_{p}(\mathbf{R}^{d}).
\end{equation*}
\end{lemma}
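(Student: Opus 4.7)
The plan is to invoke the Calderón--Zygmund criterion cited from Stein, which reduces (\ref{fo1}) to the $L^{2}$-bound (\ref{fo2}) and the Hörmander regularity condition (\ref{fo20}) on the convolution kernel $k_{\varepsilon}$, both with constants independent of $\varepsilon$.

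For (\ref{fo2}) I would use Plancherel. Since the Fourier transform of the Riesz kernel $|x|^{\alpha-d}$ is a constant multiple of $|\xi|^{-\alpha}$, a direct computation gives
\begin{equation*}
\hat{k}_{\varepsilon}(\xi)=c\,|\xi|^{-\alpha}\int\bigl(e^{i(y,\xi)}-1\bigr)m_{\varepsilon}(y)\,\frac{dy}{|y|^{d+\alpha}}.
\end{equation*}
Using $|e^{i(y,\xi)}-1|\le 2\wedge(|y||\xi|)$, $|m_{\varepsilon}|\le 1$, and the rescaling $y\mapsto y/|\xi|$, the second factor is bounded by $C|\xi|^{\alpha}$ (the integral converges absolutely precisely because $\alpha<1$), so $|\hat{k}_{\varepsilon}(\xi)|\le C$ uniformly in $\varepsilon$, and Plancherel yields (\ref{fo2}).

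For the Hörmander condition (\ref{fo20}) I would exploit the scaling in Remark \ref{rem1}, $k_{\varepsilon}(x)=|x|^{-d}k_{\varepsilon}(|x|,\hat{x})$. First I would prove the uniform size bound $|k_{\varepsilon}(|x|,\hat{x})|\le C$ by splitting the defining $w$-integral into three ranges: $|w|\le 1/2$ (Taylor of $|\hat{x}+w|^{\alpha-d}-1$, integrable against $|w|^{-d-\alpha}$ precisely because $\alpha<1$), $1/2<|w|\le 2$ (the singularity at $w=-\hat{x}$ is integrable since $d-\alpha<d$), and $|w|>2$ (integrable $|w|^{-d-\alpha}$ tail). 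This gives the Calderón--Zygmund size estimate $|k_{\varepsilon}(x)|\le C|x|^{-d}$. To control the difference $k_{\varepsilon}(x-s)-k_{\varepsilon}(x)$ for $|x|>4|s|$, I would again split the $y$-integral at $|y|=|x|/4$. In the inner region $|x+y|$ and $|x-s+y|$ are both comparable to $|x|$, and a second-order Taylor expansion of $|\cdot|^{\alpha-d}$ yields $|k^{(\alpha)}(x-s,y)-k^{(\alpha)}(x,y)|\le C|s||y|/|x|^{d-\alpha+2}$, whose integral against $|y|^{-d-\alpha}\,dy$ (using $\alpha<1$ once more) contributes $\lesssim |s|/|x|^{d+1}$; in the outer region I would pass to the scaled variables $w=y/|x|$ and $w=y/|x-s|$ and use a Lipschitz-type bound for $\hat{x}\mapsto k_{\varepsilon}(t,\hat{x})$ established by the same three-region splitting as for the size estimate. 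Integrating the combined bound over $|x|>4|s|$ yields a constant, giving (\ref{fo20}).

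The main obstacle is the regularity estimate in the range $|y|\sim|x|$: the naive pointwise gradient bound $|\nabla_{x}k^{(\alpha)}(x,y)|\lesssim |x+y|^{\alpha-d-1}$ is not locally integrable in $y$ when $\alpha<1$, so one cannot differentiate under the integral and then integrate in $y$. The fix is to pass to the scaled representation $k_{\varepsilon}(x)=|x|^{-d}k_{\varepsilon}(|x|,\hat{x})$ and control the radial and angular variations of $k_{\varepsilon}$ separately; after this reduction the singularity of $|\hat{x}+w|^{\alpha-d}$ at $w=-\hat{x}$ becomes a purely angular one that remains integrable on the sphere.
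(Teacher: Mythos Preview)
Your treatment of the $L^{2}$ bound and of the inner region $|y|\le|x|/4$ is correct and matches the paper's argument (the paper splits at $|y|=|x|/2$ and uses a first-order mean value in $t\in[0,1]$ rather than your double Taylor expansion, but the conclusions are the same).

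The gap is in the outer region. The ``Lipschitz-type bound for $\hat{x}\mapsto k_{\varepsilon}(t,\hat{x})$'' you propose does not hold uniformly in the bounded function $m$: for $m$ equal to the indicator of a small ball of radius $\delta$ centred at a point of the unit sphere, one checks that $k_{\varepsilon}(t,\cdot)$ has Lipschitz constant of order $\delta^{\alpha-1}\to\infty$ as $\delta\to0$ (only a H\"older-$\alpha$ modulus survives uniformly). More seriously, even a H\"older bound in $\hat{x}$ would not suffice, because your two scalings $w=y/|x|$ and $w=y/|x-s|$ insert \emph{different} arguments $|x|w$ and $|x-s|w$ into $m_{\varepsilon}$, and since $m$ is merely bounded measurable you have no control whatsoever over $m_{\varepsilon}(|x|w)-m_{\varepsilon}(|x-s|w)$. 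Your last sentence about the singularity becoming ``a purely angular one that remains integrable on the sphere'' does not resolve this: the $w$-integral is still over $\mathbf{R}^{d}$, and passing to the scaled picture does not make $\nabla_{\hat{x}}|\hat{x}+w|^{\alpha-d}$ integrable in $w$.

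The paper circumvents the obstacle by never seeking a pointwise regularity bound in $\hat{x}$. After the scaling of Remark~\ref{rem1} the parameter in $m_{\varepsilon}$ is the fixed number $|s|$, so no radial variation of $m$ appears; what remains is to estimate $\int_{|x|>4}|k_{\varepsilon}(|s|,x-\hat s)-k_{\varepsilon}(|s|,x)|\,dx$. On the outer region $|y|>|x|/2$ the paper introduces a \emph{second} decomposition: on the set $B_{1}$ where $||y|-|x||\ge 3/2$ one has $|x-t\hat s+y|\ge 1/2$, so the first-order mean value in $t$ is legitimate and yields decay $|x|^{-d-\alpha}$; on the thin shell $B_{2}=\{|x|-\tfrac32\le|y|\le|x|+\tfrac32\}$ the mean value is abandoned and a direct size estimate (with one further split according to whether $|\hat z+y|$ exceeds $|z|^{-\alpha/d}$) gives $|B_{2}|\le C|x|^{-d-\alpha^{2}/d}$, which is just barely integrable over $|x|>4$. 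You need this additional decomposition, or something equivalent, to close the argument.
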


\begin{proof}
It is enough to show that (\ref{fo2}) and (\ref{fo20}) hold. By Lemma 1 of
Chapter 5.1 in \cite{stein}, it follows 
\begin{eqnarray*}
\hat{k}_{\varepsilon }(\xi ) &=&\int e^{-i(x,\xi )}k_{\varepsilon
}(x)dx=C|\xi |^{-\alpha }\int [e^{-i(\xi ,y)}-1]m_{\varepsilon }(y)\frac{dy}{%
|y|^{d+\alpha }} \\
&=&\int [e^{-i(\hat{\xi},y)}-1]m_{\varepsilon }(y/|\xi |)\frac{dy}{%
|y|^{d+\alpha }},
\end{eqnarray*}%
where $\hat{\xi}=\xi /|\xi |$. Therefore, by Parseval's equality, (\ref{fo2}%
) holds for $v\in \mathcal{S}(\mathbf{R}^{d})$. The key estimate is (\ref%
{fo20}). By Remark \ref{rem1}, denoting $\hat{s}=s/|s|,$ we have%
\begin{eqnarray*}
\int_{|x|>4|s|}|k_{\varepsilon }(x-s)-k_{\varepsilon }(x)|dx
&=&\int_{|x|/|s|>4}|k_{\varepsilon }(|s|(\frac{x}{|s|}-\hat{s}%
)-k_{\varepsilon }(|s|\frac{x}{|s|})|dx \\
&=&|s|^{d}\int_{|x|>4}|k_{\varepsilon }(|s|(x-\hat{s}))-k_{\varepsilon
}(|s|x)|dx \\
&=&\int_{|x|>4}|k_{\varepsilon }(|s|,x-\hat{s})-k_{\varepsilon }(|s|,x)|dx
\end{eqnarray*}%
and it is enough to prove that%
\begin{equation}
\int_{|x|>4}|k_{\varepsilon }(|s|,x-\hat{s})-k_{\varepsilon }(|s|,x)|dx\leq M%
\text{ for all }s\in \mathbf{R}^{d},\hat{s}=s/|s|.  \label{for3}
\end{equation}%
We will estimate for $|x|\geq 4,s\in \mathbf{R}^{d},\hat{s}=s/|s|,$ the
difference%
\begin{eqnarray*}
&&|k(|s|,x-\hat{s})-k(|s|,x)| \\
&=&\int [k^{(\alpha )}(x-\hat{s},y)-k^{(\alpha )}(x,y)]m_{\varepsilon }(|s|y)%
\frac{dy}{|y|^{d+\alpha }} \\
&=&\int_{|y|\leq |x|/2}...+\int_{|y|>|x|/2}...=A_{1}+A_{2}.
\end{eqnarray*}

Let%
\begin{equation*}
F(t)=\frac{1}{|x-t\hat{s}+y|^{d-\alpha .}}-\frac{1}{|x-t\hat{s}|^{d-\alpha }}%
,0\leq t\leq 1.
\end{equation*}%
If a segment connecting $x$ and $x-\hat{s}$ does not contain zero, then%
\begin{eqnarray}
&&|k^{(\alpha )}(x-\hat{s},y)-k^{(\alpha )}(x,y)|  \label{for5} \\
&=&|F(1)-F(0)|\leq \int_{0}^{1}|F^{\prime }(t)|dt  \notag
\end{eqnarray}%
with 
\begin{equation*}
F^{\prime }(t)=\left( \alpha -d\right) [-\frac{1}{|x-t\hat{s}+y|^{d-\alpha
+1}}\frac{\left( x-t\hat{s}+y,\hat{s}\right) }{|x-t\hat{s}+y|}+\frac{1}{|x-t%
\hat{s}|^{d-\alpha +1}}\frac{\left( x-t\hat{s},\hat{s}\right) }{|x-t\hat{s}|}%
],
\end{equation*}%
and%
\begin{eqnarray}
|F^{\prime }(t)| &\leq &C\left\vert \frac{1}{|x-t\hat{s}+y|^{d-\alpha +1}}-%
\frac{1}{|x-t\hat{s}|^{d-\alpha +1}}\right\vert  \label{for4} \\
&&+\frac{1}{|x-t\hat{s}|^{d-\alpha +1}}(1+\frac{1}{|x-t\hat{s}|})\left(
|y|\wedge 1\right) .  \notag
\end{eqnarray}

\emph{Estimate of }$A_{1}$. Let $|x|\geq 4,z=x-t\hat{s},t\in \lbrack 0,1],$
and $|y|\leq |x|/2$. In this case, $x+y|\geq |x|-|y|\geq |x|/2\geq 2,$ 
\begin{eqnarray*}
C|x| &\geq &|z+y|\geq x|/4\geq 1, \\
C|x| &\geq &|z|\geq |x|-1\geq 3|x|/4\geq 3
\end{eqnarray*}%
and (\ref{for5}) holds. Since%
\begin{eqnarray*}
&&\int_{|y|\leq |x|/2}[\left\vert \frac{1}{|z+y|^{d-\alpha +1}}-\frac{1}{%
|z|^{d-\alpha +1}}\right\vert \frac{dy}{|y|^{d+\alpha }} \\
&\leq &\frac{1}{|z|^{d+1}}\int_{|y|\leq 2/3}\left\vert \frac{1}{|\hat{z}%
+y|^{d-\alpha +1}}-1\right\vert \frac{dy}{|y|^{d+\alpha }} \\
&\leq &\frac{C}{|x|^{d+1}},
\end{eqnarray*}%
and%
\begin{equation*}
\int_{|y|\leq |x|/2}(|y|\wedge 1)]\frac{dy}{|y|^{d+\alpha }}\leq C,
\end{equation*}%
It follows by (\ref{for4}), that 
\begin{eqnarray*}
|A_{1}| &\leq &C\int_{|y|\leq |x|/2}[\left\vert \frac{1}{|z+y|^{d-\alpha +1}}%
-\frac{1}{|z|^{d-\alpha +1}}\right\vert +\frac{1}{|z|^{d-\alpha +1}}%
(|y|\wedge 1)]\frac{dy}{|y|^{d+\alpha }} \\
&\leq &C[\frac{1}{|x|^{d+1}}+\frac{1}{|x|^{d-\alpha +1}}].
\end{eqnarray*}

\emph{Estimate of }$A_{2}.$ Let$|x|\geq 4,|y|>|x|/2$. In this case we split 
\begin{eqnarray*}
A_{2} &=&\int_{|y|>|x|/2}[k^{(\alpha )}(x-\hat{s},y)-k^{(\alpha
)}(x,y)]m(|s|y)\frac{dy}{|y|^{d+\alpha }} \\
&=&\int_{\left\{ |x|-3/2\geq |y|>|x|/2\right\} \cup \left\{
|y|>|x|+3/2\right\} }...+\int_{\left\{ |x|-3/2\leq |y|\leq |x|+3/2\right\}
}... \\
&=&B_{1}+B_{2}.
\end{eqnarray*}

If $|x|-3/2\geq |y|>|x|/2$ or $|y|>|x|+3/2,$ then we can apply (\ref{for5})
and \ref{for4}). For $z=x-t\hat{s}$ we have $|z+y|\geq \frac{1}{2}$ and 
\begin{equation*}
|F^{\prime }(t)|\leq C[\frac{1}{|z+y|^{d-\alpha +1}}+\frac{1}{|z|^{d-\alpha
+1}}].
\end{equation*}%
Therefore 
\begin{eqnarray*}
|B_{1}| &\leq &C[\int_{\{\frac{|x|}{2}\leq |y|\}}\frac{1}{|z|^{d-\alpha +1}}%
\frac{dy}{|y|^{d+\alpha }}+\int_{\{\frac{|x|}{2}\leq |y|\leq |x|-\frac{3}{2}%
\}}\frac{1}{|z+y|^{d-\alpha +1}}\frac{dy}{|y|^{d+\alpha }} \\
&&+\int_{\{|x|+\frac{3}{2}\leq |y|\}}\frac{1}{|z+y|^{d-\alpha +1}}\frac{dy}{%
|y|^{d+\alpha }}] \\
&=&B_{11}+B_{12}+B_{13}.
\end{eqnarray*}%
Now%
\begin{equation*}
B_{11}=C\int_{\{\frac{|x|}{2}\leq |y|\}}\frac{1}{|z|^{d-\alpha +1}}\frac{dy}{%
|y|^{d+\alpha }}\leq \frac{C}{|x|^{d+1}},
\end{equation*}%
and%
\begin{eqnarray*}
B_{12} &\leq &C|z|^{-d-1}\int_{\{\frac{|x|}{2|z|}\leq |y|\leq (|x|-\frac{3}{2%
})/|z|\}}\frac{1}{|\hat{z}+y|^{d-\alpha +1}}\frac{dy}{|y|^{d+\alpha }}, \\
B_{13} &\leq &C|z|^{-d-1}\int_{\{\frac{|x|}{|z|}+\frac{3}{2|z|}\leq |y|\}}%
\frac{1}{|\hat{z}+y|^{d-\alpha +1}}\frac{dy}{|y|^{d+\alpha }}
\end{eqnarray*}%
with $\hat{z}=z/|z|.$ If $\frac{|x|}{2|z|}\leq |y|\leq (|x|-\frac{3}{2})/|z|$
or $\frac{|x|}{|z|}+\frac{3}{2|z|}\leq |y|$, then $|\hat{z}+y|\geq \frac{1}{%
2|z|}$ and $|y|\geq 1/3$. Therefore%
\begin{eqnarray*}
B_{12} &\leq &C|z|^{-d-1}\int_{\{|\hat{z}+y|\geq \frac{1}{2|z|}\}}\frac{dy}{|%
\hat{z}+y|^{d-\alpha +1}} \\
&\leq &C|z|^{-d-\alpha }\leq C|x|^{-d-\alpha }
\end{eqnarray*}%
and 
\begin{eqnarray*}
&&B_{13}\leq C|z|^{-d-1}\int_{|\hat{z}+y|\geq \frac{1}{2|z|}}\frac{1}{|\hat{z%
}+y|^{d-\alpha +1}}dy \\
&=&C|z|^{-d-\alpha }\leq C|x|^{-d-\alpha }.
\end{eqnarray*}

Now we estimate $B_{2}$. If $|x|-\frac{3}{2}\leq |y|\leq |x|+\frac{3}{2}$,
then we estimate directly. First we have 
\begin{eqnarray*}
&&\int_{\left\{ |x|-\frac{3}{2}\leq |y|\leq |x|+\frac{3}{2}\right\} }\frac{1%
}{|z|^{d-\alpha }}\frac{dy}{|y|^{d+\alpha }} \\
&\leq &C|x|^{\alpha -d}\left\vert \frac{1}{(|x|-\frac{3}{2})^{\alpha }}-%
\frac{1}{(|x|+\frac{3}{2})^{\alpha }}\right\vert \\
&\leq &C|x|^{-d-\alpha }.
\end{eqnarray*}%
Then, for $z=x-t\hat{s}$ with $t\in \lbrack 0,1],$ we have $\frac{2}{3}\leq
1-\frac{1}{|z|}\leq \frac{|x|}{|z|}\leq 1+\frac{1}{|z|}\leq \frac{4}{3}$ and%
\begin{eqnarray*}
&&\int_{\{|x|-\frac{3}{2}\leq |y|\leq |x|+\frac{3}{2}\}}\frac{1}{%
|z+y|^{d-\alpha }}\frac{dy}{|y|^{d+\alpha }} \\
&\leq &|z|^{-d}\int_{\left\{ 1-\frac{5}{2|z|}\leq |y|\leq 1+\frac{5}{2|z|}%
\right\} }\frac{1}{|\hat{z}+y|^{d-\alpha }}\frac{dy}{|y|^{d+\alpha }} \\
&\leq &|z|^{-d}\int_{\left\{ 1-\frac{5}{2|z|}\leq |y|\leq 1+\frac{5}{2|z|},|%
\hat{z}+y|>|z|^{-\alpha /d}\right\} }...+\int_{\left\{ 1-\frac{5}{2|z|}\leq
|y|\leq 1+\frac{5}{2|z|},|\hat{z}+y|\leq |z|^{-\alpha /d}\right\} }... \\
&\leq &C[|z|^{-d}|z|^{\frac{\alpha }{d}(d-\alpha )}\int_{\left\{ 1-\frac{5}{%
2|z|}\leq |y|\leq 1+\frac{5}{2|z|}\right\} }\frac{dy}{|y|^{d+\alpha }} \\
&&+|z|^{-d}\int_{|\hat{z}+y|\leq |z|^{-\alpha /d}}\frac{dy}{|\hat{z}%
+y|^{d-\alpha }}]\leq C|z|^{-\alpha ^{2}/d-d}\leq C|x|^{-\alpha ^{2}/d-d}
\end{eqnarray*}%
with $\hat{z}=z/|z|$. Therefore, 
\begin{equation*}
|B_{2}|\leq C[|x|^{-\alpha ^{2}/d-d}+|x|^{-d-1}].
\end{equation*}

The statement follows.
\end{proof}

For a bounded measurable $m(y),y\in \mathbf{R}^{d},$ and $\alpha \in (0,2)$,
set for $u\in \mathcal{S}(\mathbf{R}^{d}),x\in \mathbf{R}^{d},$%
\begin{equation*}
\mathcal{L}u(x)=\mathcal{L}_{\alpha }u(x)=\int \nabla ^{\alpha }u(x)m(y)%
\frac{dy}{|y|^{d+\alpha }},
\end{equation*}%
where 
\begin{equation*}
\nabla ^{\alpha }u(x)=u(x+y)-u(x)-\chi _{\alpha }(y)\left( \nabla
u(x),y\right)
\end{equation*}%
with $\chi _{\alpha }(y)=1_{\alpha \in (1,2)}+1_{\alpha =1}1_{|y|\leq 1}.$

\begin{lemma}
\label{lem2}Let $|m(y)|\leq K,y\in \mathbf{R}^{d},p>1,$ and $\alpha \in
(0,2).$ Assume%
\begin{equation*}
\int_{r\leq |y|\leq R}ym(y)\frac{dy}{|y|^{d+\alpha }}=0
\end{equation*}%
for any $0<r<R$ if $\alpha =1$. Then there is a constant $C$ such that%
\begin{equation*}
|\mathcal{L}_{\alpha }u|_{p}\leq CK|\partial ^{\alpha }u|_{p},u\in L_{p}(%
\mathbf{R}^{d}).
\end{equation*}
\end{lemma}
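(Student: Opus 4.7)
The plan is to extend the Calderón--Zygmund argument of Lemma \ref{lem1} to the whole range $\alpha \in (0,2)$. As there, I introduce the truncations $m_\varepsilon(y) = \chi_{\{\varepsilon \leq |y| \leq \varepsilon^{-1}\}} m(y)$ and the associated operator $\mathcal{L}^\varepsilon u(x) = \int \nabla^\alpha u(x) m_\varepsilon(y) |y|^{-d-\alpha} dy$, prove $|\mathcal{L}^\varepsilon u|_p \leq CK |\partial^\alpha u|_p$ with a constant independent of $\varepsilon$, and pass to the limit by dominated convergence, justified by Lemma \ref{lem0}. For $\alpha \in (0,1)$ the compensator $\chi_\alpha$ vanishes and the statement reduces directly to Lemma \ref{lem1} applied to $m/K$. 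For $\alpha = 1$ the symmetry hypothesis with $r = \varepsilon$ and $R = 1$ gives $\int_{\varepsilon \leq |y| \leq 1} y\, m(y) |y|^{-d-1} dy = 0$, so the compensator inside $\mathcal{L}^\varepsilon$ drops out and the same argument of Lemma \ref{lem1} applies (its kernel estimates extend to $\alpha = 1$ because the exponents $|x|^{-d-\alpha}$, $|x|^{-d-1}$, $|x|^{-d-\alpha^2/d}$ remain integrable on $\{|x| > 4\}$).

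For $\alpha \in (1,2)$ the compensator cannot be removed, and I use the identity
\[
\nabla^\alpha u(x) = \int_0^1 \bigl[ y \cdot \nabla u(x+ty) - y \cdot \nabla u(x)\bigr] dt
\]
combined with Lemma \ref{r1} applied componentwise with $\delta = \alpha - 1 \in (0,1)$ to $\partial_i u$:
\[
\partial_i u(x+ty) - \partial_i u(x) = C \int k^{(\alpha-1)}(ty, z)\, \partial^{\alpha-1} \partial_i u(x-z)\, dz.
\]
Setting $v_i = \partial^{\alpha-1} \partial_i u$, one has $|v_i|_p \leq C|\partial^\alpha u|_p$ since $v_i$ is a Riesz-type transform of $\partial^\alpha u$, bounded on $L_p$ for $p \in (1,\infty)$. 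Interchanging the order of integration represents $\mathcal{L}^\varepsilon u$ as a finite sum of convolutions
\[
\mathcal{L}^\varepsilon u(x) = \sum_{i=1}^d \int k_{i,\varepsilon}(x-z)\, v_i(z)\, dz, \qquad k_{i,\varepsilon}(z) = C \int y_i \int_0^1 k^{(\alpha-1)}(ty, z)\, dt\, m_\varepsilon(y) \frac{dy}{|y|^{d+\alpha}}.
\]

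It then suffices to verify the Calderón--Zygmund criterion for each $k_{i,\varepsilon}$. The $L_2$-bound follows from Parseval together with the standard symbol estimate $|\int [e^{i(\xi,y)}-1-i(\xi,y)] m_\varepsilon(y) |y|^{-d-\alpha} dy| \leq CK|\xi|^\alpha$. The Hörmander-type cancellation $\int_{|x|>4|s|} |k_{i,\varepsilon}(x-s) - k_{i,\varepsilon}(x)|\, dx \leq C$ is verified by the scaling of Remark \ref{rem1} and the same splitting $|y| \leq |x|/2$ versus $|y| > |x|/2$ as in the proof of Lemma \ref{lem1}. I expect the hardest step to be this last verification for $\alpha \in (1,2)$: the extra factor $y_i$ and the inner $t$-integration make the kernel more singular than in Lemma \ref{lem1}, and one must track the regions in which $|z + ty|$ can be small uniformly in $t$. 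Since the pointwise bounds (\ref{for4})--(\ref{for5}) are uniform in $t \in [0,1]$, the regional analysis of $A_1$, $B_1$, $B_2$ carries through with only additional bookkeeping for the $y_i$ factor.
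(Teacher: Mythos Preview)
Your treatment of $\alpha\in(0,1)$ matches the paper. For the other two ranges your plan diverges from the paper's, and in each case you are making the problem harder than it needs to be; for $\alpha=1$ there is in fact a gap.

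\textbf{Case $\alpha\in(1,2)$.} You start exactly as the paper does, writing $\nabla^{\alpha}u(x)=\int_{0}^{1}(\nabla u(x+sy)-\nabla u(x),y)\,ds$, but then you propose to build kernels $k_{i,\varepsilon}$ that carry the inner $s$-integral and to re-verify the H\"ormander cancellation from scratch. The paper avoids this entirely: after the substitution $sy\mapsto y$ the $s$-integral is absorbed into a new bounded density
\[
M(y)=\int_{0}^{1}m(y/s)\,s^{\alpha-1}\,ds,\qquad |M(y)|\le K/\alpha,
\]
so that
\[
\mathcal{L}_{\alpha}u(x)=\sum_{i}\int\bigl[\partial_{i}u(x+y)-\partial_{i}u(x)\bigr]\,\frac{y_{i}}{|y|}M(y)\,\frac{dy}{|y|^{d+(\alpha-1)}}.
\]
This is precisely an operator of the type treated by Lemma~\ref{lem1} with exponent $\alpha-1\in(0,1)$ and bounded density $\frac{y_{i}}{|y|}M(y)$, applied to $\partial_{i}u$. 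Hence $|\mathcal{L}_{\alpha}u|_{p}\le CK|\partial^{\alpha-1}\nabla u|_{p}\le CK|\partial^{\alpha}u|_{p}$, with no new Calder\'on--Zygmund verification required. Your kernels $k_{i,\varepsilon}$ are in fact equal to the Lemma~\ref{lem1} kernels for this reduced operator after the same change of variable, so the ``hard step'' you anticipate is already done.

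\textbf{Case $\alpha=1$.} Here your argument has a genuine gap. You claim that once the symmetry hypothesis kills the compensator, ``the same argument of Lemma~\ref{lem1} applies.'' But the entire setup of Lemma~\ref{lem1}---writing $\mathcal{L}^{\varepsilon}u=k_{\varepsilon}\ast\partial^{\alpha}u$ via (\ref{for00})---rests on the representation of Lemma~\ref{r1}, which is stated and valid only for $\delta\in(0,1)$. At $\delta=1$ the kernel $k^{(1)}(z,y)=|z+y|^{-d+1}-|z|^{-d+1}$ fails the bound $\int|k^{(\delta)}(z,y)|\,dz\le C|y|^{\delta}$: the integrand behaves like $|y|\,|z|^{-d}$ for large $|z|$, producing a logarithmic divergence (and for $d=1$ the kernel vanishes identically). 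So you cannot simply let $\alpha\to 1$ in Lemma~\ref{lem1}. The paper instead factors through $\delta=1/2$: applying Lemma~\ref{r1} with $\delta=1/2$ gives
\[
\mathcal{L}_{1}u(x)=\int\bigl[\partial^{1/2}u(x+z)-\partial^{1/2}u(x)\bigr]\,M(-z)\,\frac{dz}{|z|^{d+1/2}},
\]
with a new bounded density $M$, and this is an operator of order $1/2$ acting on $\partial^{1/2}u$, so Lemma~\ref{lem1} (with $\alpha=1/2$) finishes the job.
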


\begin{proof}
If $\alpha \in (0,1)$, then for $u\in \mathcal{S}(\mathbf{R}^{d})$ we have 
\begin{equation*}
\mathcal{L}u(x)=\lim_{\varepsilon \rightarrow 0}\mathcal{L}^{\varepsilon
}u(x),x\in \mathbf{R}^{d},
\end{equation*}%
and by Lemma \ref{lem1} there is a constant $C$ independent on $u$ such that%
\begin{equation*}
|K^{-1}\mathcal{L}u|_{p}\leq C|\partial ^{\alpha }u|_{p}
\end{equation*}%
or 
\begin{equation*}
|\mathcal{L}_{\alpha }u|_{p}\leq CK|\partial ^{\alpha }u|_{p},u\in \mathcal{S%
}(\mathbf{R}^{d}).
\end{equation*}

If $\alpha \in (1,2)$, then it follows by Lemma \ref{r1} that for $u\in 
\mathcal{S}(\mathbf{R}^{d}),$ 
\begin{eqnarray*}
\mathcal{L}_{\alpha }u(x) &=&\int \left[ u(x+y)-u(x)-(\nabla u(x),y)\right]
m(y)\frac{dy}{|y|^{d+\alpha }} \\
&=&\int \left[ \int_{0}^{1}\left( \nabla u(x+sy)-\nabla u(x),y\right) \right]
m(y)\frac{dsdy}{|y|^{d+\alpha }} \\
&=&\int \left[ \left( \nabla u(x+y)-\nabla u(x),\frac{y}{|y|}\right) \right]
M(y)\frac{dy}{|y|^{d+\alpha -1}}
\end{eqnarray*}%
with%
\begin{equation*}
M(y)=\int_{0}^{1}m(y/s)s^{-1+\alpha }ds,y\in \mathbf{R}^{d}\text{.}
\end{equation*}%
Therefore, the estimate reduces to the case of $\alpha \in (0,1)$: there is
a constant $C$ independent of $u\in \mathcal{S}(\mathbf{R}^{d})$ such that%
\begin{equation*}
|\mathcal{L}_{\alpha }u|_{p}\leq CK|\partial ^{\alpha -1}\nabla u|_{p}\leq
C|\partial ^{\alpha }u|_{p}.
\end{equation*}

If $\alpha =1$, then for $u\in \mathcal{S}(\mathbf{R}^{d}),$ 
\begin{eqnarray*}
\mathcal{L}_{1}u(x) &=&\int [u(x+y)-u(x)-(\nabla u(x),y)1_{|y|\leq 1}]m(y)%
\frac{dy}{|y|^{d+1}} \\
&=&\lim_{\varepsilon \rightarrow 0}\int [u(x+y)-u(x)]m_{\varepsilon }(y)%
\frac{dy}{|y|^{d+1}}
\end{eqnarray*}%
with $m_{\varepsilon }(y)=m(y)1_{\varepsilon ^{-1}\geq |y|>\varepsilon
},y\in \mathbf{R}^{d}$. By Lemma \ref{r1}, for $u\in \mathcal{S}(\mathbf{R}%
^{d}),x\in \mathbf{R}^{d},$ 
\begin{eqnarray*}
&&\int [u(x+y)-u(x)]m_{\varepsilon }(y)\frac{dy}{|y|^{d+1}} \\
&=&\int \int k^{(1/2)}(z,y)\partial ^{1/2}u(x-z)dzm_{\varepsilon }(y)\frac{dy%
}{|y|^{d+1}} \\
&=&\int \int k^{(1/2)}(z,y)[\partial ^{1/2}u(x-z)-\partial
^{1/2}u(x)]dzm_{\varepsilon }(y)\frac{dy}{|y|^{d+1}}
\end{eqnarray*}%
and%
\begin{eqnarray*}
\mathcal{L}_{1}u(x) &=&\lim_{\varepsilon \rightarrow 0}\int
[u(x+y)-u(x)]m_{\varepsilon }(y)\frac{dy}{|y|^{d+1}} \\
&=&\lim_{\varepsilon \rightarrow 0}\int \int k^{(1/2)}(z,y)m_{\varepsilon
}(y)\frac{dy}{|y|^{d+1}}[\partial ^{1/2}u(x-z)-\partial ^{1/2}u(x)]dz.
\end{eqnarray*}%
Obviously,%
\begin{eqnarray*}
&&\int k^{(1/2)}(z,y)m_{\varepsilon }(y)\frac{dy}{|y|^{d+1}} \\
&=&\frac{1}{|z|^{d+\frac{1}{2}}}\int \left( \frac{1}{|\hat{z}+y|^{d-\frac{1}{%
2}}}-1\right) m_{\varepsilon }(|z|y)\frac{dy}{|y|^{d+1}} \\
&=&\frac{1}{|z|^{d+\frac{1}{2}}}M_{\varepsilon }(z),
\end{eqnarray*}%
where $\hat{z}=z/|z|$. Since for $\varepsilon \in (0,1/2)$, we have $%
|M_{\varepsilon }(z)|\leq CK$ and for $z\in \mathbf{R}^{d},$%
\begin{eqnarray*}
\lim_{\varepsilon \rightarrow 0}M_{\varepsilon }(z) &=&M(z) \\
&=&\int_{\{|y|\leq \frac{1}{2}\}}\left( \frac{1}{|\hat{z}+y|^{d-\frac{1}{2}}}%
-1+(d-\frac{1}{2})(\hat{z},y)\right) m(|z|y)\frac{dy}{|y|^{d+1}} \\
&&+\int_{\{|y|>\frac{1}{2}\}}\left( \frac{1}{|\hat{z}+y|^{d-\frac{1}{2}}}%
-1\right) m(|z|y)\frac{dy}{|y|^{d+1}},
\end{eqnarray*}%
it follows that%
\begin{eqnarray*}
\mathcal{L}^{1}u(x) &=&\int [u(x+y)-u(x)-(\nabla u(x),y)1_{|y|\leq 1}]m(y)%
\frac{dy}{|y|^{d+1}} \\
&=&\int [\partial ^{1/2}u(x+z)-\partial ^{1/2}u(x)]M(-z)\frac{dz}{|z|^{d+%
\frac{1}{2}}}
\end{eqnarray*}%
with $|M(z)|\leq CK,z\in \mathbf{R}^{d}$ and the estimate follows from the
case $\alpha =1/2$.
\end{proof}

Now we investigate the continuity of the main part $A$ with $m$ depending on
the spacial variable. For a bounded measurable $m(x,y),x,y\in \mathbf{R}^{d}$%
, consider the operator $\mathcal{A}u(x)=\mathcal{A}_{z}u(x)|_{z=x},x\in 
\mathbf{R}^{d},$ with $u\in \mathcal{S}(\mathbf{R}^{d})$ and%
\begin{equation}
\mathcal{A}_{z}u(x)=\mathcal{A}_{z}u(x)=\mathcal{A}_{z}^{m}u(x)=\int \nabla
_{y}^{\alpha }u(x)m(z,y)\frac{dy}{|y|^{d+\alpha }},z,x\in \mathbf{R}^{d}.
\label{f1}
\end{equation}

\begin{lemma}
\label{rem5}Assume $\beta \in (0,1),p>d/\beta $. Let for each $y\in \mathbf{R%
}^{d},$ $m(\cdot ,y)\in H_{p}^{\beta }(\mathbf{R}^{d})$ and%
\begin{equation*}
|m(z,y)|+|\partial _{z}^{\beta }m(z,y)|<\infty .
\end{equation*}%
Then%
\begin{equation*}
|\mathcal{A}u|_{p}^{p}\leq C|\partial ^{\alpha }u|_{p}^{p}\int
\sup_{y}[|m(z,y)|^{p}+|\partial ^{\beta }m(z,y)|^{p}]dz.
\end{equation*}
\end{lemma}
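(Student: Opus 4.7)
The plan is to reduce to the frozen-coefficient estimate of Lemma~\ref{lem2} through a Sobolev embedding in the parameter $z$. For fixed $x$, set $g_{x}(z)=\mathcal{A}_{z}u(x)$, so that $\mathcal{A}u(x)=g_{x}(x)$. Since $p\beta>d$, the Sobolev embedding $H_{p}^{\beta}(\mathbf{R}^{d})\hookrightarrow L_{\infty}(\mathbf{R}^{d})$, combined with the standard equivalence $|f|_{\beta,p}\simeq |f|_{p}+|\partial^{\beta}f|_{p}$, yields a constant $C=C(p,\beta,d)$ with
\[
|g_{x}(x)|^{p}\leq C\int\bigl[|g_{x}(z)|^{p}+|\partial_{z}^{\beta}g_{x}(z)|^{p}\bigr]\,dz.
\]
Integrating in $x$ and swapping the order of integration by Fubini gives
\[
|\mathcal{A}u|_{p}^{p}\leq C\int\bigl[|\mathcal{A}_{z}u|_{p}^{p}+|\partial_{z}^{\beta}\mathcal{A}_{z}u|_{p}^{p}\bigr]\,dz,
\]
where the inner $L_{p}$-norms are taken in $x$ alone.

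I would then bound each term under the $z$-integral using Lemma~\ref{lem2}. For fixed $z$, the operator $\mathcal{A}_{z}$ is of the form treated there with $m$ replaced by $m(z,\cdot)$, so
\[
|\mathcal{A}_{z}u|_{p}\leq C\sup_{y}|m(z,y)|\cdot|\partial^{\alpha}u|_{p}.
\]
For the derivative term, I would commute $\partial_{z}^{\beta}$ past the $y$-integral to obtain
\[
\partial_{z}^{\beta}\mathcal{A}_{z}u(x)=\int\nabla_{y}^{\alpha}u(x)\,\partial_{z}^{\beta}m(z,y)\,\frac{dy}{|y|^{d+\alpha}},
\]
which is again of the form covered by Lemma~\ref{lem2}, now with coefficient $\partial_{z}^{\beta}m(z,\cdot)$, yielding
\[
|\partial_{z}^{\beta}\mathcal{A}_{z}u|_{p}\leq C\sup_{y}|\partial_{z}^{\beta}m(z,y)|\cdot|\partial^{\alpha}u|_{p}.
\]
Substituting and using $\sup_{y}|m|^{p}+\sup_{y}|\partial_{z}^{\beta}m|^{p}\leq 2\sup_{y}(|m|^{p}+|\partial_{z}^{\beta}m|^{p})$ produces the claimed estimate.

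The main obstacle is justifying the commutation of the non-local operator $\partial_{z}^{\beta}=(-\Delta_{z})^{\beta/2}$ with the singular integral in $y$, because straightforward differentiation under the integral sign does not apply to a fractional derivative. I would handle it by first approximating $m(\cdot,y)$ by smooth truncations in $z$, invoking the Fourier representation of $\partial^{\beta}$ to obtain the commutation for the approximants, and then passing to the limit by dominated convergence using the hypothesis $|m|+|\partial_{z}^{\beta}m|<\infty$ as a uniform majorant. A secondary point is that, for each fixed $z$, Lemma~\ref{lem2} requires the cancellation condition when $\alpha=1$; this is inherited from the analogous condition on $m$ in the contexts where Lemma~\ref{rem5} is ultimately applied.
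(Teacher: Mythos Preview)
Your proposal is correct and follows essentially the same route as the paper: Sobolev embedding in the parameter $z$ (using $p\beta>d$) to control $\sup_{z}|\mathcal{A}_{z}u(x)|$ by $\int(|\mathcal{A}_{z}u(x)|^{p}+|\partial_{z}^{\beta}\mathcal{A}_{z}u(x)|^{p})\,dz$, then Fubini and Lemma~\ref{lem2} applied for each fixed $z$ to the operators with coefficients $m(z,\cdot)$ and $\partial_{z}^{\beta}m(z,\cdot)$. The paper simply writes the commutation $\partial_{z}^{\beta}\mathcal{A}_{z}u=\mathcal{A}_{z}^{\partial_{z}^{\beta}m}u$ as an identity without further justification, so your discussion of this point and of the $\alpha=1$ cancellation condition is, if anything, more careful than the original.
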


\begin{proof}
By Sobolev embedding theorem, there is a constant $C$ such that%
\begin{eqnarray*}
|\mathcal{A}_{x}u(x)|^{p} &\leq &\sup_{z}|\mathcal{A}_{z}u(x)|^{p}\leq C\int
[|\mathcal{A}_{z}u(x)|^{p}+|\partial _{z}^{\beta }\mathcal{A}_{z}u(x)|^{p}]dz
\\
&=&C\int [|\mathcal{A}_{z}^{m}u(x)|^{p}+|\mathcal{A}_{z}^{\partial
_{z}^{\beta }m}u(x)|^{p}]dz,x\in \mathbf{R}^{d},
\end{eqnarray*}%
and by Lemma \ref{lem2}%
\begin{eqnarray*}
|\mathcal{A}u|_{p}^{p} &\leq &C\int [|\mathcal{A}_{z}u|_{p}^{p}+|\mathcal{A}%
_{z}^{\partial _{z}^{\beta }m}u|_{p}^{p}]dz \\
&\leq &C|\partial ^{\alpha }u|_{p}^{p}\int \sup_{y}[|m(z,y)|^{p}+|\partial
^{\beta }m(z,y)|^{p}]dz.
\end{eqnarray*}
\end{proof}

The following statement holds.

\begin{lemma}
\label{rem6}Let $u\in C_{0}^{\infty }(\mathbf{R}^{d})$ have its support in a
unit ball$.$ Assume $\beta \in (0,1),p>d/\beta $, and 
\begin{equation*}
|m(z,y)|+|\partial _{z}^{\beta }m(z,y)|\leq K,z,y\in \mathbf{R}^{d}.
\end{equation*}%
Then there is a constant $C=C(\alpha ,p,\beta ,d)$ independent of $u$ such
that%
\begin{equation*}
|\mathcal{A}u|_{p}^{p}\leq CK^{p}|u|_{\alpha ,p}^{p}.
\end{equation*}
\end{lemma}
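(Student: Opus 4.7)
The plan is to split $|\mathcal{A}u|_p^p$ into an inner piece over $\{|x|\le 2\}$ and a tail piece over $\{|x|>2\}$, treating the inner piece by localizing $m$ in its $z$-variable and appealing to Lemma \ref{rem5}, and the tail by a direct pointwise kernel estimate. A straight application of Lemma \ref{rem5} is blocked because its hypothesis requires $\int\sup_y[|m|^p+|\partial_z^\beta m|^p]\,dz<\infty$, which fails under only an $L^\infty$ bound; the compact support of $u$ is what lets me localize.

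For the tail $|x|>2$, the support hypothesis forces $u(x)=0$ and $\nabla u(x)=0$, so (\ref{f1}) collapses to $\mathcal{A}u(x)=\int u(x+y)m(x,y)|y|^{-d-\alpha}dy$. The constraint $u(x+y)\neq 0$ forces $|y|\ge|x|-1\ge|x|/2$, hence $|\mathcal{A}u(x)|\le CK|x|^{-d-\alpha}|u|_{L^1}\le CK|x|^{-d-\alpha}|u|_{\alpha,p}$ (by H\"older and the compact support of $u$). Since $p(d+\alpha)>d$, $\int_{|x|>2}|x|^{-p(d+\alpha)}dx<\infty$, producing the tail bound.

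For the inner piece, pick $\zeta\in C_c^\infty(\mathbf{R}^d)$ with $\zeta\equiv 1$ on $B_2$ and $\operatorname{supp}\zeta\subset B_3$, and set $\tilde m(z,y)=\zeta(z)m(z,y)$. From (\ref{f1}) one has $\mathcal{A}^{\tilde m}u(x)=\zeta(x)\mathcal{A}^{m}u(x)$, so the two operators coincide on $B_2$ and it suffices to bound $|\mathcal{A}^{\tilde m}u|_p$ via Lemma \ref{rem5}. This reduces to checking $\int\sup_y[|\tilde m(z,y)|^p+|\partial_z^\beta\tilde m(z,y)|^p]\,dz\le CK^p$. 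The first summand is trivial. For the second I would first invoke Lemma \ref{r1} to upgrade the pointwise bound on $\partial_z^\beta m$ to the H\"older increment $|m(z+h,y)-m(z,y)|\le CK|h|^\beta$, then decompose
\begin{equation*}
\partial_z^\beta(\zeta m)(z,y)=\zeta(z)\partial_z^\beta m(z,y)+m(z,y)\partial_z^\beta\zeta(z)+R(z,y),
\end{equation*}
where $R(z,y)=c\int[\zeta(z+h)-\zeta(z)][m(z+h,y)-m(z,y)]|h|^{-d-\beta}dh$. Each summand is $O(K)$ on a bounded set and decays like $|z|^{-d-\beta}$ at infinity (from the compact support of $\zeta$, combined with $|m|\le K$ for $R$), which is $L^p$-integrable because $p(d+\beta)>d$. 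Lemma \ref{rem5} together with $|\partial^\alpha u|_p\le C|u|_{\alpha,p}$ then gives the core bound.

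The main obstacle is the control of the bilinear residual $R$: converting the pointwise hypothesis on $\partial_z^\beta m$ into a genuine H\"older estimate on $m$ via Lemma \ref{r1} is what makes the singular factor $|h|^{-d-\beta}$ integrable near $h=0$, and the compact support of $\zeta$ is what yields $L^p$ decay at infinity for both $R$ and $\partial_z^\beta\zeta$. Everything else in the argument is bookkeeping.
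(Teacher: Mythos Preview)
Your argument is correct and follows essentially the same route as the paper: localize $m$ in the $z$-variable by a cutoff so that Lemma~\ref{rem5} applies, then show the localized symbol has finite $\int\sup_y[|\cdot|^p+|\partial_z^\beta(\cdot)|^p]\,dz$ via a Leibniz-type expansion of $\partial_z^\beta(\text{cutoff}\cdot m)$. The paper organizes things slightly differently --- it splits in the jump variable $y$ (near/far) rather than in $x$, so that the near-field piece $A_1$ is automatically supported in a ball and one can insert the cutoff for free, while the far-field piece $A_2$ is handled directly --- but the core estimate on $\partial_z^\beta(\varphi m)$ is the same four-term decomposition you write down. One small point: your appeal to Lemma~\ref{r1} to extract a H\"older increment $|m(z+h,y)-m(z,y)|\le CK|h|^\beta$ is slightly off, since Lemma~\ref{r1} is stated for Schwartz functions and $m(\cdot,y)$ is merely bounded; the conclusion is still true (and standard), but in fact you do not need it at all. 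The Lipschitz bound $|\zeta(z+h)-\zeta(z)|\le C|h|$ already renders the integrand in $R$ of order $K|h|^{1-d-\beta}$ near $h=0$, which is integrable because $\beta<1$ --- exactly as the paper does for its term $G_3$ using only $\sup|m|$ and $|\nabla\varphi|$.
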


\begin{proof}
Let the support of $u$ is a subset of the ball centered at $x_{0}$ with
radius $1$. Then for $x\in \mathbf{R}^{d},$%
\begin{eqnarray*}
\mathcal{A}u(x) &=&\int_{|y|\leq 1}\nabla _{y}^{\alpha }u(x)m(x,y)\frac{dy}{%
|y|^{d+\alpha }}+\int_{|y|>1}\nabla _{y}^{\alpha }u(x)m(x,y)\frac{dy}{%
|y|^{d+\alpha }} \\
&=&A_{1}(x)+A_{2}(x),
\end{eqnarray*}%
and%
\begin{equation*}
|A_{2}|_{p}^{p}\leq C|u|_{\alpha ,p}^{p}\sup_{z,y}|m(z,y)|^{p}.
\end{equation*}%
Let $\varphi \in C_{0}^{\infty }(\mathbf{R}^{d}),0\leq \varphi \leq
1,\varphi (x)=1$ if $|x|\leq 1,$ and $\varphi (x)=0$ if $|x|>2$. Then%
\begin{equation*}
A_{1}(x)=\int \nabla _{y}^{\alpha }u(x)\varphi \left( \frac{x-x_{0}}{2}%
\right) m(x,y)1_{|y|\leq 1}\frac{dy}{|y|^{d+\alpha }}
\end{equation*}%
and by Lemma \ref{rem5},%
\begin{equation*}
|A_{1}|_{p}^{p}\leq C|\partial ^{\alpha }u|_{p}^{p}\{\int \sup_{y}[|\varphi
\left( \frac{z-x_{0}}{2}\right) m(z,y)|^{p}+\left\vert \partial _{z}^{\beta
}\left( \varphi \left( \frac{z-x_{0}}{2}\right) m(z,y)\right) \right\vert
^{p}]dz.
\end{equation*}%
For each $y,z\in \mathbf{R}^{d},$%
\begin{eqnarray*}
&&\partial _{z}^{\beta }\left( \varphi \left( \frac{z-x_{0}}{2}\right)
m(z,y)\right) \\
&=&\int_{|\upsilon |>1}\left[ \varphi \left( \frac{z+\upsilon -x_{0}}{2}%
\right) m(z+\upsilon ,y)-\varphi \left( \frac{z-x_{0}}{2}\right) m(z,y)%
\right] \frac{d\upsilon }{|v|^{d+\beta }} \\
&&+\varphi \left( \frac{z-x_{0}}{2}\right) \int_{|\upsilon |\leq 1}\left[
m(z+\upsilon ,y)-m(z,y)\right] \frac{d\upsilon }{|v|^{d+\beta }} \\
&&+m(z,y)\int_{|\upsilon |\leq 1}\left[ \varphi \left( \frac{z+\upsilon
-x_{0}}{2}\right) -\varphi \left( \frac{z-x_{0}}{2}\right) \right] \frac{%
d\upsilon }{|v|^{d+\beta }} \\
&&+\int_{|\upsilon |\leq 1}\left[ m(z+\upsilon ,y)-m(z,y)\right] \left[
\varphi \left( \frac{z+\upsilon -x_{0}}{2}\right) -\varphi \left( \frac{%
z-x_{0}}{2}\right) \right] \frac{d\upsilon }{|\upsilon |^{d+\beta }} \\
&=&G_{0}(z,y)+G_{1}(z,y)+G_{2}(z,y)+G_{3}(z,y).
\end{eqnarray*}%
Obviously,%
\begin{equation*}
\int \sup_{y}|G_{0}(z,y)|^{p}dz\leq CK^{p}\int |\varphi (z)|^{p}dz.
\end{equation*}%
Since 
\begin{eqnarray*}
&&\int_{|\upsilon |\leq 1}\left[ m(z+\upsilon ,y)-m(z,y)\right] \frac{%
d\upsilon }{|v|^{d+\beta }} \\
&=&\partial _{z}^{\beta }m(z,y)-\int_{|\upsilon |>1}\left[ m(z+\upsilon
,y)-m(z,y)\right] \frac{d\upsilon }{|v|^{d+\beta }},
\end{eqnarray*}%
we have%
\begin{equation*}
\int \sup_{y}|G_{1}(z,y)|^{p}dz\leq CK^{p}\int |\varphi (z)|^{p}dz.
\end{equation*}%
Also,%
\begin{eqnarray*}
&&\int_{|\upsilon |\leq 1}\left\vert \varphi \left( \frac{z+\upsilon -x_{0}}{%
2}\right) -\varphi \left( \frac{z-x_{0}}{2}\right) \right\vert \frac{%
d\upsilon }{|v|^{d+\beta }} \\
&\leq &C\int_{0}^{1}\int_{|\upsilon |\leq 1}\left\vert \nabla \varphi \left( 
\frac{z+s\upsilon -x_{0}}{2}\right) \right\vert \frac{d\upsilon }{|\upsilon
|^{d+\beta -1}}
\end{eqnarray*}%
\newline
and%
\begin{equation*}
\int \sup_{y}|G_{2}(z,y)|^{p}dz\leq CK^{p}|\nabla \varphi |_{p}^{p}.
\end{equation*}%
Finally,%
\begin{equation*}
\int \sup_{y}|G_{3}(z,y)|^{p}dz\leq C\sup_{y,z}|m(z,y)|^{p}\left\vert \nabla
\varphi \right\vert _{p}^{p}
\end{equation*}%
and the statement follows.
\end{proof}

\begin{corollary}
\label{cor4}Assume $\beta \in (0,1),p>d/\beta $, and 
\begin{equation*}
|m(z,y)|+|\partial _{z}^{\beta }m(z,y)|\leq K,z,y\in \mathbf{R}^{d}.
\end{equation*}%
Then there is a constant $C=C(\alpha ,\beta ,p,d)$ such that%
\begin{equation*}
|\mathcal{A}u|_{p}^{p}\leq CK^{p}|u|_{\alpha ,p}^{p}.
\end{equation*}
\end{corollary}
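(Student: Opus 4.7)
The plan is to remove the unit-ball support restriction on $u$ in Lemma \ref{rem6} by a partition of unity in $x$. Fix $\zeta \in C_0^{\infty}(\mathbf{R}^d)$ supported in $\{|x|\le 1\}$ with $\sum_{k\in \mathbf{Z}^d}\zeta(x-k)\equiv 1$, set $\zeta_k(x)=\zeta(x-k)$ and $u_k = \zeta_k u$. Each $u_k$ has support in the unit ball $B(k,1)$, and by linearity $\mathcal{A}u = \sum_k \mathcal{A}u_k$. To control the infinite sum I split the operator as $\mathcal{A}=\mathcal{A}_1+\mathcal{A}_2$, where $\mathcal{A}_i$ collects the $|y|\le 1$ (resp.\ $|y|>1$) part of the kernel.

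The tail $\mathcal{A}_2 u$ is handled directly: since $\int_{|y|>1}|y|^{-d-\alpha}\,dy<\infty$ and, when $\alpha>1$, $\int_{|y|>1}|y|^{1-d-\alpha}\,dy<\infty$, Minkowski's inequality gives $|\mathcal{A}_2 v|_p \le CK|v|_{\alpha,p}$ for any $v\in H_p^{\alpha}(\mathbf{R}^d)$ (using $|\nabla v|_p\le C|v|_{\alpha,p}$ when $\alpha>1$; the case $\alpha=1$ is identical). For $\mathcal{A}_1$ the key geometric observation is that $\mathcal{A}_1 u_k(x)=0$ whenever $|x-k|>2$: then $u_k(x)=\nabla u_k(x)=0$ and $u_k(x+y)=0$ for every $|y|\le 1$. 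Thus the supports $\{B(k,2)\}$ have bounded overlap $N_0=N_0(d)$, and at each $x$ at most $N_0$ terms of the sum are nonzero, so
\begin{equation*}
|\mathcal{A}_1 u(x)|^p \le N_0^{p-1}\sum_k |\mathcal{A}_1 u_k(x)|^p.
\end{equation*}
Integrating and using Lemma \ref{rem6} combined with the $\mathcal{A}_2$ estimate applied to $u_k$ (which together yield $|\mathcal{A}_1 u_k|_p \le |\mathcal{A}u_k|_p+|\mathcal{A}_2 u_k|_p \le CK|u_k|_{\alpha,p}$), I obtain
\begin{equation*}
|\mathcal{A}_1 u|_p^p \le CK^p\sum_k |u_k|_{\alpha,p}^p.
\end{equation*}

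The remaining and, in my view, main technical step is the Sobolev localization inequality $\sum_k |\zeta_k u|_{\alpha,p}^p \le C|u|_{\alpha,p}^p$. For integer $\alpha$ this is immediate from Leibniz and bounded overlap of the $\zeta_k$; for fractional $\alpha\in(0,2)$, one uses the equivalent norm $|u|_{\alpha,p}\sim |u|_p+|\partial^{\alpha}u|_p$ together with the commutator expansion $\partial^{\alpha}(\zeta_k u) = \zeta_k\partial^{\alpha}u + [\partial^{\alpha},\zeta_k]u$, where a pointwise estimate on the commutator (analogous to the computation in the proof of Lemma \ref{rem5}) combined with bounded overlap of $\{\zeta_k\}$ produces the required bound. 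Adding the contribution of $\mathcal{A}_2 u$ then yields $|\mathcal{A}u|_p \le CK|u|_{\alpha,p}$, which is Corollary \ref{cor4}.
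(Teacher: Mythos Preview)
Your argument is correct and follows the same underlying strategy as the paper: localize $u$ so that Lemma~\ref{rem6} applies, then reassemble. The execution differs only in that you use a discrete partition of unity $\sum_{k\in\mathbf{Z}^d}\zeta_k\equiv 1$ and write $\mathcal{A}u=\sum_k\mathcal{A}u_k$, whereas the paper uses a continuous averaging: it picks $\zeta\in C_0^\infty$ with $\int|\zeta|^p=1$, writes $|\mathcal{A}u(x)|^p=\int|\zeta(x-\upsilon)\mathcal{A}u(x)|^p\,d\upsilon$, and expands $\zeta(\cdot-\upsilon)\mathcal{A}u$ via the commutator identity $\zeta\mathcal{A}u=\mathcal{A}(\zeta u)-u\mathcal{A}\zeta-\int(u(\cdot+y)-u)(\zeta(\cdot+y)-\zeta)m\,|y|^{-d-\alpha}dy$. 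The continuous version is slightly cleaner because integration over $\upsilon$ replaces your bounded-overlap argument and makes the near/far split $\mathcal{A}=\mathcal{A}_1+\mathcal{A}_2$ unnecessary; on the other hand your version is perhaps more transparent. The Sobolev localization step you flag as the ``main technical step'' ($\sum_k|\zeta_k u|_{\alpha,p}^p\le C|u|_{\alpha,p}^p$) is exactly what the paper also has to prove at the end, in the continuous form $\int|u\zeta(\cdot-\upsilon)|_{\alpha,p}^p\,d\upsilon\le C|u|_{\alpha,p}^p$, and the paper does it precisely by the commutator expansion of $\partial^\alpha(\zeta u)$ you indicate (your pointer to Lemma~\ref{rem5} is slightly off; the relevant computation is the one the paper carries out in the last lines of the proof of this corollary).
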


\begin{proof}
Let $\zeta \in C_{0}^{\infty }(\mathbf{R}^{d})$ be such that $\int |\zeta
|^{p}dx=1$ and $\zeta $ has its support in the unit ball centered at the
origin. Then for each $x\in \mathbf{R}^{d},$%
\begin{equation*}
|\mathcal{A}u(x)|^{p}=\int |\zeta (x-\upsilon )\mathcal{A}u(x)|^{p}d\upsilon
.
\end{equation*}%
Obviously,%
\begin{eqnarray}
&&\zeta (x-\upsilon )\mathcal{A}u(x)  \label{fo6} \\
&=&\mathcal{A}\left( u(\cdot )\zeta (\cdot -\upsilon )\right) -u(x)\mathcal{A%
}\zeta (x-\upsilon )  \notag \\
&&+\int \left( u(x+y)-u(x)\right) \left( \zeta (x+y-\upsilon )-\zeta
(x-\upsilon )\right) m(x,y)\frac{dy}{|y|^{d+\alpha }}.  \notag
\end{eqnarray}%
Now%
\begin{eqnarray*}
\mathcal{A}\zeta (x-\upsilon ) &\mathcal{=}&\int \left[ \zeta (x+y-\upsilon
)-\zeta (x-\upsilon )\right] m(x,y)\frac{dy}{|y|^{d+\alpha }} \\
&=&\int_{|y|>1}\left[ \zeta (x+y-\upsilon )-\zeta (x-\upsilon )\right] m(x,y)%
\frac{dy}{|y|^{d+\alpha }} \\
&&+\int_{|y|\leq 1}\left[ \zeta (x+y-\upsilon )-\zeta (x-\upsilon )\right]
m(x,y)\frac{dy}{|y|^{d+\alpha }} \\
&=&A_{1}(x,\upsilon )+A_{2}(x,\upsilon ).
\end{eqnarray*}%
Obviously,%
\begin{equation*}
|A_{1}(x\,,\upsilon )|\leq K[\int_{|y|>1}|\zeta (x+y-\upsilon )|\frac{dy}{%
|y|^{d+\alpha }}+|\zeta (x-\upsilon )|\int_{|y|>1}\frac{dy}{|y|^{d+\alpha }}]
\end{equation*}%
and%
\begin{equation*}
|A_{2}(x,\upsilon )|\leq K\int_{|y|\leq 1}\int_{0}^{1}|\nabla \zeta
(x+sy-\upsilon )|\frac{dy}{|y|^{d+\alpha -1}}.
\end{equation*}%
Therefore,%
\begin{equation*}
|\mathcal{A}_{x}\mathcal{\zeta }(x-\cdot )|_{p}^{p}\leq CK^{p}.
\end{equation*}%
Denoting%
\begin{equation*}
D(x,\upsilon )=\int \left( u(x+y)-u(x)\right) \left( \zeta (x+y-\upsilon
)-\zeta (x-\upsilon )\right) m(x,y)\frac{dy}{|y|^{d+\alpha }},
\end{equation*}%
we have%
\begin{eqnarray*}
|D(x,\upsilon )| &\leq &K\int_{0}^{1}\int_{|y|\leq 1}|u(x+y)-u(x)|~|\nabla
\zeta (x+sy-\upsilon )|\frac{dy}{|y|^{d+\alpha -1}} \\
&&+K\int_{|y|>1}(|u(x+y)|+|u(x)|\left( |\zeta (x+y-\upsilon )|+|\zeta
(x-\upsilon )|\right) \frac{dy}{|y|^{d+\alpha }}
\end{eqnarray*}%
and%
\begin{equation*}
\int \int |D(x,\upsilon )|^{p}dxd\upsilon \leq CK^{p}|u|_{\alpha ^{\prime
},p}^{p}
\end{equation*}%
for some $\alpha ^{\prime }<\alpha $. Therefore, by Lemma \ref{rem6},%
\begin{equation*}
|\mathcal{A}u|_{p}^{p}\leq CK^{p}[\int |\left( u(\cdot )\zeta (\cdot
-\upsilon )\right) |_{\alpha ,p}^{p}d\upsilon +|u|_{p}^{p}+|u|_{\alpha
^{\prime },p}^{p}).
\end{equation*}%
Since as in (\ref{fo6}) 
\begin{eqnarray*}
&&\partial _{x}^{\alpha }\left( u(x)\zeta (x-\upsilon )\right) \\
&=&\partial ^{\alpha }u(x)\zeta (x-\upsilon )+u(x)\partial ^{\alpha }\zeta
(x-\upsilon ) \\
&&+\int \left( u(x+y)-u(x)\right) \left( \zeta (x+y-\upsilon )-\zeta
(x-\upsilon )\right) \frac{dy}{|y|^{d+\alpha }},
\end{eqnarray*}%
we derive in a similar way,%
\begin{eqnarray*}
\int |\left( u(\cdot )\zeta (\cdot -\upsilon )\right) |_{\alpha
,p}^{p}d\upsilon &=&\int |\left( u(\cdot )\zeta (\cdot -\upsilon )\right)
|_{p}^{p}d\upsilon +\int |\partial ^{\alpha }(u(\cdot )\zeta (\cdot
-\upsilon ))|_{p}^{p}d\upsilon \\
&\leq &C|u|_{\alpha ,p}.
\end{eqnarray*}%
The statement follows.
\end{proof}

\subsection{Solution for $m$ independent of spacial variable}

In this section, we consider the following partial case of equation (\ref%
{eq1}):%
\begin{eqnarray}
\partial _{t}u(t,x) &=&A^{m}u(t,x)-\lambda u(t,x)+f(t,x),  \label{pr4} \\
u(0,x) &=&0,  \notag
\end{eqnarray}%
where $m(t,x,y)=m(t,y)$ does not depend on the spacial variable.

We denote by $\mathfrak{D}_{p}(E)$, $p\geq 1$, the space of all measurable
functions $f$ on $E$ such that $f\in \cap _{\kappa >0}H_{p}^{\kappa }(E)$
and for every multiindex \hbox{$\gamma \in
\mathbf{N}_{0}^{d}$}%
\begin{equation*}
\sup_{(t,x)\in H}|D_{x}^{\gamma }f(t,x)|<\infty .
\end{equation*}

The set $\mathfrak{D}_{p}(E)$ is a dense subset of $H_{p}^{\kappa }(E)$ (see 
\cite{mikprag}).

\begin{lemma}
\label{le3}(see Theorem 14 in \cite{mikprag}) Let$\ p\geq 2,f\in \mathfrak{D}%
_{p}(E)$ and Assumption~\emph{A} be satisfied.

Then there is a unique strong solution $u\in \mathfrak{D}_{p}(E)$ of \emph{(%
\ref{pr4})}. Moreover, $u(t,x)$ is continuous in $t$, smooth in $x$ and the
following assertions hold:

\emph{(i)} for every multiindex $\gamma \in \mathbf{N}_{0}^{d}$ 
\begin{equation}
|D^{\gamma }u|_{p}\leq C\rho _{\lambda }|D^{\gamma }f|_{p}.  \notag
\end{equation}%
where $\rho _{\lambda }=T\wedge \frac{1}{\lambda }$ and the constant $%
C=C(\alpha ,p,d,K,\eta )$;

\emph{(ii)} the following estimate holds:%
\begin{equation*}
|u|_{\alpha ,p}\leq C|f|_{p},
\end{equation*}%
where the constant $C=C(\alpha ,p,d,T,K,\eta )$.
\end{lemma}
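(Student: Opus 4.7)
Since $m(t,x,y)=m(t,y)$ is independent of the spatial variable, the operator $A^{m}$ acts in $x$ as a Fourier multiplier with symbol
\[
\psi(t,\xi)=\int\bigl[e^{i(\xi,y)}-1-\chi_{\alpha}(y)i(\xi,y)\bigr]m(t,y)\frac{dy}{|y|^{d+\alpha}},
\]
and Assumption $\mathbf{A}_{0}$(iii) applied to the lower bound $m\ge m_{0}$ yields a uniform estimate $\mathrm{Re}\,\psi(t,\xi)\le -\eta'|\xi|^{\alpha}$. The plan is to construct the solution explicitly via Fourier/Duhamel representation and then upgrade to the Sobolev estimate (ii) by a multiplier analysis consistent with the singular-integral methods of Lemmas \ref{lem1}--\ref{lem2}.

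First, for $f\in\mathfrak{D}_{p}(E)$ define the candidate
\[
\hat{u}(t,\xi)=\int_{0}^{t}\exp\!\left(\int_{s}^{t}\psi(r,\xi)\,dr-\lambda(t-s)\right)\hat{f}(s,\xi)\,ds.
\]
The bound $\bigl|\exp(\int_{s}^{t}\psi(r,\xi)\,dr)\bigr|\le e^{-\eta'(t-s)|\xi|^{\alpha}}$ ensures $\hat{u}(t,\cdot)$ decays faster than any polynomial whenever $\hat{f}(s,\cdot)$ does, so $u\in\mathfrak{D}_{p}(E)$ and $u$ solves (\ref{pr4}) in the classical pointwise sense. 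Uniqueness is immediate from the same Fourier representation, since any solution in $\mathfrak{D}_{p}$ must satisfy the ODE in $\xi$ fibrewise.

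Next, for part (i), observe that because $m$ does not depend on $x$, the operator $A^{m}$ commutes with every differentiation $D^{\gamma}$. Thus $D^{\gamma}u$ solves (\ref{pr4}) with right-hand side $D^{\gamma}f$, and it suffices to prove the plain estimate $|u|_{p}\le C\rho_{\lambda}|f|_{p}$. Writing $u(t)=\int_{0}^{t}K_{s,t}\ast f(s)\,ds$ with $K_{s,t}=\mathcal{F}^{-1}\exp(\int_{s}^{t}\psi(r,\xi)\,dr-\lambda(t-s))$, Lévy--Khinchin shows $\{K_{s,t}\}$ are sub-probability densities (up to the factor $e^{-\lambda(t-s)}$), so Minkowski and Young yield $|u(t)|_{p}\le\int_{0}^{t}e^{-\lambda(t-s)}|f(s)|_{p}\,ds$, and integrating in $t$ gives the claimed factor $\rho_{\lambda}=T\wedge\lambda^{-1}$.

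The main difficulty is part (ii), the estimate $|u|_{\alpha,p}\le C|f|_{p}$, which is an $L_{p}$ smoothing gain of order $\alpha$. This amounts to proving that the Fourier multiplier $(1+|\xi|^{2})^{\alpha/2}\int_{0}^{t}\exp\bigl(\int_{s}^{t}\psi(r,\xi)\,dr-\lambda(t-s)\bigr)\,ds$, acting on $f$, is bounded on $L_{p}$ uniformly in $t$. The obstacle, already flagged in the introduction, is that $\psi(t,\xi)$ is \emph{not} smooth in $\xi$, so neither Mikhlin's nor Hörmander's theorem can be applied directly. Instead one combines two ingredients: on the one hand the continuity $A^{m}\colon H_{p}^{\alpha}\to L_{p}$ with constant depending only on $K$ (a consequence of Lemma \ref{lem2} applied fibrewise in $t$), and on the other hand the sharp coercivity coming from $\mathrm{Re}\,\psi\le-\eta'|\xi|^{\alpha}$, which is used through a direct singular-integral analysis of the convolution kernels $K_{s,t}$ analogous to the one carried out in Lemma \ref{lem1}. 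This is precisely the content of Theorem 14 in \cite{mikprag}, which we invoke to conclude; the hard work has already been done there, and the present lemma is simply a restatement tailored to the present notation.
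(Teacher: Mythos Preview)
The paper does not prove this lemma at all: it is stated with the parenthetical ``(see Theorem~14 in \cite{mikprag})'' and no proof environment follows---the text proceeds immediately to Proposition~\ref{prop1}. Your proposal ultimately does the same thing, invoking Theorem~14 in \cite{mikprag} for the hard part~(ii), so in that sense you are aligned with the paper.

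What you add beyond the citation is a sketch of the easier parts: the Duhamel/Fourier representation, the observation that $A^{m}$ commutes with $D^{\gamma}$ when $m=m(t,y)$, and the $L_{p}$ bound via the sub-probability kernels $K_{s,t}$ and Young's inequality in the time variable. These additions are correct and helpful. One small remark: your claim that $u\in\mathfrak{D}_{p}(E)$ follows from Fourier decay is a bit brisk---$\mathfrak{D}_{p}(E)$ requires both $u\in\bigcap_{\kappa}H_{p}^{\kappa}(E)$ and $\sup|D^{\gamma}u|<\infty$, and strictly speaking you need the $L_{p}$ estimate of part~(i) (applied to all $D^{\gamma}u$) to get the first, not just pointwise Fourier decay. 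But this is a cosmetic issue of ordering, not a gap.
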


Passing to the limit we arrive at

\begin{proposition}
\label{prop1}Let $p\geq 2,f\in L_{p}(E)$ and Assumption~\emph{A} be
satisfied.

Then there is a unique strong solution $u\in H_{p}^{\alpha }(E)$ of \emph{(%
\ref{pr4})}. Moreover, there are constants $C_{0}=C_{0}(\alpha ,p,d,T,K,\eta
)$ and $C_{00}=C_{00}(\alpha ,p,d,K,\eta )$ such that 
\begin{equation}
|u|_{\alpha ,p}\leq C_{0}|f|_{p}  \notag
\end{equation}%
and%
\begin{equation}
|u|_{p}\leq C_{00}\rho _{\lambda }|f|_{p}  \notag
\end{equation}%
where $\rho _{\lambda }=T\wedge \frac{1}{\lambda }$.
\end{proposition}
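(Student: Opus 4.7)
\noindent\textbf{Proof plan for Proposition \ref{prop1}.}
The plan is a straightforward density argument passing to the limit in Lemma \ref{le3}, together with a mollification argument for uniqueness. Since $\mathfrak{D}_{p}(E)$ is dense in $L_{p}(E)$, first pick $f_{n}\in \mathfrak{D}_{p}(E)$ with $f_{n}\to f$ in $L_{p}(E)$. Lemma \ref{le3} produces strong solutions $u_{n}\in \mathfrak{D}_{p}(E)$ of \eqref{pr4} with data $f_{n}$ that satisfy
\begin{equation*}
|u_{n}|_{\alpha ,p}\leq C|f_{n}|_{p},\qquad |u_{n}|_{p}\leq C\rho _{\lambda }|f_{n}|_{p}
\end{equation*}
(the second inequality being the $\gamma =0$ case of part (i) of Lemma \ref{le3}). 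Because \eqref{pr4} is linear, $u_{n}-u_{m}$ is the strong solution corresponding to $f_{n}-f_{m}$, and the same estimates show that $\{u_{n}\}$ is Cauchy in $H_{p}^{\alpha }(E)$; denote the limit by $u$.

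Next I would verify that $u$ is a solution in the sense of the definition. Because $m(t,y)$ is independent of $x$, Lemma \ref{lem2} applied slicewise in $t$ yields that $A^{m}\colon H_{p}^{\alpha }(E)\to L_{p}(E)$ is bounded, so $A^{m}u_{n}\to A^{m}u$ in $L_{p}(E)$. Each $u_{n}$ satisfies the integral identity
\begin{equation*}
u_{n}(t)=\int_{0}^{t}\bigl(A^{m}u_{n}-\lambda u_{n}+f_{n}\bigr)(s)\,ds
\end{equation*}
in $L_{p}(\mathbf{R}^{d})$, and every term on the right converges in $L_{p}$; hence $u$ satisfies the analogous identity, which places $u$ in $\mathcal{H}_{p}^{\alpha }(E)$ and makes it a solution. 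Taking $n\to \infty $ in the a priori bounds gives the two estimates of the proposition with explicit constants $C_{0}$ and $C_{00}$.

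For uniqueness, suppose $v\in \mathcal{H}_{p}^{\alpha }(E)$ solves \eqref{pr4} with $f=0$. Since $m(t,y)$ is independent of $x$, the operator $A^{m}$ commutes with spatial convolution (Fubini applied to the $y$-integral defining $A^{m}$). For a standard spatial mollifier $\zeta _{\varepsilon }$, the mollification $v_{\varepsilon }(t,\cdot )=v(t,\cdot )\ast \zeta _{\varepsilon }$ therefore solves the same homogeneous equation; the H\"{o}lder bound $|D^{\gamma }v_{\varepsilon }(t,x)|\leq |D^{\gamma }\zeta _{\varepsilon }|_{p'}|v(t)|_{p}$ and the membership $v\in H_{p}^{\alpha }(E)$ show $v_{\varepsilon }\in \mathfrak{D}_{p}(E)$. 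The uniqueness clause of Lemma \ref{le3} forces $v_{\varepsilon }\equiv 0$, and letting $\varepsilon \to 0$ yields $v=0$.

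The only real point requiring care is this last uniqueness step: one must check that convolution in $x$ with $\zeta _{\varepsilon }$ genuinely commutes with $A^{m}$ under just the $H_{p}^{\alpha }$-regularity of $v$, and that the resulting $v_{\varepsilon }$ sits in $\mathfrak{D}_{p}(E)$ rather than merely in some smooth subspace. Both checks go through precisely because $m$ carries no $x$-dependence, so all other steps — density of $\mathfrak{D}_{p}(E)$, the linearity, and the boundedness of $A^{m}$ on $H_{p}^{\alpha }(E)$ — are routine.
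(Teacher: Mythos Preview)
Your proposal is correct and follows essentially the same approach as the paper: approximate $f$ by $f_{n}\in\mathfrak{D}_{p}(E)$, use Lemma~\ref{le3} to produce a Cauchy sequence $u_{n}$ in $H_{p}^{\alpha}(E)$, pass to the limit, and for uniqueness mollify in $x$ and invoke the uniqueness in Lemma~\ref{le3}. The only cosmetic difference is that the paper first passes to the limit in the weak (tested) form of the equation and then uses boundedness of $A^{m}$ to upgrade to a strong solution, whereas you pass to the limit directly in the strong integral identity using Lemma~\ref{lem2}; your route is slightly more direct but equivalent.
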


\begin{proof}
\emph{Existence}. There is a sequence of input functions $f_{n},$ $%
n=1,2,\ldots ,$ such that $f_{n}\in \mathfrak{D}_{p}(E)$, and%
\begin{equation}
|f-f_{n}|_{p}\rightarrow 0  \label{eq24}
\end{equation}%
as $n\rightarrow \infty $. By Lemma \ref{le3}, for every $n$ there is a
strong solution $u_{n}\in \mathfrak{D}_{p}(E)$ of (\ref{pr4}) with the input
function $f_{n}$. Since (\ref{pr4}) is a linear equation, using the estimate
(ii) of Lemma~\ref{le3} we derive that $(u_{n})$ is a Cauchy sequence in $%
H_{p}^{\alpha }(E)$. Hence, there is a function $u\in H_{p}^{\alpha }(E)$
such that $|u_{n}-u|_{\alpha ,p}\rightarrow 0$ as $n\rightarrow \infty $.

Passing to the limit in the inequalities of Lemma \ref{le3} with $u,f$
replaced by $u_{n},f_{n}$ ($\gamma =0)$, we get the corresponding estimates
for $u.$

Denoting $\left\langle f,g\right\rangle =\int fgdx$ and passing to the limit
in the equality (see definition (\ref{defs})) 
\begin{equation*}
\left\langle u_{n}(t,\cdot ),\varphi \right\rangle =\int_{0}^{t}\Bigl[%
\big\langle(A-\lambda )u_{n}(s,\cdot )+f(s,\cdot ),\varphi \big\rangle\Bigr]%
ds,\varphi \in \mathcal{S}(\mathbf{R}^{d}),
\end{equation*}%
as $n\rightarrow \infty $, we get that the function $u$ is a weak solution
of (\ref{pr4}).

Since for each $\upsilon \in H_{p}^{\alpha }(E)$ 
\begin{equation*}
|A\upsilon |_{\alpha ,p}\leq C|\upsilon |_{p},
\end{equation*}%
the solution is strong.

\medskip \emph{Uniqueness}. Let $u\in H_{p}^{\alpha }(E)$ be a solution of (%
\ref{pr4}) with zero input function $f$. Hence, for every $\varphi \in 
\mathcal{S}(\mathbf{R}^{d})$ and $t\in \lbrack 0,T]$ 
\begin{equation}
\big\langle u(t,\cdot ),\varphi \big\rangle=\int_{0}^{t}\big\langle %
u(s,\cdot ),A^{(\alpha )\ast }\varphi -\lambda \varphi \big\rangle ds
\label{eq25}
\end{equation}

Let $\zeta _{\varepsilon }=\zeta _{\varepsilon }(x)$, $x\in \mathbf{R}^{d}$, 
$\varepsilon \in (0,1)$, be a standard mollifier. Inserting $\varphi (\cdot
)=\zeta _{\varepsilon }(x-{\cdot })$ into (\ref{eq25}), we get that the
function 
\begin{equation*}
\upsilon _{\varepsilon }(t,x)=u(t,\cdot )\ast \zeta _{\varepsilon }(x)
\end{equation*}%
belongs to $\mathfrak{D}_{p}(E)$ and 
\begin{equation*}
\upsilon _{\varepsilon }(t,x)=\int_{0}^{t}\big(A-\lambda \big)\upsilon
_{\varepsilon }(s,x)ds.
\end{equation*}%
By Lemma \ref{le3}, $\upsilon _{\varepsilon }=0$ in $E$ for all $\varepsilon
\in (0,1)$. Hence, $u(t,\cdot )=0$ and the statement holds.
\end{proof}

\section{Proofs of main Theorems}

We follow the proof of Theorem 1.6.4 in \cite{kry}. In order to use the
method of continuity, we derive the a priori estimates first.

\begin{lemma}
\label{lem3}Assume \textbf{A} holds, $\beta \in (0,1),p>d/\beta ,p\geq 2.$
There are $\varepsilon =\varepsilon (d,\alpha ,\beta ,K,w,T,\eta
),C=C(d,\alpha ,\beta ,p,K,w,T,\eta )$ and $\lambda _{0}=\lambda
_{0}(d,\alpha ,\beta ,p,K,w,T,\eta )\geq 1$ such that for any $u\in 
\mathfrak{D}_{p}(E)$ satisfying (\ref{eq1}) with $B=0$ and with support in a
ball of radius $\varepsilon $ ($u(t,x)=0$ for all $t$ if $x$ does not belong
to a ball of radius $\varepsilon $),%
\begin{eqnarray*}
|u|_{\alpha ,p} &\leq &C|f|_{p}, \\
|u|_{p} &\leq &\frac{C}{\lambda }|f|_{p}\text{ if }\lambda \geq \lambda _{0}.
\end{eqnarray*}
\end{lemma}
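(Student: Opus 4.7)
The plan is to run a classical freezing-of-coefficients argument adapted to the non-local setting. Let $x_{0}$ denote the centre of the support ball and set $\bar{m}(t,y)=m(t,x_{0},y)$. Since $\bar{m}\geq m_{0}$ and $\bar m$ is $x$-independent, $\bar m$ satisfies Assumption~$\mathbf{A}$, with the cancellation $\mathbf{A}$(iii) for $\alpha =1$ inherited from $m$ at $x=x_{0}$. Rewriting \eqref{eq1} (with $B=0$) as $\partial _{t}u-A^{\bar{m}}u+\lambda u=f+Ru$ where $Ru:=(A-A^{\bar{m}})u$, Proposition~\ref{prop1} applied to the frozen-kernel equation gives
\[
|u|_{\alpha ,p}\leq C_{0}\bigl(|f|_{p}+|Ru|_{p}\bigr),\qquad |u|_{p}\leq C_{00}\rho _{\lambda }\bigl(|f|_{p}+|Ru|_{p}\bigr),
\]
so the task reduces to a perturbation estimate of the form $|Ru|_{p}\leq \delta (\varepsilon )|\partial ^{\alpha }u|_{p}+C_{\ast }(\varepsilon )|u|_{p}$ with $\delta (\varepsilon )\to 0$ as $\varepsilon \to 0$.

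To prove it, take $\psi \in C_{0}^{\infty }(\mathbf{R}^{d})$ equal to $1$ on $\{|z|\leq 2\}$ and supported in $\{|z|\leq 3\}$, set $\psi _{\varepsilon }(x)=\psi ((x-x_{0})/\varepsilon )$, and split $\tilde{m}:=m-\bar{m}=\psi _{\varepsilon }\tilde{m}+(1-\psi _{\varepsilon })\tilde{m}$. For the \emph{interior} piece, rescale $\bar{u}(\xi ):=u(x_{0}+\varepsilon \xi )$ (supported in the unit ball), so that, modulo a drift correction for $\alpha =1$ absorbed by $\mathbf{A}$(iii),
\[
\mathcal{A}^{\psi _{\varepsilon }\tilde{m}}u(x_{0}+\varepsilon \xi )=\varepsilon ^{-\alpha }\bar{\mathcal{A}}^{\bar{m}_{\varepsilon }}\bar{u}(\xi ),\qquad \bar{m}_{\varepsilon }(\xi ,\eta )=\psi (\xi )[m(x_{0}+\varepsilon \xi ,\varepsilon \eta )-m(x_{0},\varepsilon \eta )].
\]
Applying Lemma~\ref{rem6} in the rescaled variables reduces matters to controlling $\bar{K}(\varepsilon ):=\sup _{\xi ,\eta }[|\bar{m}_{\varepsilon }|+|\partial _{\xi }^{\beta }\bar{m}_{\varepsilon }|]$. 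Here $|\bar{m}_{\varepsilon }|\leq w(3\varepsilon )$ by $\mathbf{A}$(ii); the non-local product rule for $\partial _{\xi }^{\beta }$ applied to $\psi (\xi )\tilde{m}_{\varepsilon }(\xi ,\eta )$, combined with the scaling $\partial _{\xi }^{\beta }[m(x_{0}+\varepsilon \xi ,\cdot )]=\varepsilon ^{\beta }(\partial _{z}^{\beta }m)(\cdot )$, gives $|\partial _{\xi }^{\beta }\bar{m}_{\varepsilon }|\leq C(w(3\varepsilon )+\varepsilon ^{\beta })$, once the bilinear remainder is tamed using $\int_{0}^{1}w(r)r^{-\beta -1}dr<\infty $. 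Hence $\bar{K}(\varepsilon )=O(w(3\varepsilon )+\varepsilon ^{\beta })\to 0$, and unwinding the rescaling produces $|\mathcal{A}^{\psi _{\varepsilon }\tilde{m}}u|_{p}\leq C\bar{K}(\varepsilon )[|\partial ^{\alpha }u|_{p}+\varepsilon ^{-\alpha }|u|_{p}]$. For the \emph{exterior} piece, $\mathcal{A}^{(1-\psi _{\varepsilon })\tilde{m}}u$ vanishes on $\{|x-x_{0}|\leq 2\varepsilon \}$; on $\{|x-x_{0}|>2\varepsilon \}$ the identities $u(x)=\nabla u(x)=0$ together with $|y|\geq |x-x_{0}|/2$ on the effective support give the pointwise bound $C|x-x_{0}|^{-d-\alpha }|u|_{1}$, after which $|u|_{1}\leq C\varepsilon ^{d/p^{\prime }}|u|_{p}$ and direct integration yield $|\mathcal{A}^{(1-\psi _{\varepsilon })\tilde{m}}u|_{p}\leq C\varepsilon ^{-\alpha }|u|_{p}$.

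Plugging the resulting perturbation bound back into the two Proposition~\ref{prop1} inequalities, first pick $\varepsilon $ small enough that $C_{0}\delta (\varepsilon )\leq 1/2$ (to absorb $|\partial ^{\alpha }u|_{p}$ on the left of the first inequality), and then pick $\lambda _{0}$ large enough that $C_{00}C_{\ast }(\varepsilon )/\lambda \leq 1/2$ (to absorb $|u|_{p}$ in the second). Iterating the two inequalities removes $|u|_{p}$ from the right of the first as well, and delivers $|u|_{\alpha ,p}\leq C|f|_{p}$ and $|u|_{p}\leq (C/\lambda )|f|_{p}$ for $\lambda \geq \lambda _{0}$. The main technical obstacle is verifying $\bar{K}(\varepsilon )\to 0$ under the rescaling: both clauses of Assumption~$\mathbf{A}$(ii) are genuinely used -- the sub-$\beta $-H\"older smallness $w(\delta )\delta ^{-\beta }\to 0$ for the sup-norm control, and the integrability of $w(|v|)/|v|^{d+\beta }$ for the bilinear remainder in the non-local product rule for $\partial ^{\beta }$.
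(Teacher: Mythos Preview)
Your proposal is correct and follows essentially the same freezing-of-coefficients strategy as the paper: rewrite the equation with the frozen operator $A^{\bar m}$, invoke Proposition~\ref{prop1}, and show the remainder $(A-A^{\bar m})u$ has a small coefficient on $|u|_{\alpha,p}$ plus a lower-order term, then absorb. The implementation differs only in bookkeeping: the paper stays in the original coordinates and applies Corollary~\ref{cor4} to the cut-off coefficient $m_0(t,z,y)\varphi((z-x_0)/2\varepsilon)$, obtaining the bound $K_\varepsilon\le C[w(2\varepsilon)\varepsilon^{-\beta}+\int_{|v|\le\varepsilon}w(|v|)|v|^{-d-\beta}dv]$, whereas you rescale to the unit ball and apply Lemma~\ref{rem6}, which produces the equivalent bound $\bar K(\varepsilon)\sim\varepsilon^{\beta}K_\varepsilon$; the paper's exterior piece carries a $1_{\alpha>1}|\nabla u|_p$ term that is then removed by interpolation, while your exterior estimate exploits $u(x)=\nabla u(x)=0$ outside $B_\varepsilon(x_0)$ to get $C\varepsilon^{-\alpha}|u|_p$ directly and skip that step.
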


\begin{proof}
Let the support of $u$ be a subset of the ball centered at $x_{0}$ with
radius $\varepsilon >0$. Then%
\begin{eqnarray*}
\partial _{t}u &=&A_{t,x_{0}}u(t,x)+A_{t,x}u(t,x)-A_{t,x_{0}}u(t,x)-\lambda
u+f, \\
u(0) &=&0.
\end{eqnarray*}%
Let $\varphi \in C_{0}^{\infty }(\mathbf{R}^{d}),0\leq \varphi \leq
1,\varphi (x)=1$ if $|x|\leq 1,$ and $\varphi (x)=0$ if $|x|>2$. Denote 
\begin{eqnarray*}
\tilde{A} &=&\varphi (\frac{x-x_{0}}{2\varepsilon }%
)[A_{t,x}u(t,x)-A_{t,x_{0}}u(t,x)], \\
m_{0}(t,x,y) &=&m(t,x,y)-m(t,x_{0},y),(t,x)\in E,y\in \mathbf{R}^{d}.
\end{eqnarray*}%
By Corollary \ref{cor4},%
\begin{equation}
|\tilde{A}|_{p}\leq C|u|_{\alpha ,p}K_{\varepsilon },  \label{fo7}
\end{equation}%
where $C=C(\alpha ,\beta ,p,d)$ and $K_{\varepsilon }$ is the constant
bounding%
\begin{equation*}
M(t,z,y)=|m_{0}(t,z,y)\varphi (\frac{z-x_{0}}{2\varepsilon })|+|\partial
^{\beta }\left( m_{0}(t,z,y)\varphi (\frac{z-x_{0}}{2\varepsilon })\right) |.
\end{equation*}%
Obviously, $|m_{0}(t,z,y)\varphi (\frac{z-x_{0}}{2\varepsilon })|\leq
w(2\varepsilon ),z,y\in R^{d},t\in \lbrack 0,T],$ and%
\begin{eqnarray*}
&&|\partial ^{\beta }\left( m_{0}(t,z,y)\varphi (\frac{z-x_{0}}{2\varepsilon 
})\right) | \\
&\leq &\int_{|\upsilon |>\varepsilon }\left\vert m_{0}(t,z+\upsilon
,y)\varphi (\frac{z+\upsilon -x_{0}}{2\varepsilon })-m_{0}(t,z,y)\varphi (%
\frac{z-x_{0}}{2\varepsilon })\right\vert \frac{d\upsilon }{|\upsilon
|^{1+\beta }} \\
&&+|\varphi (\frac{z-x_{0}}{2\varepsilon })|\int_{|\upsilon |\leq
\varepsilon }[m(t,z+\upsilon ,y)-m(t,z,y)]\frac{d\upsilon }{|\upsilon
|^{d+\beta }} \\
&&+|m_{0}(t,z,y)|\int_{|\upsilon |\leq \varepsilon }\left\vert \varphi (%
\frac{z+\upsilon -x_{0}}{2\varepsilon })-\varphi (\frac{z-x_{0}}{%
2\varepsilon })\right\vert \frac{d\upsilon }{|\upsilon |^{d+\beta }} \\
&&+|\int_{|\upsilon |\leq \varepsilon }\left\vert \varphi (\frac{z+\upsilon
-x_{0}}{2\varepsilon })-\varphi (\frac{z-x_{0}}{2\varepsilon })\right\vert
\left\vert m(t,z+\upsilon ,y)-m(t,z,y)\right\vert \frac{d\upsilon }{%
|\upsilon |^{d+\beta }} \\
&\leq &C[w(2\varepsilon )\varepsilon ^{-\beta }+\int_{|\upsilon |\leq
\varepsilon }w(|\upsilon |)\frac{d\upsilon }{|\upsilon |^{d+\beta }}].
\end{eqnarray*}%
Therefore%
\begin{equation*}
K_{\varepsilon }\leq C[w(2\varepsilon )\varepsilon ^{-\beta
}+\int_{|\upsilon |\leq \varepsilon }w(|\upsilon |)\frac{d\upsilon }{%
|\upsilon |^{d+\beta }}]
\end{equation*}%
and $K_{\varepsilon }\rightarrow 0$ as $\varepsilon \rightarrow 0$.
Obviously,%
\begin{eqnarray*}
|Au-\tilde{A}|_{p} &\leq &C\int_{|y|>\varepsilon }|\nabla _{y}^{\alpha
}u(\cdot )|_{p}\frac{dy}{y^{d+\alpha }} \\
&\leq &C\varepsilon ^{-\alpha }[|u|_{p}+1_{\alpha >1}|\nabla u|_{p}].
\end{eqnarray*}%
So, by Proposition \ref{prop1} and (\ref{fo7}), there are constants $%
C_{1}=C_{1}(\alpha ,p,d,T,K,\eta )$ and $C_{11}=C_{11}(\alpha ,p,d,K)$ such
that 
\begin{equation*}
|u|_{\alpha ,p}\leq C_{1}\left[ |f|_{p}+K_{\varepsilon }|u|_{\alpha
,p}+\varepsilon ^{-\alpha }(|u|_{p}+1_{\alpha >1}|\nabla u|_{p})\right]
\end{equation*}%
with $K_{\varepsilon }\rightarrow 0$ as $\varepsilon \rightarrow 0$ and%
\begin{equation*}
|u|_{p}\leq C_{11}\rho _{\lambda }[|f|_{p}+K_{\varepsilon }|u|_{\alpha
,p}+\varepsilon ^{-\alpha }(|u|_{p}+1_{\alpha >1}|\nabla u|_{p})],
\end{equation*}%
where $\rho _{\lambda }=\frac{1}{\lambda }\wedge T$. We choose $\varepsilon $
so that $C_{1}K_{\varepsilon }\leq 1/2,K_{\varepsilon }\leq 1$. In this case,%
\begin{eqnarray*}
|u|_{\alpha ,p} &\leq &2C_{1}\left[ |f|_{p}+\varepsilon ^{-\alpha
}(|u|_{p}+1_{\alpha >1}|\nabla u|_{p})\right] , \\
|u|_{p} &\leq &C_{11}(1+2C_{1})\rho _{\lambda }[|f|_{p}+\varepsilon
^{-\alpha }(|u|_{p}+1_{\alpha >1}|\nabla u|_{p})].
\end{eqnarray*}

By interpolation inequality, for $\alpha >1$ and each $\kappa \in (0,1)$
there is a constant $C=C(\alpha ,p,d)$ such that%
\begin{equation*}
|\nabla u|_{p}\leq \kappa |u|_{\alpha ,p}+C\kappa ^{-\frac{1}{\alpha -1}%
}|u|_{p}\text{.}
\end{equation*}%
Therefore choosing $\kappa $ so that $2C_{1}\varepsilon ^{-\alpha }\kappa
\leq \frac{1}{2}$ (if $\alpha >1)$, one can see that there is $\tilde{C}_{1}=%
\tilde{C}_{1}(\alpha ,\beta ,p,d,T,K,w,\eta )$ such that 
\begin{eqnarray*}
|u|_{\alpha ,p} &\leq &\tilde{C}_{1}\left[ |f|_{p}+|u|_{p}\right] , \\
|u|_{p} &\leq &\tilde{C}_{1}\rho _{\lambda }\left[ |f|_{p}+|u|_{p}\right] .
\end{eqnarray*}%
The statement follows by choosing $\lambda $ so that $\tilde{C}_{1}\lambda
^{-1}\leq \frac{1}{2}$ \NEG{(}$\lambda _{0}=(2\tilde{C}_{1})^{-1}).$
\end{proof}

Now we extend the estimates.

\begin{lemma}
\label{lem4}Assume \textbf{A} holds and $p>\frac{d}{\beta },p\geq 2$. There
is a constant $C=C(d,\alpha ,\beta ,p,K,w,T,\eta )$ and a number $\lambda
_{1}=\lambda _{1}(\alpha ,\beta ,d,K,w,\eta ,T)>1$ such that for any $u\in 
\mathfrak{D}_{p}(E)$ satisfying (\ref{eq1}) with $B=0$ and $\lambda \geq
\lambda _{1}$, 
\begin{eqnarray*}
|u|_{\alpha ,p} &\leq &C|f|_{p}, \\
|u|_{p} &\leq &\frac{C}{\lambda }|f|_{p}\text{ if }\lambda \geq \lambda _{1}.
\end{eqnarray*}
\end{lemma}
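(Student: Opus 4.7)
The plan is to pass from the local estimate of Lemma \ref{lem3} (which requires $u$ to have support in a ball of radius $\varepsilon$) to the global one by a Krylov-type localization argument, as in Theorem 1.6.4 of \cite{kry}. Let $\varepsilon$ and $\lambda_0$ be provided by Lemma \ref{lem3}, and fix $\zeta \in C_0^\infty(\mathbf{R}^d)$ supported in $\{|x|\leq \varepsilon\}$, normalized so that $\int |\zeta|^p\,dx = 1$; this yields the continuous partition-of-unity identity $\int |\zeta(x-y)|^p\,dy = 1$ for every $x$. For each $y\in \mathbf{R}^d$, set $u_y(t,x) = \zeta(x-y) u(t,x)$, so that $u_y(t,\cdot)$ is supported in $B_\varepsilon(y)$, and observe that $u_y$ satisfies
\[
\partial_t u_y = A u_y - \lambda u_y + \zeta(\cdot-y) f - C_y, \qquad u_y(0,\cdot)=0,
\]
where $C_y := A(\zeta(\cdot-y)\,u) - \zeta(\cdot-y)\,Au$ is the nonlocal commutator.

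Since $u_y$ has the small support required by Lemma \ref{lem3}, that lemma gives, for $\lambda \geq \lambda_0$,
\[
|u_y|_{\alpha, p}^p \leq C^p\bigl(|\zeta(\cdot-y) f|_p^p + |C_y|_p^p\bigr), \qquad |u_y|_p^p \leq \frac{C^p}{\lambda^p}\bigl(|\zeta(\cdot-y) f|_p^p + |C_y|_p^p\bigr).
\]
I then integrate in $y$. Using $\int |\zeta(x-y)|^p\,dy = 1$, one gets $\int |\zeta(\cdot-y) f|_p^p\,dy = |f|_p^p$ and $\int |u_y|_p^p\,dy = |u|_p^p$. To connect $\int |u_y|_{\alpha,p}^p\,dy$ with $|u|_{\alpha,p}^p$, I expand $\partial^\alpha(\zeta(\cdot-y) u) = \zeta(\cdot-y)\partial^\alpha u + [\partial^\alpha, \zeta(\cdot-y)] u$ and integrate in $y$, which yields $|u|_{\alpha,p}^p \leq C\int |u_y|_{\alpha,p}^p\,dy + C(|u|_{\alpha',p}^p + |u|_p^p)$ for some $\alpha' < \alpha$. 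Combined with a parallel commutator bound
\[
\int |C_y|_p^p\,dy \leq C\bigl(|u|_{\alpha',p}^p + |u|_p^p\bigr),
\]
these integrated inequalities give
\[
|u|_{\alpha,p} \leq C\bigl(|f|_p + |u|_{\alpha',p} + |u|_p\bigr), \qquad |u|_p \leq \frac{C}{\lambda}\bigl(|f|_p + |u|_{\alpha',p} + |u|_p\bigr).
\]
Finally, the interpolation inequality $|u|_{\alpha',p} \leq \kappa\,|u|_{\alpha,p} + C_\kappa |u|_p$ with $\kappa$ small absorbs the middle term into $|u|_{\alpha,p}$, and choosing $\lambda_1 \geq \lambda_0$ so that $C/\lambda_1 \leq 1/2$ uses the second inequality to absorb $|u|_p$, producing both stated bounds.

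The bound on $|C_y|_p$ is the main obstacle. Writing
\[
C_y(t,x) = \int \bigl(\zeta(x+w-y)-\zeta(x-y)\bigr)\bigl(u(x+w)-u(x)\bigr) m(t,x,w)\,\frac{dw}{|w|^{d+\alpha}} + u(x)\,A\zeta(\cdot-y)(t,x),
\]
the second term contributes $|u|_p$ after $y$-integration, since $\int |A\zeta(\cdot-y)(t,x)|^p\,dy$ is uniformly bounded in $x$ by the compact support and smoothness of $\zeta$. The first term is split into $|w|\leq 1$ and $|w|>1$: the outer piece yields $|u|_p$ via Young's inequality, and the inner piece carries the gain $|\zeta(x+w-y)-\zeta(x-y)| \lesssim |w|$ so that, after applying Lemma \ref{r1} and Lemma \ref{lem0}, it is controlled by $|u|_{\alpha', p}$ for some $\alpha' < \alpha$. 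The three regimes $\alpha\in(0,1)$, $\alpha=1$, $\alpha\in(1,2)$ must be handled separately in the spirit of Lemma \ref{lem2}, with Assumption \textbf{A}(iii) providing the needed zero-mean cancellation when $\alpha=1$. The delicate point is to verify a genuine gain of regularity in the commutator, since the kernel $m/|w|^{d+\alpha}$ is exactly of order $\alpha$ and the standard trick of pulling out a derivative of $\zeta$ works only on the small-$|w|$ portion.
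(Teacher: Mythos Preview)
Your proposal is correct and follows essentially the same Krylov localization argument as the paper: introduce $\zeta$ supported in $B_\varepsilon(0)$ with $\int|\zeta|^p=1$, write the equation satisfied by $u\zeta(\cdot-y)$, apply Lemma~\ref{lem3}, integrate in $y$, control the commutator by $|u|_{\alpha',p}+|u|_p$ for some $\alpha'<\alpha$, then interpolate and choose $\lambda$ large. The paper's proof is the same in structure and in all essential steps.

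One minor caution: you cite Lemma~\ref{lem0} for the commutator bound, but that lemma assumes $p>d/\alpha$, which is not among the hypotheses here (only $p>d/\beta$, $p\ge2$). The paper avoids this by estimating the cross term directly, exactly as in the proof of Corollary~\ref{cor4}: for $|w|\le1$ one uses $|\zeta(x+w-y)-\zeta(x-y)|\le C|w|$ together with the mean-value form $u(x+w)-u(x)=\int_0^1(\nabla u(x+sw),w)\,ds$ (when $\alpha>1$) or simply the kernel gain $|w|^{-d-\alpha+1}$ (when $\alpha\le1$), which after integration in $y$ yields $|u|_{\alpha',p}$ without any Sobolev embedding. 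No case split in the spirit of Lemma~\ref{lem2} is needed for the commutator, since the extra factor of $|w|$ from $\zeta$ already lowers the order below $\alpha$.
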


\begin{proof}
As in \cite{kry}, Theorem 1.6.4, take $\zeta \in C_{0}^{\infty }(\mathbf{R}%
^{d})$ such that $\int |\zeta |^{p}dx=1$ and whose support is in a ball of
radius $\varepsilon $ from Lemma \ref{lem3} centered at $0.$ Then%
\begin{equation}
|\partial ^{\alpha }u(t,x)|^{p}=\int |\partial ^{\alpha }u(t,x)\zeta
(x-\upsilon )|^{p}d\upsilon  \label{fo9}
\end{equation}%
and%
\begin{eqnarray}
&&\partial ^{\alpha }u(t,x)\zeta (x-\upsilon )  \label{fo8} \\
&=&\partial ^{\alpha }\left( u(t,x)\zeta (x-\upsilon )\right)
-u(t,x)\partial _{x}^{\alpha }\zeta (x-\upsilon )  \notag \\
&&+\int \left[ u(t,x+y)-u(t,x)\right] \left[ \zeta (x+y-\upsilon )-\zeta
(x-\upsilon )\right] \frac{dy}{|y|^{d+\alpha }}.  \notag
\end{eqnarray}%
Since 
\begin{eqnarray*}
&&\partial _{t}\left( u(t,x)\zeta (x-\upsilon )\right) \\
&=&\zeta (x-\upsilon )Au(t,x)-\lambda \zeta (x-\upsilon )u(t,x)+\zeta
(x-\upsilon )f(t,x) \\
&=&A\left( \zeta (x-\upsilon )u(t,x)\right) -\lambda \zeta (x-\upsilon
)u(t,x)+\zeta (x-\upsilon )f(t,x) \\
&&-u(t,x)A\zeta (x-\upsilon ) \\
&&-\int \left[ u(t,x+y)-u(t,x)\right] \left[ \zeta (x+y-\upsilon )-\zeta
(x-\upsilon )\right] m(t,x,y)\frac{dy}{|y|^{d+\alpha }},
\end{eqnarray*}%
it follows by Lemma \ref{lem3} that there is $C=C(d,\alpha ,\beta
,p,K,w,T,\eta )$ and $\lambda _{0}=\lambda _{0}(d,\alpha ,\beta
,p,K,w,T,\eta )$ such that%
\begin{eqnarray*}
\int |u\zeta (\cdot -\upsilon )|_{\alpha ,p}^{p}d\upsilon &\leq
&C[|f|_{p}^{p}+|u|_{p}^{p}+|u|_{\alpha ^{\prime },p}^{p}], \\
\int |u\zeta (\cdot -\upsilon )|_{p}^{p}d\upsilon &\leq &\frac{C}{\lambda
^{p}}[|f|_{p}^{p}+|u|_{p}^{p}+|u|_{\alpha ^{\prime },p}^{p}]\text{ if }%
\lambda \geq \lambda _{0},
\end{eqnarray*}%
for some $\alpha ^{\prime }<\alpha $. According to (\ref{fo8}) and (\ref{fo9}%
),%
\begin{eqnarray}
|\partial ^{\alpha }u|_{p}^{p} &\leq &C[|f|_{p}^{p}+|u|_{p}^{p}+|u|_{\alpha
^{\prime },p}^{p}],  \label{for9} \\
|u|_{p}^{p} &\leq &\frac{C}{\lambda ^{p}}[|f|_{p}^{p}+|u|_{p}^{p}+|u|_{%
\alpha ^{\prime },p}^{p}]\text{ if }\lambda \geq \lambda _{0}.  \notag
\end{eqnarray}%
By interpolation inequality, for each $\kappa >0$ there is a constant $%
K_{1}=K_{1}(\kappa ,\alpha ^{\prime },\alpha ,p,d)$ such that%
\begin{equation*}
|u|_{\alpha ^{\prime },p}\leq \kappa |u|_{\alpha ,p}+K_{1}|u|_{p}.
\end{equation*}%
Therefore, choosing $\kappa $ so that $C\kappa \leq 1/2$, we get by (\ref%
{for9}) that there is a constant $C_{1}=C_{1}(d,\alpha ,\beta ,p,K,w,T,\eta
) $ such that 
\begin{eqnarray}
|u|_{\alpha ,p}^{p} &\leq &C_{1}[|f|_{p}^{p}+|u|_{p}],  \label{for10} \\
|u|_{p}^{p} &\leq &\frac{C_{1}}{\lambda ^{p}}[|f|_{p}^{p}+|u|_{p}^{p}]. 
\notag
\end{eqnarray}%
We finish the proof by choosing $\lambda $ so that $\frac{C_{1}}{\lambda ^{p}%
}\leq \frac{1}{2}$ or $\lambda \geq (2C_{1})^{1/p}=\lambda _{1}$. Thus by (%
\ref{for10}), 
\begin{equation*}
|u|_{p}^{p}\leq \frac{2C_{1}}{\lambda ^{p}}|f|_{p}^{p},|u|_{\alpha
,p}^{p}\leq C_{1}(1+\frac{2C_{1}}{\lambda _{1}^{p}})|f|_{p}^{p}.
\end{equation*}

The statement follows.
\end{proof}

\begin{corollary}
\label{cor5}Assume \textbf{A} holds, $p>\frac{d}{\beta },p\geq 2$ and $u\in 
\mathfrak{D}_{p}(E)$ satisfies (\ref{eq1}$)$ with $B=0$. Then there is $%
C=C(d,\alpha ,\beta ,p,K,w,T,\eta )$ such that 
\begin{equation*}
|u|_{\alpha ,p}\leq C|f|_{p}.
\end{equation*}
\end{corollary}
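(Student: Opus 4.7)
The main idea is the standard exponential rescaling in time, reducing the estimate to the large-$\lambda$ case already handled in Lemma~\ref{lem4}. Given $\lambda \geq 0$ and $u \in \mathfrak{D}_{p}(E)$ satisfying (\ref{eq1}) with $B=0$, I set $\mu = (\lambda_{1} - \lambda) \vee 0$ and define $v(t,x) := e^{-\mu t}\, u(t,x)$. Since $A$ acts only in the spatial variable, a direct computation gives
\[
\partial_{t} v(t,x) = A v(t,x) - (\lambda+\mu) v(t,x) + e^{-\mu t} f(t,x), \qquad v(0,\cdot) = 0.
\]
Hence $v$ solves (\ref{eq1}) with $B=0$, zero-order coefficient $\tilde\lambda := \lambda + \mu \geq \lambda_{1}$, and source $\tilde f(t,x) := e^{-\mu t} f(t,x)$. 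Clearly $v \in \mathfrak{D}_{p}(E)$, so Lemma~\ref{lem4} applies and yields
\[
|v|_{\alpha,p} \leq C\, |\tilde f|_{p} \leq C\, |f|_{p}.
\]

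To recover the bound for $u$, I note that $|u(t)|_{\alpha,p} = e^{\mu t} |v(t)|_{\alpha,p}$ at each $t \in [0,T]$, so
\[
|u|_{\alpha,p}^{p} = \int_{0}^{T} e^{\mu p t} |v(t)|_{\alpha,p}^{p}\, dt \leq e^{\mu p T} |v|_{\alpha,p}^{p}.
\]
Since $\mu \leq \lambda_{1}$, this gives $|u|_{\alpha,p} \leq e^{\lambda_{1} T}\, C\, |f|_{p}$, with the enlarged constant still depending only on $d,\alpha,\beta,p,K,w,T,\eta$, as required.

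The argument is essentially a one-line reduction, so there is no real obstacle: the substantive work is already carried by Lemmas~\ref{lem3} and~\ref{lem4}. The only points worth verifying are that the substitution $v = e^{-\mu t} u$ preserves the regularity class $\mathfrak{D}_{p}(E)$ and commutes with the nonlocal operator $A$ (immediate, since $A$ involves no time derivative), and that the exponential prefactors remain harmless thanks to the uniform bounds $\mu \leq \lambda_{1}$ and $t \leq T$.
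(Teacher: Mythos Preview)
Your proof is correct and follows essentially the same approach as the paper: an exponential rescaling in time reduces the general $\lambda \geq 0$ case to the regime $\lambda \geq \lambda_{1}$ covered by Lemma~\ref{lem4}. The paper multiplies by $e^{(\lambda_{1}-\lambda)t}$ instead of $e^{-\mu t}$, but the content is identical and your sign handling is arguably cleaner.
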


\begin{proof}
For $\lambda \geq \lambda _{1}$ ($\lambda _{1}$ is from Lemma \ref{lem4}$)$,
the estimate is proved in Lemma \ref{lem4}$.$ If $u\in H_{p}^{\alpha }(E)$
solves (\ref{eq1}) with $\lambda \leq \lambda _{1}$, then $\tilde{u}%
(t,x)=e^{(\lambda _{1}-\lambda )t}u(t,x)$ solves the same equation with $%
\lambda =\lambda _{1}$ and $f$ replaced by $e^{(\lambda _{1}-\lambda )t}f.$
Hence%
\begin{equation*}
|u|_{\alpha ,p}\leq |\tilde{u}|_{\alpha ,p}\leq Ce^{(\lambda _{1}-\lambda
)T}|f|_{p}
\end{equation*}%
with $C=C(d,\alpha ,\beta ,p,K,w,T,\eta )$ from Lemma \ref{lem4}. So, the
estimate holds for all $\lambda \geq 0$.
\end{proof}

\subsection{Proof of Theorem \protect\ref{main}}

We use the a priori estimate and the continuation by parameter argument. Let 
\begin{equation*}
M_{\tau }u=\tau Lu+\left( 1-\tau \right) \partial ^{\alpha }u,\tau \in \left[
0,1\right] .
\end{equation*}%
We introduce the space $\mathcal{H}_{p}^{\alpha }\left( E\right) $ of
functions $u\in H_{p}^{\alpha }(E)$ such that for each, $u\left( t,x\right)
=\int_{0}^{t}F\left( s,x\right) \,ds,$where $F\in H_{p}^{\alpha }\left(
E\right) .$ It is a Banach space with respect to the norm 
\begin{equation*}
||u||_{\alpha ,p}=\left\vert u\right\vert _{\alpha ,p}+\left\vert
F\right\vert _{p}.
\end{equation*}%
Consider the mappings $T_{\tau }:\mathcal{H}_{p}^{\alpha }\left( E\right)
\rightarrow L_{p}(E)$ defined by%
\begin{equation*}
u\left( t,x\right) =\int_{0}^{t}F\left( s,x\right) \,ds\longmapsto F-M_{\tau
}u.
\end{equation*}%
Obviously, for some constant $C$ not depending on $\tau ,$ 
\begin{equation*}
\left\vert T_{\tau }u\right\vert _{p}\leq C|\left\vert u\right\vert
|_{\alpha ,p}.
\end{equation*}%
On the other hand, there is a constant $C$ not depending on $\tau $ such
that for all $u\in \mathcal{H}_{p}^{\alpha }\left( E\right) $%
\begin{equation}
|\left\vert u\right\vert |_{\alpha ,p}\leq C\left\vert T_{\tau }u\right\vert
_{p}.  \label{cp1}
\end{equation}%
Indeed, 
\begin{equation*}
u\left( t,x\right) =\int_{0}^{t}F\left( s,x\right) \,ds=\int_{0}^{t}\left(
M_{\tau }u+(F-M_{\tau }u)\right) (s,x)\,ds,
\end{equation*}%
and, according to Corollary \ref{cor5}, there is a constant $C$ not
depending on $\tau $ such that 
\begin{equation}
\left\vert u\right\vert _{\alpha ,p}\leq C\left\vert T_{\tau }u\right\vert
_{p}=C\left\vert F-M_{\tau }u\right\vert _{p}.  \label{cp2}
\end{equation}%
Thus,%
\begin{eqnarray*}
|\left\vert u|\right\vert _{\alpha ,p} &=&\left\vert u\right\vert _{\alpha
,p}+\left\vert F\right\vert _{p}\leq \left\vert u\right\vert _{\alpha
,p}+\left\vert F-M_{\tau }u\right\vert _{p}+\left\vert M_{\tau }u\right\vert
_{p} \\
&\leq &C\left( \left\vert u\right\vert _{\alpha ,p}+\left\vert F-M_{\tau
}u\right\vert _{p}\right) \leq C\left\vert F-M_{\tau }u\right\vert
_{p}=C\left\vert T_{\tau }u\right\vert _{p},
\end{eqnarray*}%
and (\ref{cp1}) follows. Since $T_{0}$ is an onto map, by Theorem 5.2 in 
\cite{GiT83} all the $T_{\tau }$ are onto maps and Theorem \ref{main}
follows.

\subsection{Proof of Theorem \protect\ref{main2}}

Assume \textbf{A} holds and $p>\frac{d}{\beta }\vee \frac{d}{\alpha },p\geq
2 $ and $u\in \mathfrak{D}_{p}(E)$ satisfies (\ref{eq1}) with $%
B=B^{\varepsilon _{0}}$. By Theorem \ref{main}, 
\begin{eqnarray*}
|\partial _{t}u|_{p}+|u|_{\alpha ,p} &\leqslant &N[|f|_{p}+|B^{\varepsilon
_{0}}u|_{p}], \\
|u|_{p} &\leq &\frac{N}{\lambda }[|f|_{p}+|B^{\varepsilon _{0}}u|_{p}]\text{
if }\lambda \geq \lambda _{1},
\end{eqnarray*}%
where $\lambda _{1}=\lambda _{1}(T,\alpha ,\beta ,d,K,w,\eta )\geq 1$.
According to Lemma \ref{lem0},%
\begin{eqnarray}
|\partial _{t}u|_{p}+|u|_{\alpha ,p} &\leqslant &2N|f|_{p},  \label{f2} \\
|u|_{p} &\leq &\frac{2N}{\lambda }|f|_{p}\text{ if }\lambda \geq \lambda
_{1}.  \notag
\end{eqnarray}

If $u\in \mathfrak{D}_{p}(E)$ satisfies (\ref{eq1}) with $B=B^{\varepsilon
_{0}},\lambda \leq \lambda _{1}$, then $\tilde{u}(t,x)=e^{(\lambda
_{1}-\lambda )t}u(t,x)$ satisfies the same equation with $\lambda _{1}$ with 
$f$ replaced by $e^{(\lambda _{1}-\lambda )t}f$. By (\ref{f2}),%
\begin{equation}
|u|_{\alpha ,p}\leq |\tilde{u}|_{\alpha ,p}\leq 2Ne^{(\lambda _{1}-\lambda
)T}|f|_{p}.  \label{f3}
\end{equation}

The statement follows by the a priori estimates (\ref{f2})-(\ref{f3}) and
the continuation by parameter argument, repeating the proof of Theorem \ref%
{main} for the operators%
\begin{equation*}
M_{\tau }=A+\tau B^{\varepsilon _{0}},0\leq \tau \leq 1.
\end{equation*}

\subsection{Proof of Theorem \protect\ref{main3}}

Again we derive the a priori estimates first and use the continuation by
parameter argument. There is $\varepsilon _{0}\in (0,1)$ such that 
\begin{equation*}
\int_{|y|\leq \varepsilon _{0}}|y|^{\alpha }\pi (t,x,dy)\leq \delta
_{0},(t,x)\in E,
\end{equation*}%
where $\delta _{0}$ is a number in Theorem \ref{main2}. Let $u\in \mathfrak{D%
}_{p}(E)$ satisfy (\ref{eq1}). Let 
\begin{equation*}
\tilde{L}v(t,x)=Av(t,x)+B^{\varepsilon _{0}}v(t,x),
\end{equation*}%
$(t,x)\in E,$ where $B^{\varepsilon _{0}}v$ is defined in (\ref{f0}).
Applying Theorem \ref{main2} to $\tilde{L}$, we have%
\begin{eqnarray*}
|\partial _{t}u|_{p}+|u|_{\alpha ,p} &\leqslant &N_{2}[|f|_{p}|+|(L-\tilde{L}%
)u|_{p}], \\
|u|_{p} &\leq &\frac{N_{2}}{\lambda }[|f|_{p}+(L-\tilde{L})u|_{p}]\text{ if }%
\lambda \geq \lambda _{2}.
\end{eqnarray*}%
There is $\alpha ^{\prime }<\alpha $ such that $p>d/\alpha ^{\prime }$ and
by Sobolev embedding theorem there is a constant $C$ such that%
\begin{equation*}
|(L-\tilde{L})u|_{p}\leq C|u|_{\alpha ^{\prime },p}\left( \int_{0}^{T}\int
\pi (t,x,\left\{ |y|>\varepsilon _{0}\right\} )^{p}dxdt\right) ^{1/p}
\end{equation*}%
By interpolation inequality, for each $\kappa >0$ there is a constant $%
\tilde{N}=\tilde{N}(\kappa ,K_{2},\alpha ,\alpha ^{\prime },p,d)$ such that%
\begin{equation*}
|(L-\tilde{L})u|_{p}\leq \kappa |u|_{\alpha ,p}+\tilde{N}|u|_{p},
\end{equation*}%
where $K_{2}$ is a constant bounding $\int_{0}^{T}\int \pi (t,x,\left\{
|y|>\varepsilon _{0}\right\} )^{p}dxdt$. Choose $\kappa $ so that $2N\kappa
\leq 1/2$. Then%
\begin{eqnarray*}
|\partial _{t}u|_{p}+|u|_{\alpha ,p} &\leqslant &4N[|f|_{p}|+\tilde{N}%
|u|_{p}], \\
|u|_{p} &\leq &\text{ }\frac{4N}{\lambda }[|f|_{p}+\tilde{N}|u|_{p}]\text{
if }\lambda \geq \lambda _{1}.
\end{eqnarray*}%
Choosing $\lambda \geq \lambda _{2}=8N\tilde{N},$ we derive%
\begin{eqnarray*}
|u|_{p} &\leq &\text{ }\frac{8N}{\lambda }|f|_{p}, \\
|\partial _{t}u|_{p}+|u|_{\alpha ,p} &\leqslant &8N|f|_{p}|.
\end{eqnarray*}

Multiplying\bigskip\ $u$ by $e^{(\lambda -\lambda _{2})t}$, we obtain the a
priori estimate for all $\lambda \geq 0$ as in the proof of Corollary \ref%
{cor5} above.

The statement follows by the a priori estimates and the continuation by
parameter argument, repeating the proof of Theorem \ref{main} for the
operators%
\begin{equation*}
M_{\tau }v=\tilde{L}v+\tau (L-\tilde{L})v,0\leq \tau \leq 1.
\end{equation*}

\section{Embedding of the solution space}

Following the main steps of Section 7 in \cite{kry1}, we will show that for
a sufficiently large $p$, the H\"{o}lder norm of the solution is finite.
Since the solution of (\ref{eq1}) $u\in \mathcal{H}_{p}^{\alpha }(E),$ we
will derive an embedding theorem for $\mathcal{H}_{p}^{\alpha }(E)$.

\begin{remark}
\label{rem7}If $u\in \mathcal{H}_{p}^{\alpha }(E)$, then $u\in H_{p}^{\alpha
}(E)$ and\ 
\begin{equation*}
u(t)=\int_{0}^{t}F(s)ds,0\leq t\leq T,
\end{equation*}%
with $F\in L_{p}(E)$. It is the $\mathcal{H}_{p}^{\alpha }$-solution to the
equation 
\begin{eqnarray}
\partial _{t}u &=&\partial ^{\alpha }u+f,  \label{fn0} \\
u(0) &=&0,  \notag
\end{eqnarray}%
where $f=F-\partial ^{\alpha }u\in L_{p}(E)$ with $|f|_{p}\leq
|F|_{p}+|\partial ^{\alpha }u|_{p}\leq ||u||_{\alpha ,p}$. In addition
(e.g., see \cite{mikprag}), 
\begin{equation}
u(t,x)=\int_{0}^{t}G_{t-s}(x-y)f(s,y)dyds,0\leq t\leq T,x\in \mathbf{R}^{d},
\label{fn}
\end{equation}%
where 
\begin{equation}
G_{t}=\mathcal{F}^{-1}\left[ e^{-t|\xi |^{\alpha }}\right] ,t>0,  \label{fn1}
\end{equation}%
(here $\mathcal{F}^{-1}$ is the inverse Fourier transform). The function $%
G_{t}$ is the probability density function of a spherically symmetric $%
\alpha $-stable process whose generator is the fractional Laplacian $%
\partial ^{\alpha }:$%
\begin{equation}
\int G_{t}dx=1,t>0.  \label{fnew}
\end{equation}
\end{remark}

\begin{remark}
\label{rem8}Note that for any multiindex $\gamma \in \mathbf{N}_{0}^{d}$
there is a constant $C=C(\alpha ,\gamma ,d)$ such that%
\begin{equation}
|D_{\xi }^{\gamma }e^{-|\xi |^{\alpha }}|\leq Ce^{-|\xi |^{\alpha
}}\sum_{1\leq k\leq |\gamma |}|\xi |^{k\alpha -|\gamma |}.  \label{fn2}
\end{equation}
\end{remark}

\begin{lemma}
\label{lemn1}Let $K(x)=G_{1}(x),x\in \mathbf{R}^{d}$. Then

(i) $K$ is smooth and for all multiindices $\gamma \in \mathbf{N}%
_{0}^{d},\kappa \in (0,2)$,%
\begin{equation*}
\int |\partial ^{\kappa }D^{\gamma }K(x)|dx<\infty .
\end{equation*}

(ii) for $t>0,x\in \mathbf{R}^{d},$%
\begin{equation*}
G_{t}(x)=t^{-d/\alpha }K(x/t^{1/\alpha })
\end{equation*}%
and for any multiindex $\gamma \in \mathbf{N}_{0}^{d}$ and $\kappa \in
(0,2), $ there is a constant $C$ such that%
\begin{equation*}
|\partial ^{\kappa }D^{\gamma }G_{t}\ast v|_{p}\leq Ct^{-(|\gamma |+\kappa
)/\alpha }|v|_{p},t>0,v\in L_{p}(\mathbf{R}^{d}).
\end{equation*}

(iii) Let $\kappa \in (0,1)$. There is a constant $C$ such that for $v\in 
\mathcal{S}(\mathbf{R}^{d}),t>0,$%
\begin{equation*}
|G_{t}\ast v-v|_{p}\leq Ct^{\kappa }|\partial ^{\alpha \kappa }v|_{p}.
\end{equation*}
\end{lemma}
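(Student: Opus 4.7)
The plan is to prove (i) by Fourier analysis, derive the scaling formula and $L^p$ bound of (ii) as consequences of (i) plus Young's inequality, and obtain (iii) from the semigroup identity $\partial_t G_t=\partial^{\alpha}G_t$ together with a factorization of $\partial^{\alpha}$.

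For (i), $K\in C^{\infty}$ is immediate because $e^{-|\xi|^{\alpha}}$ and all its $\xi$-derivatives have rapid decay at infinity (Remark \ref{rem8}). For integrability of $\partial^{\kappa}D^{\gamma}K$, I would compute its Fourier transform as $c|\xi|^{\kappa}(i\xi)^{\gamma}e^{-|\xi|^{\alpha}}$ (up to the sign built into the multiplier of $\partial^{\kappa}$) and decompose via a smooth cutoff $\phi$ with $\phi=1$ on $\{|\xi|\leq 1\}$ and $\mathrm{supp}\,\phi\subset\{|\xi|\leq 2\}$. The high-frequency piece $(1-\phi)|\xi|^{\kappa}(i\xi)^{\gamma}e^{-|\xi|^{\alpha}}$ lies in $\mathcal{S}(\mathbf{R}^{d})$ by (\ref{fn2}), so its inverse Fourier transform is Schwartz, hence in $L^{1}$. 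For the compactly supported low-frequency piece I would get pointwise decay of its inverse Fourier transform by repeated integration by parts in $\xi$, using that $\kappa+|\gamma|>0$ permits taking enough $\Delta_{\xi}$'s while keeping $|\xi|^{\kappa+|\gamma|-2N}$ locally integrable near $0$, which leads to a bound of the form $C(1+|x|)^{-d-1}$. The main obstacle is precisely this $\xi=0$ singularity of $|\xi|^{\kappa}$: one cannot differentiate $\xi\mapsto|\xi|^{\kappa}$ arbitrarily many times in $\xi$ without destroying local integrability, so the number of integrations by parts must be chosen carefully in terms of $d$, $\kappa$, and $|\gamma|$.

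For (ii), $\hat{G_t}(\xi)=e^{-t|\xi|^{\alpha}}=\hat{K}(t^{1/\alpha}\xi)$ yields $G_t(x)=t^{-d/\alpha}K(x/t^{1/\alpha})$ after inverting the Fourier transform. The homogeneity of $|\xi|^{\kappa}$ and $(i\xi)^{\gamma}$ then produces
\[
\partial^{\kappa}D^{\gamma}G_t(x)=t^{-(d+\kappa+|\gamma|)/\alpha}(\partial^{\kappa}D^{\gamma}K)(x/t^{1/\alpha}),
\]
so that $|\partial^{\kappa}D^{\gamma}G_t|_1=t^{-(\kappa+|\gamma|)/\alpha}|\partial^{\kappa}D^{\gamma}K|_1$, finite by (i). Young's inequality $|w*v|_p\leq|w|_1|v|_p$ applied with $w=\partial^{\kappa}D^{\gamma}G_t$ closes the argument.

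For (iii), for $v\in\mathcal{S}(\mathbf{R}^{d})$ the identity $e^{-t|\xi|^{\alpha}}-1=-\int_0^t|\xi|^{\alpha}e^{-s|\xi|^{\alpha}}\,ds$ on the Fourier side gives, after inversion,
\[
G_t*v-v=\int_0^t G_s*\partial^{\alpha}v\,ds.
\]
I would then decompose $\partial^{\alpha}=\partial^{\alpha(1-\kappa)}\circ\partial^{\alpha\kappa}$ as Fourier multipliers and move the first factor onto $G_s$, so that $G_s*\partial^{\alpha}v=(\partial^{\alpha(1-\kappa)}G_s)*\partial^{\alpha\kappa}v$. Because $\kappa\in(0,1)$ one has $\alpha(1-\kappa)\in(0,\alpha)\subset(0,2)$, and (ii) with $\gamma=0$ and exponent $\alpha(1-\kappa)$ gives $|(\partial^{\alpha(1-\kappa)}G_s)*w|_p\leq Cs^{-(1-\kappa)}|w|_p$. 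Integrating from $0$ to $t$ and using $1-\kappa<1$ so that $\int_0^t s^{-(1-\kappa)}\,ds=t^{\kappa}/\kappa$ yields the bound $|G_t*v-v|_p\leq Ct^{\kappa}|\partial^{\alpha\kappa}v|_p$, completing the plan.
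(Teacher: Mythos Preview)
Your arguments for (ii) and (iii) are correct and coincide with the paper's. Your high-frequency argument in (i) is also fine; the paper handles that piece by a weighted $L^{2}$/Cauchy--Schwarz estimate instead of your Schwartz-class observation, but both routes work.

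The gap is in the low-frequency piece of (i). You propose to bound the inverse Fourier transform of $|\xi|^{\kappa}(i\xi)^{\gamma}\phi(\xi)e^{-|\xi|^{\alpha}}$ by $C(1+|x|)^{-d-1}$ via repeated applications of $\Delta_{\xi}$, choosing $N$ so that $|\xi|^{\kappa+|\gamma|-2N}$ remains locally integrable at the origin. That forces an integer $N$ with $d<2N<d+\kappa+|\gamma|$; more generally, any integer-order integration by parts requires an integer $M$ with $d<M<d+\kappa+|\gamma|$. When $\gamma=0$ and $\kappa\in(0,1]$ this open interval contains no integer, so the scheme cannot close, and the claimed $(1+|x|)^{-d-1}$ bound is in fact too strong (the actual decay is only of order $|x|^{-d-\kappa}$, reflecting the Fourier transform of the homogeneous distribution $|\xi|^{\kappa}$). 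Thus the assertion that ``$\kappa+|\gamma|>0$ permits taking enough $\Delta_{\xi}$'s'' is not correct as stated.

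The paper avoids the $\xi=0$ singularity altogether. It writes the low-frequency part as $K_{1}=K\ast\psi$ with $\psi=\mathcal{F}^{-1}\varphi\in\mathcal{S}(\mathbf{R}^{d})$, moves the derivatives onto $\psi$ so that $\partial^{\kappa}D^{\gamma}K_{1}=K\ast(\partial^{\kappa}D^{\gamma}\psi)$, and then applies Young's inequality together with $|K|_{1}=1$ (equation~(\ref{fnew})). This reduces the question to $\partial^{\kappa}D^{\gamma}\psi\in L^{1}$ for a Schwartz function $\psi$, which follows directly from the integral definition~(\ref{fo4}) of $\partial^{\kappa}$ applied to $D^{\gamma}\psi\in\mathcal{S}$: split the $y$-integral into $|y|\le1$ and $|y|>1$, use a Taylor bound on the near part, and integrate in $x$ by Fubini. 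No Fourier-side singularity is ever confronted. This convolution trick is the idea your proposal is missing.
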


\begin{proof}
(i) For any multiinidex $\gamma \in \mathbf{N}_{0}^{d},$ 
\begin{equation*}
\sup_{x}|D^{\gamma }K(x)|\leq \int |\left( i\xi \right) ^{\gamma }e^{-|\xi
|^{\alpha }}|d\xi <\infty .
\end{equation*}%
Let $\varphi \in C_{0}^{\infty }(\mathbf{R}^{d}),0\leq \varphi \leq
1,\varphi (x)=1$ if $|x|\leq 1,\varphi (x)=0$ if $|x|\geq 2$. Then $%
K(x)=K_{1}(x)+K_{2}(x)$ with%
\begin{equation*}
K_{1}=\mathcal{F}^{-1}\left( e^{-|\xi |^{\alpha }}\varphi (\xi )\right)
,K_{2}=\mathcal{F}^{-1}\left( [1-\varphi (\xi )]e^{-|\xi |^{\alpha }}\right)
.
\end{equation*}%
Since $\psi =\mathcal{F}^{-1}\varphi \in \mathcal{S}(\mathbf{R}^{d})$, we
have $K_{1}(x)=K\ast \psi (x)$. Therefore, by (\ref{fnew}), for any
multiindex $\gamma \in \mathbf{N}_{0}^{d},\kappa \in (0,2),$%
\begin{eqnarray*}
\sup_{x}|\partial ^{\kappa }D^{\gamma }K_{1}(x)| &\leq &\sup_{x}|\partial
^{\kappa }D^{\gamma }\psi (x)|<\infty , \\
\int |\partial ^{\kappa }D^{\gamma }K_{1}(x)|dx &\leq &\int |\partial
^{\kappa }D^{\gamma }\psi (x)|dx<\infty .
\end{eqnarray*}%
By Parseval's equality and (\ref{fn2}), for any multiindices $\gamma ,\mu
,,\kappa \in (0,2),$%
\begin{eqnarray*}
\int |\partial ^{\kappa }D^{\gamma }K_{2}(x)|^{2}dx &=&\int |\left( i\xi
\right) ^{\gamma }|\xi |^{\kappa }\left[ 1-\varphi (\xi )\right] e^{-|\xi
|^{\alpha }}|^{2}d\xi <\infty , \\
\int |(ix)^{\mu }\partial ^{\kappa }D^{\gamma }K_{2}(x)|^{2}dx &=&\int
|D^{\mu }\left( (i\xi )^{\gamma }|\xi |^{\kappa }e^{-|\xi |^{\alpha
}}\right) |^{2}\left[ 1-\varphi (\xi )\right] d\xi <\infty .
\end{eqnarray*}%
Therefore, by Cauchy-Schwartz inequality with $d_{1}=[\frac{d}{4}]+1$,%
\begin{eqnarray*}
\int |\partial ^{\kappa }D^{\gamma }K_{2}(x)|dx &\leq &\left\{ \int
(1+|x|^{2})^{2d_{1}}|D^{\gamma }K_{2}(x)|^{2}dx\right\} ^{1/2}\times \\
&&\times \left\{ \int (1+|x|^{2})^{-2d_{1}}dx\right\} ^{1/2}.
\end{eqnarray*}

(ii) Changing the variable of integration in (\ref{fn1}) we get $%
G_{t}(x)=t^{-d/\alpha }K(x/t^{1/\alpha }),x\in \mathbf{R}^{d},t>0$. For any $%
v\in \mathcal{S}(\mathbf{R}^{d}),\gamma \in \mathbf{N}_{0}^{d},\kappa \in
(0,2),$%
\begin{eqnarray*}
\partial ^{\kappa }D^{\gamma }G_{t}\ast v(x) &=&\int \partial ^{\kappa
}D^{\gamma }G_{t}(x-y)v(y)dy \\
&=&t^{-d/\alpha -(|\gamma |+\kappa )/\alpha }\int \partial ^{\kappa
}D^{\gamma }K((x-y)/t^{1/\alpha })v(y)dy \\
&=&t^{-d/\alpha -(|\gamma |+\kappa )/\alpha }\int \partial ^{\kappa
}D^{\gamma }K(y/t^{1/\alpha })v(x-y)dy
\end{eqnarray*}%
and the statement follows.

(iii) Since for $v\in \mathcal{S}(\mathbf{R}^{d})$%
\begin{equation*}
G_{t}\ast v-v=\int_{0}^{t}\partial ^{\alpha }G_{s}\ast vds,0\leq t,
\end{equation*}%
it follows by part (ii),%
\begin{eqnarray*}
|G_{t}\ast v-v|_{p} &\leq &\int_{0}^{t}|\partial ^{\alpha }G_{s}\ast
v|_{p}ds=\int_{0}^{t}|\partial ^{\alpha (1-\kappa )}G_{s}\ast \partial
^{\alpha \kappa }v|_{p}ds \\
&\leq &\int_{0}^{t}s^{\kappa -1}ds|\partial ^{\alpha \kappa }v|_{p}\leq
Ct^{\kappa }|\partial ^{\alpha \kappa }v|_{p}.
\end{eqnarray*}
\end{proof}

We will need the following embedding estimate as well.

\begin{lemma}
\label{lem74}(see Lemma 7.4 in \cite{kry1}) Let $\mu \in (0,1),\mu p>1,p\geq
1,\kappa \in (0,1]$. Let $h(t)$ be a continuous $H_{p}^{\alpha \kappa }(%
\mathbf{R}^{d})$-valued function. Then there is a constant $C=C(d,\mu )$
such that for $s\leq t,$%
\begin{equation*}
|\partial ^{\alpha (1-\kappa )}[h(t)-h(s)]|_{p}^{p}\leq C(t-s)^{\mu
p-1}\int_{0}^{t-s}\frac{dr}{r^{1+\mu p}}\int_{s}^{t-r}|\partial ^{\alpha
(1-\kappa )}[h(\upsilon +r)-h(\upsilon )]|_{p}^{p}d\upsilon .
\end{equation*}
\end{lemma}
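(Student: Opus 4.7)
Plan. Let $g(\tau) := \partial^{\alpha(1-\kappa)}h(\tau)$, which is a continuous $L_p(\mathbf{R}^d)$-valued function by hypothesis. Write $\Delta_r g(\upsilon) := g(\upsilon + r) - g(\upsilon)$ and $\delta := t - s$. The claim reads
\begin{equation*}
|g(t) - g(s)|_p^p \leq C\delta^{\mu p - 1}\int_0^\delta \frac{dr}{r^{1+\mu p}}\int_s^{t-r}|\Delta_r g(\upsilon)|_p^p\,d\upsilon,
\end{equation*}
and I would follow the standard Besov-type embedding argument as in Krylov's Lemma 7.4 of \cite{kry1}.

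Step 1 (averaging identity). For any $r \in (0,\delta)$ and $\upsilon \in (s,t-r)$, the three-term telescoping
$g(t) - g(s) = [g(\upsilon) - g(s)] + \Delta_r g(\upsilon) + [g(t) - g(\upsilon + r)]$
is integrated in $\upsilon$, and the substitution $w = \upsilon + r$ in the third term gives
\begin{equation*}
(\delta - r)[g(t) - g(s)] = \int_s^{t-r}\Delta_r g(\upsilon)\,d\upsilon + \int_s^{t-r}[g(\upsilon) - g(s)]\,d\upsilon + \int_{s+r}^{t}[g(t) - g(w)]\,dw.
\end{equation*}

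Step 2 ($L_p$-norm and Hölder). Taking $|\cdot|_p$ in $x$, applying Minkowski's integral inequality, raising to the $p$-th power and using Hölder with conjugate exponents $p$ and $p/(p-1)$ transfers the bound to $p$-th powers:
\begin{equation*}
(\delta - r)|g(t) - g(s)|_p^p \leq C\Bigl(\int_s^{t-r}|\Delta_r g(\upsilon)|_p^p\,d\upsilon + \rho(r)\Bigr),
\end{equation*}
where $\rho(r)$ collects the $L_p^p$-versions of the last two remainder integrals, i.e. (after changes of variable) expressions like $\int_0^{\delta-r}|\Delta_\tau g(s)|_p^p\,d\tau$ and $\int_0^{\delta-r}|\Delta_\tau g(t-\tau)|_p^p\,d\tau$.

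Step 3 (absorption of remainders). The quantities in $\rho(r)$ contain pointwise increments of $g$ evaluated at the fixed endpoints $s$ and $t-\tau$, which must be converted to averages in $\upsilon$. I would either iterate the identity of Step~1 dyadically on shorter subintervals, bounding each endpoint increment by the same type of expression at a halved scale and summing the resulting geometric series; or, equivalently, mollify $g$ in the $t$-variable by $g_r := g \ast_\tau \phi_r$ for a smooth bump $\phi_r$ of width $r$, so that $g - g_r$ is an explicit average of $\Delta_{r'} g(\upsilon)$ while $g_r$ is smooth enough to be controlled through its (approximate) derivative. Either route produces a self-improving estimate that closes because the geometric series converges precisely when $\mu p > 1$.

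Step 4 (weighted integration and scaling). Once the remainders are absorbed, multiply by $r^{-\mu p}$ and integrate in $r \in (0,\delta)$; the balance between the $(\delta - r)$ factor from Step~2 and the weight produces the desired $r^{-1-\mu p}$ kernel. A scaling argument $r = \delta\tau$, $\upsilon = s + \delta\sigma$ isolates the $(t-s)^{\mu p - 1}$ prefactor and fixes the constant $C = C(d,\mu)$.

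Main obstacle. The principal technical difficulty is Step~3: the averaging identity in Step~1 naturally produces remainder terms involving $g$-increments at scales comparable to $\delta$ itself — not a priori smaller than the quantity we wish to bound. Either the dyadic iteration or the mollification device is needed to recast these pointwise endpoint increments as averages over $\upsilon$; the convergence of the resulting geometric sum (respectively, the integrability of the mollifier bound) is precisely the role of the hypothesis $\mu p > 1$. Without this condition the self-referential estimate would fail to close, and the scaling exponent $(t-s)^{\mu p - 1}$ would likewise be lost.
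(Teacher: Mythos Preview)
The paper does not prove this lemma at all: it is stated with the parenthetical ``(see Lemma 7.4 in \cite{kry1})'' and used as a black box in the proof of Proposition~\ref{prope1}. So there is no proof in the paper to compare your attempt against.

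As for your sketch itself: the outline is in the correct spirit of the Garsia--Rodemich--Rumsey type argument that underlies Krylov's Lemma~7.4, and Steps~1 and~2 are fine. Your own diagnosis of Step~3 as the ``main obstacle'' is accurate, and that step is not actually carried out --- you only indicate two possible routes (dyadic iteration or temporal mollification) without executing either. Both can be made to work, but the dyadic chaining needs care: after one application of your averaging identity the remainder terms $\int_s^{t-r}|g(\upsilon)-g(s)|_p^p\,d\upsilon$ and $\int_{s+r}^t|g(t)-g(w)|_p^p\,dw$ are increments \emph{over shorter spans} than $[s,t]$, and iterating on those subintervals produces a geometric series whose ratio is controlled by a power of $1/2$ times a factor that is finite precisely when $\mu p>1$. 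You have identified this mechanism correctly but not written it down. Krylov's own proof is a compact version of this chaining; if you want to complete your argument, following his Lemma~7.4 directly is the shortest path.
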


\begin{proposition}
\label{prope1}Assume $p>2,f\in \mathfrak{D}_{p}(E)$, and 
\begin{equation*}
u(t)=\int_{0}^{t}G_{s}\ast f(s)ds,0\leq t\leq T.
\end{equation*}

Let $1-\frac{1}{p}>\kappa \geq \frac{1}{2}$ (note $\frac{1}{p}<1-\kappa \leq 
\frac{1}{2}$). Then there is a constant $C$ such that for all $0\leq s\leq
t\leq T,$%
\begin{equation*}
|\partial ^{\alpha (1-\kappa )}[u(t)-u(s)]|_{p}^{p}\leq C(t-s)^{\kappa
p-1}[|f|_{p}^{p}+|\partial ^{\alpha }u|_{p}^{p}].
\end{equation*}
\end{proposition}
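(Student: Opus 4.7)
\textbf{Proof plan for Proposition \ref{prope1}.} My strategy follows the scheme of \cite{kry1}: combine Lemma \ref{lem74} with the semigroup representation (\ref{fn}) and the $L_p$-estimates for $G_t$ from Lemma \ref{lemn1}. Note first that by Remark \ref{rem7} we have the Duhamel representation $u(t) = \int_0^t G_{t-\sigma} \ast f(\sigma)\,d\sigma$ (the statement should read $G_{t-s}$ rather than $G_s$).

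\textbf{Step 1 (Choose $\mu$ and apply Lemma \ref{lem74}).} Pick $\mu$ with $\frac{1}{p} < \mu < \kappa$; such a $\mu$ exists since $p > 2$ and $\kappa \geq \frac{1}{2}$ force $\frac{1}{p} < \frac{1}{2} \leq \kappa$. Then Lemma \ref{lem74} (applied to $h = u$) reduces the proposition to the estimate
\begin{equation*}
J(r) := \int_s^{t-r} |\partial^{\alpha(1-\kappa)}[u(v+r)-u(v)]|_p^p\,dv \leq C r^{\kappa p}[|f|_p^p + |\partial^\alpha u|_p^p]
\end{equation*}
for $r \in (0,t-s]$, after which the remaining integral $\int_0^{t-s} r^{(\kappa-\mu)p-1}\,dr$ converges (because $\mu < \kappa$) and yields the required $(t-s)^{\kappa p-1}$ prefactor once combined with the factor $(t-s)^{\mu p-1}$ from the lemma.

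\textbf{Step 2 (Semigroup decomposition).} Using $u(t) = \int_0^t G_{t-\sigma} \ast f(\sigma)\,d\sigma$ and the semigroup property $G_{v+r-\sigma} = G_r \ast G_{v-\sigma}$, I would write
\begin{equation*}
u(v+r) - u(v) = \int_v^{v+r} G_{v+r-\sigma} \ast f(\sigma)\,d\sigma + \bigl(G_r \ast u(v) - u(v)\bigr).
\end{equation*}
Then $\partial^{\alpha(1-\kappa)}$ commutes with convolution, and the two pieces are estimated separately. For the first, Lemma \ref{lemn1}(ii) gives $|\partial^{\alpha(1-\kappa)} G_\tau \ast g|_p \leq C\tau^{-(1-\kappa)}|g|_p$, so a Hölder inequality in $\sigma$ with exponent $q = p/(p-1)$ (the required integrability $(1-\kappa)q<1$ is exactly $\kappa p > 1$, which holds since $\kappa p \geq p/2 > 1$) yields
\begin{equation*}
\Bigl|\partial^{\alpha(1-\kappa)} \int_v^{v+r} G_{v+r-\sigma} \ast f(\sigma)\,d\sigma\Bigr|_p^p \leq C r^{\kappa p-1} \int_v^{v+r} |f(\sigma)|_p^p\,d\sigma.
\end{equation*}
For the second, Lemma \ref{lemn1}(iii) applied to $\partial^{\alpha(1-\kappa)} u(v)$ with exponent $\kappa$ yields
\begin{equation*}
|\partial^{\alpha(1-\kappa)}(G_r \ast u(v) - u(v))|_p^p \leq C r^{\kappa p} |\partial^\alpha u(v)|_p^p,
\end{equation*}
using that $\partial^{\alpha\kappa} \partial^{\alpha(1-\kappa)} u = \partial^\alpha u$.

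\textbf{Step 3 (Integrate in $v$).} Adding the two bounds and integrating over $v \in [s,t-r]$, Fubini's theorem on the double integral $\int_s^{t-r}\int_v^{v+r}|f(\sigma)|_p^p\,d\sigma\,dv$ gives a length bound $\leq r|f|_p^p$. Hence
\begin{equation*}
J(r) \leq C r^{\kappa p}|f|_p^p + C r^{\kappa p}|\partial^\alpha u|_p^p,
\end{equation*}
completing the reduction sought in Step 1.

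\textbf{Main obstacle.} The delicate point is the balancing of exponents: one needs $\mu p > 1$ to invoke Lemma \ref{lem74}, and $\mu < \kappa$ for the $r$-integral to converge at $0$, while simultaneously the inner Hölder estimate requires $\kappa p > 1$. The interval $(\tfrac{1}{p},\kappa)$ is nonempty precisely because of the standing assumptions $p > 2$ and $\kappa \geq \tfrac{1}{2}$; this is what keeps the three constraints mutually compatible. The remainder of the argument is bookkeeping.
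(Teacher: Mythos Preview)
Your proof is correct and follows essentially the same route as the paper: apply Lemma~\ref{lem74}, decompose $u(v+r)-u(v)$ via the semigroup identity into a Duhamel tail over $[v,v+r]$ plus $G_r\ast u(v)-u(v)$, and bound the two pieces with Lemma~\ref{lemn1}(ii),(iii) respectively. The only cosmetic difference is in the H\"older step for the Duhamel tail: the paper inserts weights $\tau^{\kappa}\cdot\tau^{-\kappa}$ before H\"older (which is why it needs both $\kappa<1-\tfrac1p$ for the $\tau^{-\kappa q}$ integral and $\kappa\ge\tfrac12$ for the resulting $\tau^{(2\kappa-1)p}$ factor), whereas you first apply the kernel bound and then H\"older on $\tau^{-(1-\kappa)}$, needing only $\kappa>\tfrac1p$. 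Your version is slightly cleaner and in fact does not use the upper restriction $\kappa<1-\tfrac1p$, but the overall architecture is identical.
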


\begin{proof}
We apply Lemma \ref{lem74} to $u(t)=\int_{0}^{t}G_{t-s}\ast f(s)ds,0\leq
t\leq T.$ Since $G_{t+s}=G_{t}\ast G_{s}$, it follows for $v,r\geq 0,$ 
\begin{eqnarray*}
u(\upsilon +r)-u(\upsilon ) &=&\int_{0}^{\upsilon +r}G_{\upsilon +r-\tau
}\ast f(\tau )d\tau -\int_{0}^{\upsilon }G_{\upsilon -\tau }\ast f(\tau
)d\tau \\
&=&\int_{\upsilon }^{\upsilon +r}G_{\upsilon +r-\tau }\ast f(\tau )d\tau \\
&&+\int_{0}^{\upsilon }G_{\upsilon +r-\tau }\ast f(\tau )d\tau
-\int_{0}^{\upsilon }G_{\upsilon -\tau }\ast f(\tau )d\tau \\
&=&\int_{0}^{r}G_{r-\tau }\ast f(\upsilon +\tau )d\tau +G_{r}\ast u(\upsilon
)-u(\upsilon ).
\end{eqnarray*}

By H\"{o}lder inequality and Lemma \ref{lemn1} for $r>0,$%
\begin{eqnarray*}
&&|\partial ^{\alpha (1-\kappa )}\int_{0}^{r}G_{r-\tau }\ast f(\upsilon
+\tau )d\tau |_{p}^{p} \\
&=&|\int_{0}^{r}\upsilon ^{\kappa }\upsilon ^{-\kappa }\partial ^{\alpha
(1-\kappa )}G_{\tau }\ast f(\upsilon +r-\tau )d\tau |_{p}^{p} \\
&\leq &\left( \int_{0}^{r}\upsilon ^{-\kappa q}d\upsilon \right)
^{p/q}\int_{0}^{r}\upsilon ^{\kappa p}|\partial ^{\alpha (1-\kappa )}G_{\tau
}\ast f(\upsilon +r-\tau )|_{p}^{p}d\tau \\
&\leq &r^{(1-\kappa )p-1}\int_{0}^{r}\upsilon ^{(2\kappa -1)p}|f(\upsilon
+r-\tau )|_{p}^{p}d\tau \\
&\leq &r^{(1-\kappa )p-1}r^{(2\kappa -1)p}\int_{0}^{r}|f(\upsilon +r-\tau
)|_{p}^{p}d\tau \\
&=&r^{\kappa p-1}\int_{0}^{r}|f(\upsilon +r-\tau )|_{p}^{p}d\tau .\text{ }
\end{eqnarray*}%
By Lemma \ref{lemn1}, for $r,\upsilon >0,$%
\begin{equation*}
|\partial ^{\alpha (1-\kappa )}[G_{r}\ast u(\upsilon )-u(\upsilon
)]|_{p}^{p}\leq Cr^{p\kappa }|\QTR{sl}{\partial }^{\alpha }u(\upsilon
)|_{p}^{p}.
\end{equation*}%
Therefore for a fixed $\mu \in (0,1/2)$,%
\begin{eqnarray*}
&&\int_{0}^{t-s}\frac{dr}{r^{1+\mu p}}\int_{s}^{t-r}|\partial ^{\alpha
(1-\kappa )}[u(\upsilon +r)-u(\upsilon )]|_{p}^{p}d\upsilon . \\
&\leq &C[\int_{0}^{t-s}\frac{dr}{r^{1+\mu p}}\int_{s}^{t-r}r^{\kappa
p-1}\int_{0}^{r}|f(\upsilon +r-\tau )|_{p}^{p}d\tau d\upsilon \\
&&+\int_{0}^{t-s}\frac{dr}{r^{1+\mu p}}\int_{s}^{t-r}r^{\kappa p}|\partial
^{\alpha }u(\upsilon )|_{p}^{p}d\upsilon ] \\
&\leq &C[\int_{0}^{t-s}\frac{dr}{r^{2+(\mu -\kappa )p}}\int_{0}^{r}\int_{s+%
\tau }^{t-r+\tau }|f(\upsilon )|_{p}^{p}d\upsilon d\tau \\
&&+\int_{0}^{t-s}\frac{dr}{r^{1+(\mu -\kappa )p}}\int_{s}^{t-r}|\partial
^{\alpha }u(\upsilon )|_{p}^{p}d\upsilon ] \\
&\leq &C[|f|_{p}^{p}+|\partial ^{\alpha }u|_{p}^{p}](t-s)^{(\kappa -\mu )p},
\end{eqnarray*}%
and by Lemma \ref{lem74}, applied for $\mu \in (0,1/2),$%
\begin{equation*}
|\partial ^{\alpha (1-\kappa )}[u(t)-u(s)]|_{p}^{p}\leq C(t-s)^{\kappa
p-1}[|f|_{p}^{p}+|\partial ^{\alpha }u|_{p}^{p}].
\end{equation*}
\end{proof}

\begin{corollary}
\label{cor7}Let $u\in \mathcal{H}_{p}^{\alpha },p>2,p>2d/\alpha ,\beta =%
\frac{\alpha }{2}-\frac{d}{p}$. Then there is a H\"{o}lder continuous
modification of $u$ on $E$ and a constant $C$ independent of $u$ such that%
\begin{equation*}
\sup_{s,x}|u(s,x)|+\sup_{s,x\neq x^{\prime }}\frac{|u(s,x)-u(s,x^{\prime })|%
}{|x-x^{\prime }|^{\beta }}\leq C||u||_{\alpha ,p}.
\end{equation*}
\end{corollary}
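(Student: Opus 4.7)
The plan is to combine the temporal regularity estimate of Proposition \ref{prope1} with a standard Sobolev embedding in the spatial variable, using the initial condition $u(0,\cdot)=0$ built into $\mathcal{H}_p^\alpha(E)$ to propagate a uniform-in-$t$ bound on $|u(t,\cdot)|_{\alpha/2,p}$. Since $\beta=\alpha/2-d/p$, choosing the fractional-derivative index $\alpha(1-\kappa)=\alpha/2$ (i.e.\ $\kappa=1/2$) inside Proposition \ref{prope1} exactly matches the embedding threshold required.

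First, by Remark \ref{rem7}, since $u\in \mathcal{H}_p^\alpha(E)$ we may write $u(t)=\int_0^t F(s)\,ds$ with $F\in L_p(E)$, and $u$ solves $\partial_t u=\partial^\alpha u+f$ with $f=F-\partial^\alpha u\in L_p(E)$ and $|f|_p\le \|u\|_{\alpha,p}$, hence $u(t)=\int_0^t G_{t-s}\ast f(s)\,ds$. Although Proposition \ref{prope1} is stated for $f\in \mathfrak{D}_p(E)$, its inequality is $L_p$-continuous in $f$, so by density it extends to $f\in L_p(E)$. Applying it with $\kappa=1/2$ — admissible since $p>2$ forces $1-1/p>1/2$ — yields
\begin{equation*}
|\partial^{\alpha/2}[u(t)-u(s)]|_p\le C(t-s)^{1/2-1/p}\,\|u\|_{\alpha,p},\qquad 0\le s\le t\le T.
\end{equation*}
Taking $s=0$ and using $u(0,\cdot)=0$ gives $\sup_{t\in[0,T]}|\partial^{\alpha/2}u(t)|_p\le CT^{1/2-1/p}\|u\|_{\alpha,p}$. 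For the lower-order piece of the Bessel-potential norm, observe that from $u(t)=\int_0^t F(s)\,ds$ and H\"older's inequality, $\sup_t |u(t)|_p\le T^{1-1/p}|F|_p\le T^{1-1/p}\|u\|_{\alpha,p}$. Combining these and the equivalence $|v|_{\alpha/2,p}\asymp |v|_p+|\partial^{\alpha/2}v|_p$, I conclude
\begin{equation*}
\sup_{t\in[0,T]}|u(t,\cdot)|_{\alpha/2,p}\le C\|u\|_{\alpha,p}.
\end{equation*}

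The hypothesis $p>2d/\alpha$ gives $\alpha/2-d/p=\beta>0$, so the classical Sobolev embedding $H_p^{\alpha/2}(\mathbf{R}^d)\hookrightarrow C^\beta(\mathbf{R}^d)$ applies and produces, for each $t\in[0,T]$,
\begin{equation*}
\sup_x|u(t,x)|+\sup_{x\neq x'}\frac{|u(t,x)-u(t,x')|}{|x-x'|^\beta}\le C\,|u(t,\cdot)|_{\alpha/2,p}\le C\,\|u\|_{\alpha,p}.
\end{equation*}
Finally, the temporal H\"older estimate above shows that $t\mapsto u(t,\cdot)$ is continuous from $[0,T]$ into $H_p^{\alpha/2}(\mathbf{R}^d)$, hence into $C^\beta(\mathbf{R}^d)$, which selects the asserted H\"older-continuous modification of $u$ on $E$ satisfying the stated bound.

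The only mildly delicate points are the density argument to apply Proposition \ref{prope1} beyond $\mathfrak{D}_p(E)$ and recovering the full $|\cdot|_{\alpha/2,p}$ norm (rather than just $|\partial^{\alpha/2}\cdot|_p$) from the time regularity together with the integral formula $u(t)=\int_0^t F(s)\,ds$; everything else is routine given Proposition \ref{prope1} and Sobolev embedding.
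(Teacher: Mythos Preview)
Your proof is correct and follows essentially the same route as the paper: apply Proposition~\ref{prope1} with $\kappa=1/2$ via Remark~\ref{rem7} to obtain $\sup_{t}|u(t,\cdot)|_{\alpha/2,p}\le C\|u\|_{\alpha,p}$, then invoke the Sobolev embedding $H_p^{\alpha/2}(\mathbf{R}^d)\hookrightarrow C^\beta(\mathbf{R}^d)$. You have simply made explicit several points the paper leaves implicit (the density extension of Proposition~\ref{prope1} to $f\in L_p(E)$, the use of $u(0)=0$, and the separate bound on $\sup_t|u(t)|_p$ needed to recover the full Bessel norm).
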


\begin{proof}
By Proposition \ref{prope1} with $\kappa =1/2$ and Remark \ref{rem7}, $u$ is
H\"{o}lder continuous and%
\begin{equation*}
\sup_{0\leq s\leq T}|u(s,\cdot )|_{\alpha /2,p}\leq C||u||_{\alpha ,p}.
\end{equation*}%
By Sobolev embedding theorem, there is a constant $C$ such that%
\begin{eqnarray*}
&&\sup_{0\leq s\leq T}|u(s,x)|+\sup_{s,x\neq x^{\prime }}\frac{%
|u(s,x)-u(s,x^{\prime })|}{|x-x^{\prime }|^{\beta }} \\
&\leq &\sup_{0\leq s\leq T}|u(s,\cdot )|_{\alpha /2,p}\leq C||u||_{\alpha
,p}.
\end{eqnarray*}
\end{proof}

\section{Martingale problem}

In this section, we consider the martingale problem associated with the
operator%
\begin{equation*}
L=A+B\text{.}
\end{equation*}

Let $D=D([0,T],\mathbf{R}^{d})$ be the Skorokhod space of cadlag $\mathbf{R}%
^{d}$-valued trajectories and let $X_{t}=X_{t}(w)=w_{t},w\in D,$ be the
canonical process on it.

Let%
\begin{equation*}
\mathcal{D}_{t}=\sigma (X_{s},s\leq t),\mathcal{D}=\vee _{t}\mathcal{D}_{t},%
\mathbb{D=}\left( \mathcal{D}_{t+}\right) ,t\in \lbrack 0,T].
\end{equation*}%
We say that a probability measure $\mathbf{P}$ on $\left( D,\mathcal{D}%
\right) $ is a solution to the $(s,x,L)$-martingale problem (see \cite{st}, 
\cite{MiP923}) if $\mathbf{P}(X_{r}=x,0\leq r\leq s)=1$ and for all $v\in
C_{0}^{\infty }(\mathbf{R}^{d})$ the process%
\begin{equation}
M_{t}(v)=v(X_{t})-\int_{s}^{t}Lv(r,X_{r})]dr  \label{mart1}
\end{equation}%
is a $(\mathbb{D},\mathbf{P})$-martingale. We denote $S(s,x,L)$ the set of
all solutions to the problem $(s,x,L)$-martingale problem.

A modification of Theorem 5 in \cite{MiP923} is the following statement.

\begin{proposition}
\label{prop3}Let Assumptions A and B(i)-(ii) hold. Then for each $(s,x)\in E$
there is a unique solution $\mathbf{P}_{s,x}$ to the martingale problem $%
(s,x,L),$ and the process $\left( X_{t},\mathbb{D},(\mathbf{P}_{s,x})\right) 
$ is strong Markov.

If, in addition,%
\begin{equation}
\lim_{l\rightarrow \infty }\int_{0}^{T}\sup_{x}\pi (t,x,\left\{ |\upsilon
|>l\right\} )dt=0,  \label{mart3}
\end{equation}%
then the function $\mathbf{P}_{s,x}$ is weakly continuous in $(s,x)$.
\end{proposition}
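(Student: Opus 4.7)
The proof follows the Stroock--Varadhan program in the form adapted to non-local generators in \cite{MiP923}, with the uniqueness step upgraded by Theorem \ref{main3} so as to accommodate the merely measurable lower-order coefficients. The argument splits into existence via approximation and tightness, uniqueness via the solvability of the Cauchy problem in $\mathcal{H}_p^\alpha(E)$, and then the Markov property and weak continuity as standard consequences.

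\emph{Existence.} For each $\varepsilon>0$ I would regularize $m$ and $b$ by mollification in $x$ and replace $\pi(t,x,dy)$ by $\pi^\varepsilon(t,x,dy)=\mathbf{1}_{|y|>\varepsilon}\pi(t,x,dy)$, preserving Assumptions \textbf{A} and \textbf{B}. For the regularized generator $L^\varepsilon$ the existence of a martingale solution $\mathbf{P}^\varepsilon_{s,x}$ is classical (construct the process as the weak solution of a jump SDE driven by a Poisson random measure). Assumption \textbf{B}(i) gives a uniform bound on $\mathbf{E}|X_t - X_s|^\alpha \wedge 1$ and thus an Aldous-type modulus of continuity estimate uniform in $\varepsilon$, yielding tightness of $(\mathbf{P}^\varepsilon_{s,x})$ in $D([0,T],\mathbf{R}^d)$. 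Assumption \textbf{B}(ii) guarantees that for $v\in C_0^\infty(\mathbf{R}^d)$ the compensator $\int_{|y|\le\varepsilon}[v(x+y)-v(x)-(\nabla v(x),y)\mathbf{1}_{\alpha>1}]\pi(t,x,dy)$ tends to zero uniformly as $\varepsilon\to 0$, so any weak limit $\mathbf{P}_{s,x}$ satisfies (\ref{mart1}).

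\emph{Uniqueness.} Fix $(s,x)$ and let $\mathbf{P},\mathbf{P}'\in S(s,x,L)$. For arbitrary $f\in C_0^\infty((s,T)\times\mathbf{R}^d)$ and $\lambda\ge 0$, after the time reversal $\tilde u(t,x)=u(T-t,x)$ Theorem \ref{main3} yields a unique $u\in\mathcal{H}_p^\alpha(E)$ with $p>d/\beta\vee d/\alpha\vee 2$ solving the backward problem $\partial_t u+(L-\lambda)u=-f,\; u(T,\cdot)=0$, and by Corollary \ref{cor7} this $u$ is bounded and Hölder continuous. The plan is to establish the Dynkin--type identity
\begin{equation*}
\mathbf{E}^{\mathbf{P}}\!\left[\int_s^T e^{-\lambda(r-s)}f(r,X_r)\,dr\right]=u(s,x),
\end{equation*}
and the same under $\mathbf{P}'$, which forces the one-dimensional distributions of $\int_s^T f(r,X_r)\,dr$ to agree; varying $f$ gives $\mathbf{P}=\mathbf{P}'$ on $\mathcal{D}$ by a monotone class argument. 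For smooth $u$ this follows from (\ref{mart1}) and Itô's formula; the main obstacle is that $u\in\mathcal{H}_p^\alpha$ is only Hölder continuous. I would mollify $u$ in $x$, apply Itô's formula to the smooth approximant, and pass to the limit using a Krylov-type occupation-time bound
\begin{equation*}
\mathbf{E}^{\mathbf{P}}\int_s^T |g(r,X_r)|\,dr\le C|g|_p,\qquad g\in L_p(E),
\end{equation*}
valid for every martingale solution. The Krylov bound itself is obtained by duality from Theorem \ref{main3} exactly as in \cite{kry}: given $g$, solve $\partial_t v+(L-\lambda)v=-|g|$ and use the supermartingale property of $v(t,X_t)$ to dominate the expectation.

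\emph{Strong Markov and continuity.} With uniqueness in hand, the strong Markov property is a standard consequence via regular conditional probabilities (as in \cite{MiP923}). For the last assertion, condition (\ref{mart3}) together with \textbf{B}(ii) upgrades the tightness estimate above to be uniform in $(s,x)$: if $(s_n,x_n)\to(s,x)$ then the tightness of $\mathbf{P}_{s_n,x_n}$ follows from the same moment bounds, and any weak limit solves the $(s,x,L)$-martingale problem (the large-jump compensator depends continuously on $(s,x)$ in distribution thanks to (\ref{mart3})), hence equals $\mathbf{P}_{s,x}$ by uniqueness. The principal technical difficulty throughout is the Itô/Krylov machinery required to justify the Dynkin identity for Sobolev-regular $u$; once that is in place the rest is a fairly mechanical adaptation of \cite{MiP923}.
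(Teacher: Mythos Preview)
Your overall Stroock--Varadhan outline is reasonable, but two steps do not go through under the hypotheses of the proposition.

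\textbf{Existence without \eqref{mart3}.} Your tightness argument relies on Assumption \textbf{B}(i) alone, via an Aldous-type modulus estimate. But \textbf{B}(i) only bounds $\int(|v|^\alpha\wedge 1)\pi(t,x,dv)$; it gives no decay of $\pi(t,x,\{|v|>l\})$ as $l\to\infty$, so the one-dimensional marginals of the approximating family need not be tight and the weak-limit argument fails. The paper handles this by a genuinely different construction: it splits $L=\tilde L+\tilde B$ where $\tilde L$ carries only jumps $|v|<1$, solves the $\tilde L$-problem (where tightness is available), and then \emph{interlaces} the large jumps one at a time using an auxiliary Poisson random measure, gluing the pieces via the stopping times $\tau_n$ at which a jump $|v|\ge 1$ occurs. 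The bound $\pi(t,x,\{|v|\ge 1\})\le K$ from \textbf{B}(i) ensures $\mathbf{P}(\tau_n<T)\to 0$, so the construction closes. Your sketch lacks this piecing-together step.

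\textbf{Uniqueness and Theorem \ref{main3}.} You invoke Theorem \ref{main3} to solve $\partial_t u+(L-\lambda)u=-f$, but Theorem \ref{main3} requires Assumption \textbf{B}(iii), i.e.\ $\int_0^T\!\!\int\pi(t,x,\{|v|>\varepsilon\})\,dx\,dt<\infty$, which is \emph{not} assumed in Proposition \ref{prop3}. The paper circumvents this by localizing: it first replaces $\pi$ by $\chi_{\{|x|\le R\}}\pi$ (forcing \textbf{B}(iii)), obtains the Krylov estimate (Lemma \ref{lemn2}) and hence uniqueness for the truncated operator (Lemma \ref{lemn3}), and then passes from local to global uniqueness via Theorem 1.6(b) of \cite{MiP923}. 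A further subtlety you gloss over is that the Krylov estimate cannot be obtained simply by ``solve the PDE and apply It\^o'', since the solution $u\in\mathcal H_p^\alpha$ is not $C^2$ and the commutator between $L$ and a mollifier must be controlled; the paper does this by introducing a $\mathbf P$-averaged lower-order operator $B^{\varepsilon_0,\delta}$ and using Theorem \ref{main2} (not Theorem \ref{main3}) together with the H\"older modulus $w$ of $m$ in $x$ to kill the remainder.
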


\subsection{Auxiliary results}

We will need the following $L_{p}$-estimate.

\begin{lemma}
\label{lemn2}(cf. Lemma 3.6 in \cite{MiP923}) Let Assumptions A and
B(i)-(ii) hold. Let $p>\frac{d}{\beta }\vee \frac{2d}{\alpha }\vee 2,$ $%
(s_{0},x_{0})\in E,\mathbf{P}\in S(s_{0},x_{0},L)$.

Then there is a constant $C=C(R,T,K,\eta ,\beta ,w,p)$ such that for any $%
f\in C_{0}^{\infty }(E),$%
\begin{equation*}
\mathbf{P}\int_{s_{0}}^{\tau }f(r,X_{r})dr\leq C|f|_{p}.
\end{equation*}
\end{lemma}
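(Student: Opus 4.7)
The plan is to reduce the stated Krylov-type bound on occupation times of $X$ to the $L_p$-solvability result of Theorem \ref{main3} combined with the embedding of Corollary \ref{cor7}, via the martingale property applied to a suitable backward-PDE solution.

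\textbf{Backward equation.} For the given $f \in C_0^\infty(E)$, time-reverse: write $\tilde g(s, x) := g(T - s, x)$ and apply Theorem \ref{main3} to obtain $\tilde u \in \mathcal{H}_p^\alpha(E)$ solving $\partial_s \tilde u = \tilde L \tilde u + \tilde f$ on $E$ with $\tilde u(0, \cdot) = 0$ and $|\tilde u|_{\alpha, p} + |\partial_s \tilde u|_p \leq N_3 |f|_p$. Setting $v(t, x) := \tilde u(T - t, x)$ yields $\partial_t v + Lv = -f$ on $E$ with $v(T, \cdot) = 0$. Since $p > \tfrac{2d}{\alpha} \vee 2$, Corollary \ref{cor7} gives $\sup_{(t,x) \in E} |v(t, x)| \leq C |f|_p$.

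\textbf{Martingale argument.} If $v$ were smooth, a tensorization of (\ref{mart1}) with $\psi \in C^1([0,T])$ would give that $v(t, X_t) + \int_{s_0}^t f(r, X_r)\,dr$ is a $(\mathbb{D}, \mathbf{P})$-martingale on $[s_0, T]$; evaluating at $\tau$ and taking expectation,
\begin{equation*}
\mathbf{P} \int_{s_0}^\tau f(r, X_r)\,dr \;=\; v(s_0, x_0) - \mathbf{P}\bigl[v(\tau, X_\tau)\bigr] \;\leq\; 2 \sup_E |v| \;\leq\; C|f|_p,
\end{equation*}
which is the desired estimate. Since $v \in \mathcal{H}_p^\alpha(E)$ is not smooth, approximate by spatial mollification $v_\varepsilon(t, x) := v(t, \cdot) \ast \rho_\varepsilon(x)$: each $v_\varepsilon(t, \cdot) \in C_b^\infty(\mathbf{R}^d)$ and is absolutely continuous in $t$, and the martingale identity above holds rigorously with $v$ replaced by $v_\varepsilon$ and $-f$ replaced by $\partial_t v_\varepsilon + L v_\varepsilon$.

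\textbf{Main obstacle.} The difficulty is the passage $\varepsilon \to 0$. Corollary \ref{cor4} and Lemma \ref{lem0} give $L v_\varepsilon \to L v$ and $\partial_t v_\varepsilon \to \partial_t v$ in $L_p(E)$, so $\partial_t v_\varepsilon + L v_\varepsilon \to -f$ in $L_p(E)$; however, converting $L_p$-convergence of integrands into convergence of $\mathbf{P}\int_{s_0}^\tau (\cdot)(r, X_r)\,dr$ requires precisely a Krylov-type bound of the form being proved. As in the proof of Lemma 3.6 of \cite{MiP923}, one resolves this by a bootstrap: first obtain a preliminary crude bound $\mathbf{P}\int_{s_0}^\tau |g(r, X_r)|\,dr \leq C(R)|g|_p$ for $g \in C_0^\infty(E)$ supported in $[0,T] \times B_R$, using Assumption B(ii)--(iii) to truncate small and large jumps and then standard compensator/comparison estimates on the resulting process (which has a finite Lévy measure and hence admits an elementary bound). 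Applying this crude bound to $g_\varepsilon := \partial_t v_\varepsilon + L v_\varepsilon + f \to 0$ in $L_p(E)$ justifies the limit passage and yields the sharp constant, whose dependence $C(R, T, K, \eta, \beta, w, p)$ traces through $N_3$ from Theorem \ref{main3} and the embedding constant from Corollary \ref{cor7}.
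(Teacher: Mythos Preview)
Your approach has two genuine gaps.

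\textbf{First}, you invoke Theorem \ref{main3} to solve the backward equation $\partial_t v + Lv = -f$, but Theorem \ref{main3} requires Assumption B(iii), which is \emph{not} among the hypotheses of this lemma (only A and B(i)--(ii) are assumed). Under B(i)--(ii) alone, solvability of the full equation in $\mathcal{H}_p^\alpha$ is not available; this is why the paper works instead with the truncated operator $\tilde L = A + B^{\varepsilon_0}$ and Theorem \ref{main2}.

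\textbf{Second}, and more seriously, the bootstrap you sketch is circular. You correctly identify that passing from $L_p$-convergence of $\partial_t v_\varepsilon + L v_\varepsilon$ to convergence of $\mathbf{P}\int(\cdot)(r,X_r)\,dr$ requires a Krylov-type bound --- but the ``preliminary crude bound'' you propose does not exist in this setting. Truncating the jump part of $B$ does nothing for the principal part $A$, whose L\'evy measure is infinite and whose coefficients are merely H\"older in $x$; there is no elementary occupation-time estimate for the martingale solution of such an operator. You also appeal to B(iii), again not assumed. (The argument you describe is in fact exactly how the paper proves Corollary \ref{cor8} --- but only \emph{after} the present lemma is established.)

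The paper breaks the circularity by a device you are missing entirely. It introduces an auxiliary operator $\tilde L^\delta = A + B^{\varepsilon_0,\delta}$ whose lower-order coefficients depend on the unknown measure $\mathbf{P}$ itself, via
\[
B^{\varepsilon_0,\delta}v(t,x)=\frac{\mathbf{P}\bigl[\zeta_\delta^p(X_t-x)\,B^{\varepsilon_0}_{t,X_t}v(x)\bigr]}{\mathbf{P}\zeta_\delta^p(X_t-x)} .
\]
This operator still satisfies the hypotheses of Theorem \ref{main2} uniformly in $\delta$, so one gets a solution $u^\delta$ with $\|u^\delta\|_{\alpha,p}\le C|f|_p$. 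The point of this choice is that when one applies It\^o's formula to the mollified function $u^\delta_\delta$, the $B^{\varepsilon_0}$-commutator vanishes \emph{identically}, and the only remainder $R_2$ comes from the principal part $A$, namely from $m(r,X_r,y)-m(r,z,y)$ on the set $|X_r-z|\le\delta$. Assumption A(ii) then forces $R_2\to 0$ as $\delta\to 0$, with no Krylov bound needed. The large-jump piece $Ru^\delta_\delta$ is handled directly by the sup-bound (Corollary \ref{cor7}) and B(i).
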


\begin{proof}
Let $\zeta \in C_{0}^{\infty }(\mathbf{R}^{d}),\zeta \geq 0,\zeta (x)=\zeta
(|x|),\zeta (x)=0$ if $|x|\geq 1$, and $\int \zeta ^{p}dx=1$. For $\delta >0$
denote $\zeta _{\delta }(x)=\varepsilon ^{-d/p}\zeta (x/\delta ),x\in 
\mathbf{R}^{d}.$ Let%
\begin{eqnarray*}
u_{\delta }(t,x) &=&\int u(t,x-y)\zeta _{\delta }^{p}(y)dy \\
&=&\int u(t,y)\zeta _{\delta }^{p}(x-y)dy,(t,x)\in E.
\end{eqnarray*}%
Let 
\begin{equation*}
\tilde{L}v=Av+B^{\varepsilon _{0}}v,
\end{equation*}%
where $B^{\varepsilon _{0}}$ is defined by (\ref{f0}) with $\varepsilon _{0}$
so that the assumptions of Theorem \ref{main2} hold. Then%
\begin{equation*}
Lv=\tilde{L}v+Rv
\end{equation*}%
with 
\begin{equation*}
Rv(t,x)=\int_{|y|>\varepsilon _{0}}[v(x+y)-v(x)]\pi (t,x,dy).
\end{equation*}%
Define%
\begin{equation*}
\tilde{L}^{\delta }v=Av+B^{\varepsilon _{0},\delta }v,
\end{equation*}%
where%
\begin{equation*}
B^{\varepsilon _{0},\delta }v(t,x)=\frac{\mathbf{P[}\zeta _{\delta
}^{p}(X_{t}-x)B_{t,X_{t}}^{\varepsilon _{0}}v(x)]}{\mathbf{P}\zeta _{\delta
}^{p}(X_{t}-x)}
\end{equation*}%
(here we assume $\frac{0}{0}=0$). Since for $\tilde{L}^{\delta }$ the
assumptions of Theorem \ref{main2} hold uniformly in $\delta $, there is $%
u=u^{\delta }\in \mathcal{H}_{p}^{\alpha }(E)$ solving%
\begin{eqnarray*}
\partial _{t}u(t,x)+\tilde{L}^{\delta }u(t,x) &=&f(t,x),(t,x)\in E, \\
u(T,x) &=&0,x\in \mathbf{R}^{d}.
\end{eqnarray*}%
Moreover, there is a constant $C$ independent of $\delta $ such that 
\begin{equation}
||u^{\delta }||_{\alpha ,p}\leq C|f|_{p}.  \label{form0}
\end{equation}%
In addition, by Corollary \ref{cor7} and (\ref{form0}), there is a constant
independent of $\delta $ such that%
\begin{equation}
\sup_{s,x}|u^{\delta }(s,x)|\leq C|f|_{p}.  \label{form00}
\end{equation}%
Applying Ito formula to $u_{\delta }^{\delta }(t,x)=\int \zeta _{\delta
}^{p}(x-z)u^{\delta }(t,z)dz=\int \zeta _{\delta }^{p}(z)u^{\delta
}(t,x-z)dz,$ we have%
\begin{eqnarray}
-u_{\delta }^{\delta }(s_{0},x_{0}) &=&\int_{s_{0}}^{T}[\partial
_{t}u^{\delta }(r,z)+(A+B^{\varepsilon _{0},\delta })u^{\delta }(r,z)]\kappa
_{\delta }(t,z)dz  \notag \\
&&+\mathbf{P}\int_{s_{0}}^{T}Ru_{\delta }^{\delta
}(r,X_{r})dr+\int_{s_{0}}^{T}R_{2}(r)dr  \label{form2} \\
&=&\int_{s_{0}}^{T}\mathbf{P}f_{\delta }(r,X_{r})dr+\mathbf{P}%
\int_{s_{0}}^{T}Ru_{\delta }^{\delta }(r,X_{r})dr+\int_{s_{0}}^{T}R_{2}(r)dr,
\notag
\end{eqnarray}%
where $\kappa _{\delta }(t,z)=\mathbf{P}\zeta _{\delta }^{p}(X_{t}-z)$ and 
\begin{eqnarray*}
R_{2}(r) &=&\int \mathbf{P[}\zeta _{\delta
}^{p}(X_{r}-z)A_{r,X_{r}}u^{\delta }(r,z)-\zeta _{\delta
}^{p}(X_{r}-z)A_{r,z}u^{\delta }(r,z)]dz \\
&=&\mathbf{P}\int \int \nabla _{y}^{\alpha }u^{\delta
}(r,z)[m(r,X_{r},y)-m(r,z,y)]\frac{dy}{|y|^{d+\alpha }}\zeta _{\delta
}^{p}(X_{r}-z)dz,
\end{eqnarray*}

$s_{0}\leq r\leq T$. By H\"{o}lder inequality, for any $r\in \lbrack
s_{0},T],$%
\begin{equation*}
|R_{2}(r)|^{p}\leq \mathbf{P}\int \left\vert \int \nabla _{y}^{\alpha
}u^{\delta }(r,z)[m(r,X_{r},y)-m(r,z,y)]\zeta _{\delta }(X_{r}-z)\frac{dy}{%
|y|^{d+\alpha }}\right\vert ^{p}dz.
\end{equation*}%
We will show that%
\begin{equation}
\int_{s_{0}}^{T}|R_{2}(r)|^{p}dr\rightarrow 0\text{ as }\delta \rightarrow 0.
\label{form1}
\end{equation}%
According to Lemma \ref{rem5},%
\begin{equation*}
|R_{2}(r)|^{p}\leq C|\partial ^{\alpha }u^{\delta }(r)|_{p}^{p}\mathbf{P}%
\int \sup_{y}[|M(r,z,y)|^{p}+|\partial ^{\beta }M(r,z,y)|^{p}]dz,
\end{equation*}%
where%
\begin{equation*}
M(r,z,y)=[m(r,X_{r},y)-m(r,z,y)]\zeta _{\delta }(X_{r}-z).
\end{equation*}%
Obviously,%
\begin{equation*}
\left\vert m(r,X_{r},y)-m(r,z,y)\right\vert \zeta _{\delta }(X_{r}-z)\leq
w(\delta )\zeta _{\delta }(X_{r}-z)
\end{equation*}%
and%
\begin{equation*}
\int \sup_{y}[|M(r,z,y)|^{p}dz\leq \int w(\delta )^{p}\zeta _{\delta
}^{p}(X_{r}-z)dz=w(\delta )^{p}.
\end{equation*}%
Denoting $m_{0}(r,z,y)=m(r,X_{r},y)-m(r,z,y)$, we have%
\begin{eqnarray*}
&&|\partial ^{\beta }\left( m_{0}(r,z,y)\zeta _{\delta }(z-X_{r})\right) | \\
&\leq &\int_{|\upsilon |>\delta }\left\vert m_{0}(t,z+\upsilon ,y)\zeta
_{\delta }(z+v-X_{r})-m_{0}(t,z,y)\zeta _{\delta }(z-X_{r})\right\vert \frac{%
d\upsilon }{|\upsilon |^{1+\beta }} \\
&&+\zeta _{\delta }(z-X_{r})\int_{|\upsilon |\leq \delta }|m(r,z+\upsilon
,y)-m(r,z,y)|\frac{d\upsilon }{|\upsilon |^{d+\beta }} \\
&&+|m_{0}(t,z,y)|\int_{|\upsilon |\leq \delta }\left\vert \zeta _{\delta
}(z+\upsilon -X_{r})-\zeta _{\delta }(z-X_{r})\right\vert \frac{d\upsilon }{%
|\upsilon |^{d+\beta }} \\
&&+\int_{|\upsilon |\leq \delta }\left\vert \zeta _{\delta }(z+\upsilon
-X_{r})-\zeta _{\delta }(z-X_{r})\right\vert \left\vert m(r,z+\upsilon
,y)-m(r,z,y)\right\vert \frac{d\upsilon }{|\upsilon |^{d+\beta }} \\
&=&(H_{1}+H_{2}+H_{3}+H_{4})(r,z,y),(r,z,y)\in \lbrack 0,T]\times \mathbf{R}%
^{d}\times \mathbf{R}^{d}.
\end{eqnarray*}%
Obviously, 
\begin{equation*}
\int \sup_{y}H_{2}(r,z,y)^{p}dz\leq (\int_{|\upsilon |\leq \delta
}w(\upsilon )\frac{d\upsilon }{|\upsilon |^{d+\beta }})^{p}.
\end{equation*}%
It follows, by H\"{o}lder inequality,%
\begin{eqnarray*}
\int \sup_{y}H_{1}(r,z,y)^{p}dz &\leq &Cw(\delta )^{p}\delta ^{-p\beta }, \\
\int \sup_{y}H_{4}(r,z,y)^{p}dz &\leq &C(\int_{|\upsilon |\leq \delta
}w(\upsilon )\frac{d\upsilon }{|\upsilon |^{d+\beta }})^{p}.
\end{eqnarray*}%
Changing the variable of integration,%
\begin{eqnarray*}
H_{3}(r,z,y) &\leq &w(2\delta )\int_{|\upsilon |\leq \delta }\left\vert
\zeta _{\delta }(z+\upsilon -X_{r})-\zeta _{\delta }(z-X_{r})\right\vert 
\frac{d\upsilon }{|\upsilon |^{d+\beta }} \\
&=&w(2\delta )\delta ^{-\beta }\delta ^{-d/p}\int_{|\upsilon |\leq
1}\left\vert \zeta (\frac{z-X_{r}}{\delta }+\upsilon )-\zeta (\frac{z-X_{r}}{%
\delta })\right\vert \frac{d\upsilon }{|\upsilon |^{d+\beta }}
\end{eqnarray*}%
and%
\begin{eqnarray*}
&&\int \sup_{y}H_{3}(r,z,y)^{p}dz \\
&\leq &w(2\delta )^{p}\delta ^{-\beta p}\int (\int_{|\upsilon |\leq
1}\left\vert \zeta (z+\upsilon )-\zeta (z)\right\vert \frac{d\upsilon }{%
|\upsilon |^{d+\beta }})^{p}dz \\
&\leq &Cw(2\delta )^{p}\delta ^{-\beta p}.
\end{eqnarray*}%
Therefore by (\ref{form0}), (\ref{form1}) follows. Since 
\begin{equation*}
\int_{s_{0}}^{T}\mathbf{P}f_{\delta }(r,X_{r})dr=-\left( u_{\delta }^{\delta
}(s_{0},x_{0})+\mathbf{P}\int_{s_{0}}^{T}Ru_{\delta }^{\delta
}(r,X_{r})dr+\int_{s_{0}}^{T}R_{2}(r)dr\right)
\end{equation*}%
(see (\ref{form2})), the statement follows by (\ref{form00}) and (\ref{form1}%
) passing to the limit as $\delta \rightarrow 0.$
\end{proof}

\begin{corollary}
\label{cor8}Let Assumptions A and B hold, $(s_{0},x_{0})\in E$. Then the set 
$S(s_{0},x_{0},L)$ consists of at most one probability measure.
\end{corollary}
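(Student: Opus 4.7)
The plan is to promote the one-sided estimate of Lemma \ref{lemn2} to an identification: for every $f\in C_{0}^{\infty}(E)$, the quantity $\mathbf{P}\int_{s_{0}}^{T}f(r,X_{r})\,dr$ is the same for every $\mathbf{P}\in S(s_{0},x_{0},L)$, and then to upgrade equality of occupation-time functionals to equality of the measures via a Stroock--Varadhan-type argument.

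First, given $f\in C_{0}^{\infty}(E)$, I would invoke Theorem \ref{main3} (applied to the backward equation, obtained by time reversal from (\ref{eq1})) to produce a solution $u\in \mathcal{H}_{p}^{\alpha}(E)$ of
\begin{equation*}
\partial_{t}u(t,x)+Lu(t,x)=f(t,x),\qquad u(T,x)=0,
\end{equation*}
satisfying $||u||_{\alpha,p}\leq C|f|_{p}$. By Corollary \ref{cor7} the function $u$ is bounded and H\"older in $x$ (since the hypothesis $p>\tfrac{d}{\beta}\vee\tfrac{2d}{\alpha}\vee 2$ in Lemma \ref{lemn2} is exactly what is needed). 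Next I would mollify $u$ by $u_{\delta}(t,x)=\int u(t,x-y)\zeta_{\delta}^{p}(y)\,dy$ as in the proof of Lemma \ref{lemn2}: $u_{\delta}$ is smooth in $x$ and absolutely continuous in $t$, so the defining property (\ref{mart1}) of a solution to the $(s_{0},x_{0},L)$-martingale problem, applied to $u_{\delta}(t,\cdot)$ and integrated in $t$, yields an It\^o-type identity
\begin{equation*}
-u_{\delta}(s_{0},x_{0})=\mathbf{P}\int_{s_{0}}^{T}[\partial_{t}u_{\delta}+Lu_{\delta}](r,X_{r})\,dr.
\end{equation*}
The same commutator computation performed in Lemma \ref{lemn2} (the decomposition of $R_{2}$ into the four terms $H_{1},\ldots,H_{4}$ that are controlled by the modulus $w$) shows that all discrepancies between $\partial_{t}u_{\delta}+Lu_{\delta}$ and the mollification of $f$ vanish in $L_{p}$ as $\delta\downarrow0$. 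Passing to the limit therefore gives the exact identity
\begin{equation*}
-u(s_{0},x_{0})=\mathbf{P}\int_{s_{0}}^{T}f(r,X_{r})\,dr,
\end{equation*}
whose right-hand side is now independent of $\mathbf{P}\in S(s_{0},x_{0},L)$.

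With this identification in hand I would specialize $f(r,x)=\varphi(r)g(x)$ for $\varphi\in C_{0}^{\infty}((s_{0},T))$ and $g\in C_{0}^{\infty}(\mathbf{R}^{d})$. Any two solutions $\mathbf{P}_{1},\mathbf{P}_{2}\in S(s_{0},x_{0},L)$ then satisfy $\int_{s_{0}}^{T}\varphi(r)[\mathbf{P}_{1}g(X_{r})-\mathbf{P}_{2}g(X_{r})]\,dr=0$, whence $\mathbf{P}_{1}g(X_{r})=\mathbf{P}_{2}g(X_{r})$ for a.e.\ $r$; right continuity of $r\mapsto \mathbf{P}_{i}g(X_{r})$, inherited from the cadlag property of $X$, extends this to every $r\in[s_{0},T]$. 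Hence the one-dimensional marginals of any element of $S(s_{0},x_{0},L)$ coincide. Since all of the above also applies verbatim to an arbitrary starting point $(s,x)\in E$, the same conclusion holds for $S(s,x,L)$.

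To promote one-dimensional uniqueness to uniqueness of the full measure I would invoke the standard conditioning argument (Theorem 6.2.3 of Stroock--Varadhan): given $\mathbf{P}\in S(s_{0},x_{0},L)$ and $r\in[s_{0},T]$, the regular conditional probability $\mathbf{P}(\,\cdot\mid\mathcal{D}_{r})(\omega)$ is, for $\mathbf{P}$-a.e.\ $\omega$, a solution of the $(r,X_{r}(\omega),L)$-martingale problem, and an induction on the number of time points then reduces equality of $(n+1)$-dimensional marginals under $\mathbf{P}_{1}$ and $\mathbf{P}_{2}$ to equality of one-dimensional marginals at each $(r,x)$, which was established above. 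Thus $\mathbf{P}_{1}=\mathbf{P}_{2}$ on the $\pi$-system of finite-dimensional cylinders, and hence on $\mathcal{D}$. The only delicate step is Step~2: justifying the passage from the mollified It\^o identity to the exact one, since $u$ only lies in $\mathcal{H}_{p}^{\alpha}(E)$ and the commutator $[L,\zeta_{\delta}\ast]$ is nontrivial; this is precisely where the H\"older modulus $w$ and the bound $p>d/\beta$ are used, exactly as in Lemma \ref{lemn2}.
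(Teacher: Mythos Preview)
Your approach is essentially the paper's: solve the backward equation $\partial_t u+Lu=f$, $u(T)=0$ via Theorem~\ref{main3}, mollify, apply It\^o, pass to the limit to obtain $-u(s_0,x_0)=\mathbf{P}\int_{s_0}^T f(r,X_r)\,dr$, and then invoke the Stroock--Varadhan mechanism (which the paper cites as Lemma~2.4 in \cite{MiP923}).

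One point worth tightening: you justify the limit by re-running the $H_1,\ldots,H_4$ commutator decomposition from Lemma~\ref{lemn2}. That decomposition in the lemma is tailored to the $A$-commutator and to the auxiliary operator $\tilde L^\delta$ built there; it does not by itself control the $B$-commutator (the coefficients $b,\pi$ are merely measurable, so $[B,\zeta_\delta^p\ast]$ need not vanish via a modulus-of-continuity argument). The paper's route is cleaner and avoids this: it uses a standard mollifier $\varphi_\varepsilon$, notes that $u_\varepsilon\to u$ in $H_p^\alpha(E)$ hence $\partial_t u_\varepsilon+Lu_\varepsilon\to f$ in $L_p(E)$, and then applies Lemma~\ref{lemn2} itself---not its internal computations---as the bound $\bigl|\mathbf{P}\int_{s_0}^T g(r,X_r)\,dr\bigr|\le C|g|_p$ that lets one pass an $L_p$-limit inside the expectation. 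Corollary~\ref{cor7} handles the left-hand side. In short, once Lemma~\ref{lemn2} is available you should treat it as a black box $L_p\to\mathbf{R}$ continuity statement rather than re-enter its proof; your final Stroock--Varadhan step is exactly what the cited Lemma~2.4 of \cite{MiP923} packages.
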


\begin{proof}
Let $f\in C_{0}^{\infty }(E),p>\frac{d}{\beta }\vee \frac{2d}{\alpha }\vee 2$%
. By Theorem \ref{main3}, there is $u\in \mathcal{H}_{p}^{\alpha }(E)$
solving 
\begin{eqnarray*}
\partial _{t}u(t,x)+Lu(t,x) &=&f(t,x),(t,x)\in E, \\
u(T,x) &=&0,x\in \mathbf{R}^{d}.
\end{eqnarray*}%
Let $\varphi \in C_{0}^{\infty }(\mathbf{R}^{d}),\varphi \geq 0,\int \varphi
dx=1,\varphi _{\varepsilon }(x)=\varepsilon ^{-d}\varphi (x/\varepsilon
),x\in \mathbf{R}^{d},$ and%
\begin{equation*}
u_{\varepsilon }(t,x)=\int u(t,x-y)\varphi _{\varepsilon }(y)dy,(t,x)\in 
\mathbf{R}^{d}\text{.}
\end{equation*}%
Applying Ito formula, we have%
\begin{equation*}
-u_{\varepsilon }(s_{0},x_{0})=\mathbf{P}\int_{s_{0}}^{T}[\partial
_{t}u_{\varepsilon }(r,X_{r})+Lu_{\varepsilon }(r,X_{r})]dr.
\end{equation*}%
Using Lemma \ref{lemn2} and Corollary \ref{cor7} to pass yo the limit we
derive that%
\begin{equation*}
-u(s_{0},x_{0})=\mathbf{P}\int_{s_{0}}^{T}f(r,X_{r})dr
\end{equation*}%
and the uniqueness follows by Lemma 2.4 in \cite{MiP923}.
\end{proof}

Now we can construct a "local" solution of the martingale problem.

\begin{lemma}
\label{lemn3}Let Assumptions A, B(i)-(ii) hold, $\pi (t,x,d\upsilon )=\chi
_{\left\{ |x|\leq R\right\} }\pi (t,x,d\upsilon ),(t,x)\in E,$ for some $%
R>0. $

Then for each $(s,x)\in E$ there is a unique solution $\mathbf{P}_{s,x}\in
S(s,x,L)$ and $\mathbf{P}_{s,x}$ is weakly continuous in $(s,x)$.
\end{lemma}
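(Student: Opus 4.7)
The plan separates uniqueness, existence, and weak continuity.

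For uniqueness, I would first verify that the hypothesis of the lemma forces the full Assumption~\textbf{B}. Only \textbf{B}(iii) is not immediate, but the cutoff $\pi(t,x,d\upsilon)=\chi_{\{|x|\leq R\}}\pi(t,x,d\upsilon)$ restricts the spatial integration to $\{|x|\leq R\}$, while \textbf{B}(i) yields the uniform bound $\pi(t,x,\{|\upsilon|>\varepsilon\})\leq K/(\varepsilon^\alpha\wedge 1)$. Hence $\int_0^T\!\int \pi(t,x,\{|\upsilon|>\varepsilon\})\,dx\,dt<\infty$ and Corollary~\ref{cor8} gives $|S(s,x,L)|\leq 1$.

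For existence, I would mollify the coefficients in the spatial variable. Let $m_n,b_n,\pi_n$ be obtained by convolving $m(t,\cdot,y)$, $b(t,\cdot)$, and $\pi(t,\cdot,d\upsilon)$ with a standard mollifier $\zeta_n$, adding if necessary a harmless $n^{-1}m_0$ to preserve $m_n\geq m_0$, and enlarging the cutoff radius to $R+1$. The smoothed coefficients satisfy Assumption~\textbf{A} with the same structural constants $K,\eta,w$ and are now smooth in $x$, so Theorem~5 in \cite{MiP923} (or a direct SDE construction driven by an $\alpha$-stable noise) produces a unique $\mathbf{P}^n_{s,x}\in S(s,x,L^n)$, with $L^n=A^{m_n}+B^{\pi_n,b_n}$. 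To extract a weak limit I would establish tightness of $\{\mathbf{P}^n_{s,x}\}$ in the Skorokhod space $D([0,T];\mathbf{R}^d)$ via Aldous' criterion: for $v\in C_0^\infty(\mathbf{R}^d)$ the martingale decomposition $v(X_t)-v(X_s)=\int_s^tL^nv(r,X_r)\,dr+M^n_t(v)$, together with the uniform bound $\sup_{n,t,x}|L^nv(t,x)|<\infty$ (which holds because $v$ is smooth and the large-jump intensity is controlled uniformly by the cutoff), supplies the required equicontinuity of the compensators and the quadratic variation.

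The main obstacle is identifying a subsequential weak limit $\mathbf{P}_{s,x}$ as an element of $S(s,x,L)$. For each $v\in C_0^\infty$ I must pass to the limit in
\[
\mathbf{P}^{n_k}\int_s^t L^{n_k}v(r,X_r)\,dr\ \longrightarrow\ \mathbf{P}_{s,x}\int_s^t Lv(r,X_r)\,dr .
\]
Now $L^{n_k}v\to Lv$ in $L_p(E)$ for $p>(d/\beta)\vee(2d/\alpha)\vee 2$ (by standard mollification, using \textbf{A}(ii) for the $A$-part and the uniform tail control from the cutoff for the $B$-part). The $L_p$-estimate of Lemma~\ref{lemn2}, whose constant depends only on $R,T,K,\eta,\beta,w,p$ and is therefore stable under mollification, allows me to replace $\mathbf{P}^{n_k}\int_s^t(L^{n_k}v-Lv)(r,X_r)\,dr$ by a quantity going to $0$, while weak convergence handles the remaining $\mathbf{P}^{n_k}\int_s^t Lv(r,X_r)\,dr$ since $Lv\in C_b$ up to a small-support correction that is again controlled by Lemma~\ref{lemn2}. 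This yields the $(\mathbb{D},\mathbf{P}_{s,x})$-martingale property of $M_t(v)=v(X_t)-\int_s^tLv(r,X_r)\,dr$, hence $\mathbf{P}_{s,x}\in S(s,x,L)$.

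Weak continuity in $(s,x)$ is then abstract: the structural coefficient bounds being independent of the initial data, the family $\{\mathbf{P}_{s_k,x_k}\}$ is tight whenever $(s_k,x_k)\to(s,x)$ by the same argument as above; every subsequential weak limit solves the $(s,x,L)$-martingale problem and must coincide with the unique $\mathbf{P}_{s,x}$ by the already-established uniqueness, so the whole sequence converges weakly.
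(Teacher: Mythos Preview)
Your plan is essentially the paper's: mollify the lower-order coefficients, obtain solutions $\mathbf{P}^n_{s,x}$ for the approximating operators, extract a tight subsequence, identify the limit using the uniform $L_p$-estimate of Lemma~\ref{lemn2}, and deduce uniqueness from Corollary~\ref{cor8} and weak continuity from uniqueness plus tightness. The paper mollifies only $\pi$ (not $m,b$), cites Theorem~IX.2.31 of \cite{jaks} for existence at level $n$, uses Theorem~VI.4.18 of \cite{jaks} together with Lemma~\ref{lemn2} for tightness (bounding $\mathbf{P}^n\!\int \pi_{\varepsilon_n}(r,X_r,\{|\upsilon|>l\})\,dr$ by $C\,|\pi(\cdot,\cdot,\{|\upsilon|>l\})|_{p}$, which vanishes as $l\to\infty$ thanks to the spatial cutoff), and packages the limit passage into Lemma~3.7 of \cite{MiP923}; your Aldous-based tightness and explicit limit argument are legitimate variants of the same scheme.

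One point needs tightening. In passing $\mathbf{P}^{n_k}\!\int_s^t Lv(r,X_r)\,dr$ to $\mathbf{P}_{s,x}\!\int_s^t Lv(r,X_r)\,dr$, the function $Lv$ is only measurable in $x$ (through $b,\pi$), so weak convergence does not apply directly; and controlling the $C_b$-approximation error on the \emph{limit} side by Lemma~\ref{lemn2} is circular, since that lemma presupposes $\mathbf{P}_{s,x}\in S(s,x,L)$. The remedy is to first transfer the $L_p$-estimate itself to the weak limit: for nonnegative $f\in C_0^\infty(E)$ the functional $w\mapsto\int_s^T f(r,X_r(w))\,dr$ is $\mathbf{P}_{s,x}$-a.s.\ continuous on $D$, so $\mathbf{P}_{s,x}\!\int f(r,X_r)\,dr\leq C|f|_p$ survives the limit, and then your approximation of $Lv$ in $L_p$ goes through on both sides. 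This is precisely what the paper's cited Lemma~3.7 in \cite{MiP923} encapsulates.
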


\begin{proof}
Let $\varphi \in C_{0}^{\infty }(\mathbf{R}^{d}),\varphi \geq 0,\int \varphi
dx=1,\varphi _{\varepsilon }(x)=\varepsilon ^{-d}\varphi (x/\varepsilon
),x\in \mathbf{R}^{d},$ and%
\begin{equation*}
\pi _{\varepsilon }(t,x,d\upsilon )=\int \pi (t,x-z,d\upsilon )\varphi
_{\varepsilon }(z)dz,(t,x)\in E.
\end{equation*}%
Let $\varepsilon _{n}\rightarrow 0$ and let $L^{n}$ be an operator defined
as $L$ with $\pi $ replaced by $\pi _{\varepsilon _{n}}$. It follows by
Theorem IX.2.31 in \cite{jaks} that the set $\mathcal{S}(s,x,L^{n})\neq
\emptyset .$ Since by Lemma \ref{lemn2}, for $\mathbf{P}_{s,x}^{n}\in
S(s,x,L^{n})$, 
\begin{eqnarray*}
&&\mathbf{P}_{s,x}^{n}\int_{s}^{T}\pi _{\varepsilon _{n}}(r,X_{r},\left\{
|\upsilon |>l\right\} )dr \\
&\leq &C\int_{s}^{T}\int \pi (r,x,\left\{ |\upsilon |>l\right\}
)dxdr\rightarrow 0\text{ as }l\rightarrow \infty \text{,}
\end{eqnarray*}%
the sequence $\left\{ \mathbf{P}_{s,x}^{n}\right\} $ is tight (see Theorem
VI.4.18 in \cite{jaks})$.$ Obviously, for each $v\in C_{0}^{\infty }(\mathbf{%
R}^{d})$, $L^{n}v(t,x)\rightarrow Lv(t,x)$ $dtdx$-a.e. Therefore, by Lemma
3.7 in \cite{MiP923} the set $S(s,x,L)\neq \emptyset .$ By Lemma \ref{lemn2}%
, the solution $\mathbf{P}_{s,x}\in S(s,x,L)$ is unique. Applying Lemma 3.7
in \cite{MiP923} again, we see that $\mathbf{P}_{s,x}$ is continuous in $%
(s,x)$.
\end{proof}

\begin{corollary}
\label{cor9}Let Assumptions A, B(i)-(ii) hold. Then for each $(s,x)\in E$,
there is at most one solution $\mathbf{P}_{s,x}\in S(s,x,L)$.
\end{corollary}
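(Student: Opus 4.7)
My plan is to reduce the uniqueness for $L$ to the uniqueness for a family of truncations, each of which has $\pi$ compactly supported in $x$ and hence is covered by Lemma \ref{lemn3}. Suppose $\mathbf{P}_{1},\mathbf{P}_{2}\in S(s,x,L)$. For $R>|x|$ I introduce the stopping time $\tau_{R}=\inf\{t\geq s:|X_{t}|\geq R\}\wedge T$ and the truncated operator $L^{R}=A+B^{R}$, where $B^{R}$ is defined as $B$ but with $\pi$ replaced by $\pi^{R}(t,z,d\upsilon)=\chi_{\{|z|\leq R\}}\pi(t,z,d\upsilon)$. Then $L^{R}$ satisfies Assumption $\mathbf{A}$, Assumption $\mathbf{B}$(i)--(ii), plus the compact-support hypothesis of Lemma \ref{lemn3}, which therefore produces a strong Markov family of unique solutions $\mathbf{Q}^{R}_{t,y}\in S(t,y,L^{R})$. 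Since paths in $D([0,T],\mathbf{R}^{d})$ are cadlag hence bounded on $[0,T]$, $\tau_{R}\uparrow T$ pathwise as $R\to\infty$, so $\bigcup_{R}\mathcal{D}_{\tau_{R}}$ generates $\mathcal{D}$, and it is enough to prove $\mathbf{P}_{1}|_{\mathcal{D}_{\tau_{R}}}=\mathbf{P}_{2}|_{\mathcal{D}_{\tau_{R}}}$ for every $R$.

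The core step is a concatenation argument in the spirit of Stroock--Varadhan (Theorem 6.1.2 of \cite{st}). Given any $\mathbf{P}\in S(s,x,L)$, I would paste to obtain a new measure $\tilde{\mathbf{P}}$ that runs the path according to $\mathbf{P}$ up to $\tau_{R}$ and then continues by the regular conditional probability $\mathbf{Q}^{R}_{\tau_{R},X_{\tau_{R}}}$ supplied by the strong Markov family of Lemma \ref{lemn3}. The verification that $\tilde{\mathbf{P}}\in S(s,x,L^{R})$ rests on the observation that for Lebesgue-a.e.\ $r<\tau_{R}$ one has $|X_{r}|<R$, so $Lv(r,X_{r})=L^{R}v(r,X_{r})$ for $v\in C_{0}^{\infty}(\mathbf{R}^{d})$; the stopped $L$-martingale thus coincides with the stopped $L^{R}$-martingale, and after $\tau_{R}$ the $L^{R}$-martingale property is provided by the pasted strong Markov piece. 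By the uniqueness asserted in Lemma \ref{lemn3}, $\tilde{\mathbf{P}}=\mathbf{Q}^{R}_{s,x}$ independently of whether we started from $\mathbf{P}_{1}$ or $\mathbf{P}_{2}$, giving $\mathbf{P}_{1}|_{\mathcal{D}_{\tau_{R}}}=\mathbf{Q}^{R}_{s,x}|_{\mathcal{D}_{\tau_{R}}}=\mathbf{P}_{2}|_{\mathcal{D}_{\tau_{R}}}$, and the result follows on letting $R\to\infty$.

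The main obstacle is the bookkeeping in the pasting step, in particular checking the martingale property across the (random) stopping time $\tau_{R}$ at which the process may jump out of the ball $B_{R}$. One has to justify that for a.e.\ $r<\tau_{R}$ one has $|X_{r}|<R$ (which holds outside the at most countable set of jump times of the cadlag path, using $\tau_{R}=\inf\{t:|X_{t}|\geq R\}$), that $(\tau_{R},X_{\tau_{R}})$ is measurable so the family $\mathbf{Q}^{R}_{\tau_{R}(w),X_{\tau_{R}(w)}(w)}$ provides a genuine regular conditional probability kernel, and that the resulting pasted measure inherits the martingale property globally---all standard but requiring careful appeal to the concatenation result for cadlag martingale problems.
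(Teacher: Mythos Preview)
Your approach is essentially the same as the paper's: the paper's proof consists of the single sentence ``immediate consequence of Lemma~\ref{lemn3} and Theorem~1.6(b) in \cite{MiP923}'', and that cited theorem is precisely the localization principle you have sketched (local uniqueness for the truncated problems $L^{R}$ plus $\tau_{R}\uparrow T$ yields global uniqueness via concatenation). The only discrepancy is bibliographic---your reference to ``Theorem~6.1.2 of \cite{st}'' points to Stroock's 1975 paper rather than the Stroock--Varadhan book, and the paper itself packages the concatenation step as Theorem~1.6(b) of \cite{MiP923}---but the mathematics is the same.
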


\begin{proof}
The statement is immediate consequence of Lemma \ref{lemn3} and Theorem
1.6(b) in \cite{MiP923}$.$
\end{proof}

\subsection{Proof of Proposition \protect\ref{prop3}}

The uniqueness follows by Corollary \ref{cor9}. In the first part of the
proof we assume that (\ref{mart3}) holds and use weak convergence arguments.
In the second part, we cover the general case by putting together measurable
families of probability measures.

(i) Assume (\ref{mart3}) holds. Let $L^{n}$ be an operator defined as $L$
with $\pi $ replaced by $\chi _{\left\{ |x|\leq n\right\} }\pi $. According
to Lemma \ref{lemn3}, for each $(s,x)\in E$ there is a unique and $\mathbf{P}%
_{s,x}^{n}\in S(s,x,L^{n})$ and $\mathbf{P}_{s,x}^{n}$ is weakly continuous
in $(s,x)$. By Theorem VI.4.18 in \cite{jaks}, $\left\{ \mathbf{P}%
_{s,x}^{n}\right\} $ is tight $.$Since $L^{n}v\rightarrow Lv$ $dxdt$-a.e.
and $L^{n}v$ is uniformly bounded for any $v\in C_{0}^{\infty }(\mathbf{R}%
^{d})$, by Lemma 3.7 in \cite{MiP923}$,\,\ $the sequence $\mathbf{P}%
_{s,x}^{n}\rightarrow \mathbf{P}_{s,x}\in S(s,x,L)$ weakly $(\mathbf{P}%
_{s,x} $ is unique by Corollary \ref{cor9}). The same Lemma 3.7, \cite%
{MiP923}, implies that $\mathbf{P}_{s,x}$ is weakly continuous in $(s,x)$.

(ii) In the general case (without assuming (\ref{mart3})), we split the
operator $Lu=\tilde{L}u+\tilde{B}u$, where $\tilde{L}$ is defined as $L$
with $\pi (t,x,d\upsilon )$ replaced by $\chi _{\left\{ |\upsilon
|<1\right\} }\pi (t,x,d\upsilon )$, and%
\begin{equation*}
\tilde{B}_{t,x}u(x)=\int_{|\upsilon |\geq 1}[u(x+\upsilon )-u(x)]\pi
(t,x,d\upsilon ),(t,x)\in E,u\in C_{0}^{\infty }(\mathbf{R}^{d}).
\end{equation*}%
Let $\left( \Omega _{2},\mathcal{F}_{2},\mathbf{P}_{2}\right) $ be a
probability space with a Poisson point measure $\tilde{p}(dt,dz)$ on $%
[0,\infty )\times (\mathbf{R\backslash \{}0\mathbf{\})}$ with 
\begin{equation*}
\mathbf{E}\tilde{p}(dt,dz)=\frac{dzdt}{|z|^{2}}.
\end{equation*}%
According to Lemma 14.50 in \cite{Jac79}, there is a measurable $\mathbf{R}%
^{d}\cap \left\{ |\upsilon |\geq 1\right\} $-valued function $c(t,x,z)$ such
that for any Borel $\Gamma $%
\begin{equation*}
\int_{\Gamma }\chi _{\left\{ |\upsilon |\geq 1\right\} }\pi (t,x,d\upsilon
)=\int \chi _{\Gamma }(c(t,x,z))\frac{dz}{z^{2}},(t,x)\in E.
\end{equation*}%
Consider the probability space%
\begin{equation*}
\left( \Omega ,\mathcal{F},\mathbf{P}_{s,x}^{\prime }\right) =\left( \Omega
_{2}\times D,\mathcal{F}_{2}\otimes \mathcal{D},\mathbf{P}_{2}\otimes 
\mathbf{P}_{s,x}\right) .
\end{equation*}%
Let%
\begin{eqnarray*}
H_{t} &=&\int_{s\wedge t}^{t}\int c(r,X_{r-,}z)\tilde{p}(dr,dz),s\leq t\leq
T, \\
\tau &=&\inf (t>s:\Delta H_{t}=H_{t}-H_{t-}\neq 0)\wedge T, \\
K_{t} &=&\chi _{\{\tau \leq t\}}, \\
Y_{t} &=&X_{t\wedge \tau }+H_{t\wedge \tau },0\leq t\leq T.
\end{eqnarray*}%
Note that $\tau =\inf (t>s:\Delta H_{t}\neq 0)\wedge T=\tau =\inf
(t>s:|\Delta H_{t}|\geq 1)\wedge T$. Let $\hat{D}=D([0,T],\mathbf{R}%
^{d}\times \lbrack 0,\infty ))$ be the Skorokhod space of cadlag $\mathbf{R}%
^{d}\times \lbrack 0,\infty )$-valued trajectories and let $%
Z_{t}=Z_{t}(w)=(y_{t}(w),k_{t}(w))=w_{t}\in R^{d}\times \lbrack 0,\infty
),w\in \hat{D}$ be the canonical process on it. Let%
\begin{equation*}
\mathcal{\hat{D}}_{t}=\sigma (Z_{s},s\leq t),\mathcal{\hat{D}}=\vee _{t}%
\mathcal{\hat{D}}_{t},\mathbb{\hat{D}=}\left( \mathcal{\hat{D}}_{t+}\right)
,t\in \lbrack 0,T].
\end{equation*}%
Denote $\mathbf{\hat{P}}_{s,x}^{1}$ the measure on $\hat{D}$ induced by $%
(Y_{t},K_{t}),0\leq t\leq T.$ Let 
\begin{eqnarray*}
\tau _{1} &=&\inf \left( t>s:\Delta k_{t}\geq 1\right) \wedge T,\ldots , \\
\tau _{n+1} &=&\inf \left( t>\tau _{n}:\Delta k_{t}\geq 1\right) \wedge T, \\
\mathcal{\hat{D}}_{\tau _{n}} &=&\sigma (Z_{t\wedge \tau _{n}},0\leq t\leq
T),n\geq 1.
\end{eqnarray*}%
Then $\left( \mathbf{\hat{P}}_{s,x}^{1}\right) $ is a measurable family of
measures on $\left( \hat{D},\mathcal{\hat{D}}\right) $ and for each $v\in
C_{0}^{\infty }(\mathbf{R}^{d}),$%
\begin{equation}
\hat{M}_{t\wedge \tau _{n}}(v)=v(y_{t\wedge \tau _{n}})-\int_{s}^{t\wedge
\tau _{n}}Lv(r,y_{r})dr,s\leq t\leq T,  \label{form5}
\end{equation}%
is $\left( \mathbf{\hat{P}}_{s,x}^{1},\mathbb{\hat{D}}\right) $-martingale
with $n=1$. Let us introduce the mappings%
\begin{equation*}
\mathcal{J}_{\tau _{1}}(w,w^{\prime })_{t}=\left\{ 
\begin{array}{cc}
w_{t} & \text{if }t<\tau _{1}(w), \\ 
w_{t}^{\prime } & \text{if }t\geq \tau _{1}(w),%
\end{array}%
\right.
\end{equation*}%
and let%
\begin{equation*}
Q(dw,dw^{\prime })=\mathbf{\hat{P}}_{\tau _{1}(w),X_{\tau
_{1}(w)}(w)}^{1}(dw^{\prime })\mathbf{\hat{P}}_{s,x}^{1}(dw).
\end{equation*}%
Then $\mathbf{P}_{s,x}^{2}=\mathcal{J}_{\tau _{1}}(Q)$, the image of $Q$
under $\mathcal{J}_{\tau _{1}},$ is a measurable family of measures on $\hat{%
D}$, and by Lemma 2.3 in \cite{MiP923}, $\hat{M}_{t\wedge \tau _{2}}$ is $%
\left( \mathbf{\hat{P}}_{s,x}^{2},\mathbb{\hat{D}}\right) $-martingale and $%
\mathbf{\hat{P}}_{s,x}^{2}|_{\mathcal{\hat{D}}_{\tau _{1}}}=\mathbf{\hat{P}}%
_{s,x}^{1}|_{\mathcal{\hat{D}}_{\tau _{1}}}$. Continuing and using Lemma 2.3
in \cite{MiP923}, we construct a sequence of measures $\mathbf{\hat{P}}%
_{s,x}^{n}$ such that 
\begin{equation*}
\mathbf{\hat{P}}_{s,x}^{n+1}|_{\mathcal{\hat{D}}_{\tau _{n}}}=\mathbf{\hat{P}%
}_{s,x}^{n}|_{\mathcal{\hat{D}}_{\tau _{n}}}
\end{equation*}%
and $\hat{M}_{t\wedge \tau _{n}}$ is $\left( \mathbf{\hat{P}}_{s,x}^{n},%
\mathbb{\hat{D}}\right) $-martingale. Since%
\begin{eqnarray*}
\mathbf{\hat{P}}_{s,x}^{n}\left( \tau _{n}<T\right) &=&\mathbf{\hat{P}}%
_{s,x}^{n}\left( k_{T\wedge \tau _{n}}\geq n\right) \leq
n^{-1}\int_{s}^{T\wedge \tau _{n}}\pi (r,y_{r},\left\{ |\upsilon |\geq
1\right\} )dr \\
&\leq &n^{-1}KT\rightarrow 0\text{ as }n\rightarrow \infty ,
\end{eqnarray*}%
there is a measurable family $\left( \mathbf{\hat{P}}_{s,x}\right) $ on $%
\hat{D}$ such that%
\begin{equation*}
\mathbf{\hat{P}}_{s,x}|_{\mathcal{\hat{D}}_{\tau _{n}}}=\mathbf{\hat{P}}%
_{s,x}^{n}|_{\mathcal{\hat{D}}_{\tau _{n}}},n\geq 1,
\end{equation*}%
and $\hat{M}_{t\wedge \tau _{n}}$ is $\left( \mathbf{\hat{P}}_{s,x},\mathbb{%
\hat{D}}\right) $-martingale for every $n$. Obviously, $y_{\cdot }$ under $%
\mathbf{\hat{P}}_{s,x}$ gives a measurable family $\mathbf{P}_{s,x}\in
S(s,x,L)$. The strong Markov property is a consequence of Lemma 2.2 in \cite%
{MiP923}. The statement of Proposition \ref{prop3} follows.


\begin{thebibliography}{99}
\bibitem{AbK09} Abels, H. and Kassman, M., The Cauchy problem and the
martingale problem for integro-differential operators with non-smooth
kernels, Osaka J. Math. 46 (2009) 661-683.

\bibitem{cav1} Caffarelli, L., Vasseur, A., Drift diffusion equations with
fractional diffusion and the quasigeostrophic equation, Annals of Math.,
Vol. 171, No. 3, 2010, 1903-1930.

\bibitem{GiT83} Gilbarg, D. and Trudinger, N. S. \emph{Elliptic Partial
Differential Equations of Second Order}. Springer, New York, 1983.

\bibitem{KimDong} Dong H. and Kim D., On Lp estimates of non-local elliptic
equations, arXiv:1102.4073v1 [math.AP], 2011.

\bibitem{Jac79} Jacod, J., \textit{Calcul Stochastique et Probl\`{e}mes de
Martingales}, \textit{Lecture Notes in Mathematics}, 714, Springer Verlag,
Berlin New York, 1979.

\bibitem{jaks} Jacod, J. and Shiryaev, A.N., Limit Theorems for Stochastic
Processes, Springer, 1987.

\bibitem{Kom84} Komatsu, T., On the Martingale Problem for Generators of
Stable Processes with Perturbations, \textit{Osaka J. of Math.} 22-1 (1984)
113-132.

\bibitem{kry1} Krylov, N.V., An analytic approach to SPDEs, In: \textit{%
Stochastic Partial Differential Equations: Six Perspectives}, AMS, 1999.

\bibitem{kry} Krylov, N.V., \textit{Lectures on Elliptic and Parabolic
Equations in Sobolev Spaces}, AMS, 2008.

\bibitem{MiP922} Mikulevi\v{c}ius, R. and Pragarauskas, H., On the Cauchy
Problem for Certain Integro-Differential Operators in Sobolev and H\"{o}lder
Spaces, \textit{Lithuanian Mathematical Journal} 32-2 (1992) 238-264.

\bibitem{MiP923} Mikulevi\v{c}ius, R. and Pragarauskas, H., On the
Martingale Problem Associated with Nondegenerate L\'{e}vy Operators, \textit{%
Lithuanian Mathematical Journal} 32-3 (1992) 297-311.

\bibitem{mikprag} Mikulevi\v{c}ius, R. and Pragarauskas, H., On $L_{p}$-
theory for Zakai equation with discontinuous observation process,
arXiv:1012.5816v1 [math.PR], 2010.

\bibitem{mikprag1} Mikulevi\v{c}ius, R. and Pragarauskas, H., On the Cauchy
problem for integro-differential operators in H\"{o}lder classes and the
uniqueness of the martingale problem, arXiv: 1103.3492v2 [math.AP], 2011.

\bibitem{Tri92} Triebel, H., \textit{Theory of Function Spaces II}.
Birkhaueser Verlag, 1992.

\bibitem{stein} \ Stein E. M., \textit{Singular Integrals and
Differentiability Properties of Functions}, Princeton University Press,
Princeton, 1970.

\bibitem{st} Stroock, D.W., Diffusion processes associated with Levy
generators, Z. Wahrsch. Verw. Gebiete, 32 (1975), 209-244.
\end{thebibliography}
\end{document}